\documentclass{amsart}
\usepackage{amsmath,amsthm,amscd,amssymb,amsfonts, amsbsy}
\usepackage{latexsym}
\usepackage{txfonts, mathrsfs}
\usepackage{color}
\usepackage{enumerate}

\numberwithin{equation}{section}

\theoremstyle{plain}
\newtheorem{theorem}[equation]{Theorem}
\newtheorem{lemma}[equation]{Lemma}
\newtheorem{corollary}[equation]{Corollary}

\theoremstyle{definition}

\newtheorem{example}[equation]{Example}

\newtheorem*{CLB}{Condition (LB)}
\newtheorem*{CIH}{Condition (IH)}
\newtheorem*{CLH}{Condition (LH)}
\newtheorem*{CEM}{Condition (S)}

\theoremstyle{remark}
\newtheorem{remark}[equation]{Remark}

\newtheorem*{case1}{Case 1: $\abs{x-y} \leq \sqrt{t-s}<R_{max}$}
\newtheorem*{case2}{Case 2: $\sqrt{t-s}< \abs{x-y} \wedge R_{max}$}
\newtheorem*{case3}{Case 3: $R_{max} \leq \sqrt{t-s}$}
\newtheorem*{casei}{Case 1: $\abs{x-y} \leq d_x\wedge d_y$}
\newtheorem*{caseii}{Case 2: $d_x\wedge d_y < \abs{x-y} <d_x\vee d_y$}
\newtheorem*{caseiii}{Case 3: $d_x\vee d_y \leq \abs{x-y}$}
\newtheorem*{parta}{Part (a)}
\newtheorem*{partb}{Part (b)}

\newcommand{\supp}{\operatorname{supp}}
\newcommand{\dist}{\operatorname{dist}}
\newcommand{\diam}{\operatorname{diam}}

\newcommand{\osc}{\operatorname*{osc}}
\newcommand{\esssup}{\operatorname*{ess\,sup}}

\newcommand{\bR}{\mathbb R}

\newcommand\cA{\mathcal{A}}

\newcommand\cG{\mathcal{G}}

\newcommand\cL{\mathscr{L}}

\newcommand\cS{\mathcal{S}}
\newcommand\cU{\mathcal{U}}
\newcommand\cV{\mathscr{V}}
\newcommand\cW{\mathscr{W}}

\newcommand\rV{\mathring{\cV}}

\newcommand\cLt{{}^t\!\mathscr{L}}
\newcommand\Lt{{}^t\!L}

\newcommand{\ip}[1]{\left\langle#1\right\rangle}

\providecommand{\set}[1]{\{#1\}}
\providecommand{\Set}[1]{\left\{#1\right\}}
\providecommand{\bigset}[1]{\bigl\{#1\bigr\}}

\providecommand{\abs}[1]{\lvert#1\rvert}

\providecommand{\bigabs}[1]{\bigl\lvert#1\bigr\rvert}
\providecommand{\Bigabs}[1]{\Bigl\lvert#1\Bigr\rvert}

\providecommand{\norm}[1]{\lVert#1\rVert}

\providecommand{\bignorm}[1]{\bigl\lVert#1\bigr\rVert}

\providecommand{\tri}[1]{\lvert\lVert#1\rvert\rVert}

\renewcommand{\epsilon}{\varepsilon}
\renewcommand{\vec}[1]{\boldsymbol{#1}}
\renewcommand{\qedsymbol}{$\blacksquare$}

\newcommand{\LB}{\mathrm{(LB)}}
\newcommand{\IH}{\mathrm{(IH)}}
\newcommand{\PH}{\mathrm{(PH)}}
\newcommand{\LH}{\mathrm{(LH)}}
\newcommand{\LHP}{\mathrm{(LH{}^\prime)}}
\newcommand{\CS}{\mathrm{(S)}}
\newcommand{\VMO}{\mathrm{VMO}}

\begin{document}
\title[Green's matrix for parabolic systems]{Global estimates for Green's matrix of second order parabolic systems with application to elliptic systems in two dimensional domains}
\author[S. Cho ]{Sungwon Cho }
\address[S. Cho]{Department of Mathematics, Yonsei University, Seoul 120-749, Republic of Korea}
\email{harnack@yonsei.ac.kr}

\author[H. Dong]{Hongjie Dong}
\address[H. Dong]{Division of Applied Mathematics, Brown University, 182 George Street, Box F, Providence, RI 02912, USA}
\email{Hongjie\_Dong@brown.edu}

\author[S. Kim]{Seick Kim}
\address[S. Kim]{Department of Mathematics, Yonsei University, Seoul 120-749, Republic of Korea}
\curraddr{Department of Computational Science and Engineering, Yonsei University, Seoul 120-749, Republic of Korea}
\email{kimseick@yonsei.ac.kr}

\subjclass[2000]{Primary 35A08, 35K40; Secondary 35B45}
\keywords{Green function, Green's matrix, global bounds, second order parabolic system, Gaussian estimate.}

\begin{abstract}
We establish global Gaussian estimates for the Green's matrix of divergence form, second order parabolic systems in a cylindrical domain under the assumption that weak solutions of the system vanishing on a portion of the boundary satisfy a certain local boundedness estimate and a local H\"older estimate.
From these estimates, we also derive global estimates for the Green's matrix for elliptic systems with bounded measurable coefficients in two dimensional domains.
We present a unified approach valid for both the scalar and vectorial cases and discuss several applications of our result.
\end{abstract}
\maketitle

\section{Introduction}

Fundamental solutions of parabolic equations in divergence form with bounded measurable coefficients have been a subject of research for many years.
The first significant step in this direction was made in 1958 by Nash \cite{Nash}, who established certain estimates of the fundamental solutions in proving local H\"older continuity of weak solutions.
In 1967, Aronson \cite{Aronson} proved two-sided Gaussian estimates for the fundamental solutions by using the parabolic Harnack inequality of Moser \cite{Moser}.
In 1986, Fabes and Stroock \cite{FS} showed that the idea of Nash could be used to establish Aronson's Gaussian bounds, which consequently gave a new proof of Moser's parabolic Harnack inequality.
There are many books and articles related to this subject;  see e.g., \cite{Davies89, PE, Robinson, VSC} and references therein.

Compared to a long history in the study of fundamental solutions for parabolic equations with real coefficients, there has been relatively little study on the fundamental matrices for parabolic systems until recently, except when the coefficients are sufficiently regular; see  e.g., Eidel'man \cite{Eidelman}.
In 1996, Auscher \cite{Auscher} gave a new proof of Aronson's Gaussian upper bound for the fundamental solution of parabolic equations with time independent coefficients, which carries over to the case of $L^\infty$-complex perturbations of real coefficients.
We note that a parabolic equation with complex coefficients is, in fact, a special case of a system of parabolic equations.
Since then, there has been active research in this direction; see e.g., \cite{AMcT, AQ, AT, AT2, CDK, HK04, Kim} for related results.

In this article, we study the Green's matrix for parabolic systems
\begin{equation}    \label{eq0.0}
\partial u^i/\partial t -\sum_{j=1}^m \sum_{\alpha,\beta=1}^ n D_\alpha \left(A^{\alpha\beta}_{ij}(t,x) D_\beta\, u^j\right),\quad D_\alpha = \partial/\partial {x_\alpha},\quad  i=1,\ldots,m,
\end{equation}
in a cylindrical domain $\cU=\bR \times \Omega$, where $\Omega $ is a (possibly unbounded) domain in $\bR^n$.
We assume that the coefficients are measurable functions defined in the whole space $\bR^{n+1}$ satisfying the strong parabolicity condition
\begin{equation}    \label{eqP-02}
\sum_{i,j=1}^m \sum_{\alpha,\beta=1}^n A^{\alpha\beta}_{ij}(t,x)\xi^j_\beta \xi^i_\alpha \geq \nu\sum_{i=1}^m \sum_{\alpha=1}^n \bigabs{\xi^i_\alpha}^2=:\nu \bigabs{\vec \xi}^2, \quad\forall \vec\xi \in \bR^{nm},\quad\forall (t,x)\in\bR^{n+1},
\end{equation}
and also the uniform boundedness condition
\begin{equation}    \label{eqP-03}
\sum_{i,j=1}^m \sum_{\alpha,\beta=1}^n \Bigabs{A^{\alpha\beta}_{ij}(t,x)}^2\le \nu^{-2},\quad\forall (t,x)\in\bR^{n+1},
\end{equation}
for some constant $\nu\in (0,1]$.
We point out that the coefficients are assumed to be neither time independent nor symmetric.
We will later impose some further assumptions on the operator but not explicitly on its coefficients.

We are interested in the following global Gaussian estimate for the Green's matrix $\vec \cG(t,x,s,y)$ of the parabolic system \eqref{eq0.0} in a cylindrical domain $\cU=\bR\times\Omega$:
For any $T>0$, there exists a positive constant $C$ such that for all $t,s\in \bR$ satisfying $0<t-s<T$ and $x,y \in \Omega$, we have
\begin{equation}        \label{eq0.1}
\abs{\vec \cG (t,x,s,y)} \leq \frac{C}{(t-s)^{n/2}}\exp\Set{-\frac{\kappa\abs{x-y}^2}{t-s}},
\end{equation}
where $\kappa$ is a positive constant independent of $T$.
We are also interested in the following global estimate for the Green's matrix $\vec \cG(t,x,s,y)$ of the parabolic system \eqref{eq0.0} in  $\cU=\bR\times\Omega$ when the base $\Omega$ is subject to a certain condition: For any $T>0$, there exists a positive constant $C$ such that for all $t,s\in \bR$ satisfying $0<t-s<T$ and $x,y \in \Omega$, we have
\begin{multline}							\label{eq0.2}
\abs{\vec \cG(t,x,s,y)} \leq C \left(1 \wedge \frac {d_x} {\sqrt {t-s} \vee \abs{x-y}} \right)^{\mu} \left(1 \wedge \frac{d_y} {\sqrt {t-s} \vee \abs{x-y}}\right)^{\mu}\\
\cdot (t-s)^{-n/2}\exp \set{-\kappa \abs{x-y}^2/(t-s)},
\end{multline}
where $\kappa>0$ and $\mu\in (0,1]$ are constants independent of $T$, and we used the notation $a\wedge b=\min(a,b)$, $a\vee b=\max(a,b)$, and $d_x=\dist(x,\partial\Omega)$.

The goal of this article is to present how one can derive the global estimate like \eqref{eq0.1} for the Green's matrix
using a local boundedness estimate for the weak solutions of the parabolic system vanishing on a portion of the boundary; see Condition $\LB$ below for the precise statement of the local boundedness estimate.
In fact, we show that such a local boundedness estimate is a necessary and sufficient condition for the Green's matrix of the system to have a global estimate like \eqref{eq0.1}.
We shall also show how to derive a global estimate like \eqref{eq0.2} for the Green's matrix
by using a local H\"older continuity estimate for the solution of the system vanishing on a portion of the boundary; see Condition $\LH$ below for the statement of the local H\"older estimate.
There is a standard method in constructing the Green's matrix for elliptic systems in a domain $\Omega$ out of the ``Dirichlet heat kernel'' of the elliptic system, namely by integrating it with respect to $t$-variable from zero to infinity.
By utilizing the above global estimates \eqref{eq0.1} and \eqref{eq0.2}, we obtain the following global estimate for Green's matrix $\vec G(x,y)$ for elliptic systems with bounded measurable coefficients in a two dimensional domain $\Omega$:
\begin{equation}								\label{eq0.3}
\abs{\vec G(x,y)} \leq C\left(1\wedge \frac{d_x}{\abs{x-y}}\right)^{\mu}\left(1\wedge \frac{d_y}{\abs{x-y}}\right)^{\mu} \left\{ 1+ \ln_+ \left(\frac{1}{\abs{x-y}}\right)\right\},
\end{equation}
where $C>0$ and $\mu\in(0,1]$ are constants depending on $\Omega$, and we used the notation $\ln_+t=\max(\ln t, 0)$.
We do not consider Green's matrix for elliptic systems in a three or higher dimensional domain in this article.
For treatment of such cases, we refer to a very recent article \cite{KK09}, where conditions similar to ours were introduced.
In fact, our conditions $\LB$ and $\LH$ are motivated by the corresponding elliptic conditions appeared in \cite{KK09}.
We point out that most of the results in \cite{KK09} can be also obtained by following the above mentioned ``Dirichlet heat kernel'' approach and utilizing the global estimates like \eqref{eq0.1} and \eqref{eq0.2}, albeit it would be far more complicated to do so.
The axiomatic approach adopted in this article is also in the spirit of \cite{HK04, HK07}.

The novelty of our work is in presenting a unifying method that establishes the global estimates \eqref{eq0.1} and \eqref{eq0.2} for the Green's function for the uniformly parabolic equations with bounded measurable coefficients as well as for the Green's matrix of the parabolic systems \eqref{eq0.0}, for instance, with coefficients uniformly continuous or VMO in $x$-variables and bounded measurable in $t$-variable, in a cylindrical domain $\bR\times\Omega$, where the base $\Omega$ is a $C^1$ domain or a Lipschitz domain with a sufficiently small Lipschitz constant.
Also, this article provides a unified approach in establishing the global estimate \eqref{eq0.3} for both Green's function for elliptic equations and Green's matrix for elliptic systems with bounded measurable coefficients in two dimensional domains.
In a recent article \cite{CDK}, we proved the existence of Green's matrix for the parabolic system \eqref{eq0.0} in an arbitrary cylindrical domain $\cU=\bR\times\Omega$ under the assumption that weak solutions of the systems satisfy an interior H\"older continuity estimate.
We also derived various local estimates for the Green's matrix under the same assumption but, however, the global Gaussian estimate like \eqref{eq0.1} was proved only in the case when $\Omega=\bR^n$.
In another recent article \cite{DK09}, the second and third named authors proved the existence and local estimates of Green's matrix for elliptic systems with bounded measurable coefficients in a two dimensional domain $\Omega$ with finite area or width, by utilizing the estimates established in \cite{CDK}.
However, again, the global estimate similar to \eqref{eq0.3} was established only when $\Omega$ is the open set above a Lipschitz graph $\varphi:\bR\to\bR$.
In this sense, the present article may be considered as a sequel  of both articles \cite{CDK} and \cite{DK09} with a considerable improvement.

The organization of the paper is as follows. In Section~\ref{sec:nd}, we introduce some notation and definitions including our definition of the Green's matrix of the parabolic system \eqref{eq0.0} in $\cU=\bR\times\Omega$.
In Section~\ref{main}, we give precise statement of the conditions $\LB$ and $\LH$ and state our main theorems.  In Section~\ref{sec:app}, we present some applications of our main results.
The proofs of our main results are given in Section~\ref{sec:p} and several technical lemmas are proved in Appendix.
Section~\ref{sec:2d} is devoted to the discussion of global estimates for Green's matrices for elliptic systems with bounded measurable coefficients in two dimensional domains and Section~\ref{sec:h} is allocated to a brief discussion regarding the global estimates for systems with H\"older or Dini continuous coefficients.

Finally, a few remarks are in order.
Green's functions for uniformly elliptic equations with bounded measurable coefficients were extensively studied in classical papers \cite{GW, LSW}.
Green's matrices for elliptic systems were investigated earlier in \cite{DM, Fuchs84, Fuchs86}.
In \cite{AT},  Auscher and Tchamitchian introduced the ``Dirichlet property (D)'' in connection with the Gaussian estimates for the heat kernel of the elliptic operator with complex coefficients, which is related to the condition $\LH$ of this article; see Remark~\ref{rmk:P-03}.

\section{Notation and Definitions}					\label{sec:nd}

We mainly follow the notation and definitions of \cite{CDK}, most of which were in turn chosen to be compatible with those used in \cite{LSU}.
\subsection{Basic notation}

Let $\cL$ be a parabolic operator acting on column vector valued functions $\vec u=(u^1,\ldots,u^m)^T$ defined on a domain in $\bR^{n+1}$ in the following way:
\begin{equation}						\label{eq2.01aa}
\cL\vec u = \vec u_t-D_\alpha \bigl(\vec A^{\alpha\beta}\, D_\beta \vec u\bigr),
\end{equation}
where we use the usual summation convention over repeated indices $\alpha,\beta=1,\ldots, n$, and $\vec A^{\alpha\beta}=\vec A^{\alpha\beta}(t,x)$ are $m\times m$ matrix valued functions on $\bR^{n+1}$ with entries $A^{\alpha\beta}_{ij}$ that satisfy the conditions \eqref{eqP-02} and \eqref{eqP-03}.
Notice that the $i$-th component of the column vector $\cL \vec u$ is presented in \eqref{eq0.0}.
The adjoint operator $\cLt$ is defined by
\[\cLt \vec u = -\vec u_t-D_\alpha \bigl({}^t\!\vec A^{\alpha\beta} D_\beta \vec u\bigr),\]
where ${}^t\!\vec A^{\alpha\beta}=\bigl(\vec A^{\beta\alpha}\bigr)^T$; i.e., ${}^t\!A^{\alpha\beta}_{ij}=A^{\beta\alpha}_{ji}$.
Notice that the coefficients ${}^t\!A^{\alpha\beta}_{ij}$ satisfy the conditions \eqref{eqP-02} and \eqref{eqP-03} with the same constant $\nu$.

We use $X=(t,x)$ to denote a point in $\bR^{n+1}$; $x=(x_1,\ldots, x_n)$ will always be a point in $\bR^n$.
We also write $Y=(s,y)$, $X_0=(t_0,x_0)$, etc.
We define the parabolic distance between the points $X=(t,x)$ and $Y=(s,y)$ in $\bR^{n+1}$ as
\[\abs{X-Y}_p:=\max(\sqrt{\abs{t-s}},\abs{x-y}),\]
where $\abs{\,\cdot\,}$ denotes the usual Euclidean norm.
For an open set $\cU\subset\bR^{n+1}$, we denote
\[
d_X=\dist(X,\partial_p\cU)=\inf\bigset{\abs{X-Y}_p: Y\in \partial_p\cU};\quad \inf \emptyset = \infty,
\]
where $\partial_p\cU$ denotes the usual parabolic boundary of $\cU$.

For a given function $u=u(X)=u(t,x)$ defined on $Q \subset \bR^{n+1}$, we use
$D_i u$ for $\partial u/\partial x_i$  while we use $u_t$ (or sometimes $D_t u$) for $\partial u/\partial t$.
We also write $Du$ (or sometimes $D_x u$) for the vector $(D_1 u,\ldots,D_n u)$.
For $\mu \in (0,1]$, we define
\[
\abs{u}_{\mu/2,\mu;Q}=\abs{u}_{0;Q}+[u]_{\mu/2,\mu;Q}: = \sup_{X\in Q}\,\abs{u(X)}+\sup_{\substack{X, Y \in Q\\ X\neq Y}} \frac{\abs{u(X)-u(Y)}}{\abs{X-Y}_p^\mu}.
\]
By $\mathscr{C}^{\mu/2,\mu}(Q)$ we denote the set of all bounded measurable functions $u$ on $Q$ for which $\abs{u}_{\mu/2,\mu;Q}$ is finite.
We use the following notation for basic cylinders in $\bR^{n+1}$:
\begin{align*}
Q^-_r(X)&=(t-r^2,t)\times B_r(x);\\
Q^+_r(X)&=(t,t+r^2)\times B_r(x);\\
Q_r(X)&=(t-r^2,t+r^2)\times B_r(x),
\end{align*}
where $B_r(x)$ is the usual Euclidean ball of radius $r$ centered at $x\in \bR^n$.
For an open set $\cU\subset\bR^{n+1}$, we define
\[
\cU_r (X) =  \cU\cap Q_r (X),\quad \cS_r(X) =\partial_p \cU \cap Q_r(X),
\]
and similarly $\cU_r^\pm(X)$ and $\cS_r^\pm(X)$.
We write $\cU(t_0)$ for the set of all points  $(t_0,x)$ in $\cU$ and $I(\cU)$  for the set of all $t$ such that $\cU(t)$ is nonempty.
We denote
\[
\tri{u}_{\cU}^2= \norm{Du}_{L^2(\cU)}^2+\esssup_{t\in I(\cU)}\, \norm{u(t,\cdot)}_{L^2(\cU(t))}^2.
\]
Finally, we denote $a\wedge b= \min(a,b)$ and $a\vee b= \max(a,b)$.

\subsection{Function spaces}		\label{sec2.2}
Throughout this section, we shall always denote by $Q$ the cylinder $(a,b)\times\Omega$, where $-\infty<a<b<\infty$ and $\Omega$ is an open connected set in $\bR^n$.
We denote by $\cW^{0,1}_2(Q)$ the Hilbert space with the inner product
\[
\ip{u,v}_{\cW^{0,1}_2(Q)}:=\int_{Q} uv+\sum_{k=1}^n \int_{Q} D_k u D_k v
\]
and by $\cW^{1,1}_2(Q)$ the Hilbert space with the inner product
\[
\ip{u,v}_{\cW^{1,1}_2(Q)}:=\int_{Q} uv+\sum_{k=1}^n \int_{Q} D_k u D_k v+\int_{Q} u_t v_t.
\]
We define $\cV_2(Q)$ as the Banach space consisting of all elements of $\cW^{0,1}_2(Q)$ having a finite norm $\norm{u}_{\cV_2(Q)}:= \tri{u}_{Q}$.
The space $\cV^{0,1}_2(Q)$ is obtained by completing the set $\cW^{1,1}_2(Q)$ in the norm of $\cV_2(Q)$.

Let $\cS \subset \overline Q$ and $u$ be a $\cW^{0,1}_2(Q)$ function.
We say that $u$ vanishes (or write $u=0$) on $\cS$ if $u$ is a limit in $\cW^{0,1}_2(Q)$ of a sequence of functions in $C^\infty_c(\overline Q\setminus \cS)$.
We define $\rV_2(Q)$ (resp. $\rV^{0,1}_2(Q)\,$) the set of all functions $u$ in $\cV_2(Q)$ (resp. $\cV^{0,1}_2(Q)\,$) that vanishes on the lateral boundary $\partial_x Q:=(a,b)\times \partial\Omega$ of $Q$.
By a well known Sobolev-like embedding theorem (see e.g., \cite[\S II.3]{LSU}),
we have
\begin{equation} \label{eqn:2.2}
\norm{u}_{L^{2+4/n}(Q)} \leq C(n)\, \tri{u}_{Q}, \quad\forall u\in \rV_2(Q).
\end{equation}

The space $\cW^{0,1}_q(\cU)$ ($1\leq q <\infty$) denotes the Banach space consisting of functions $u\in L^q(\cU)$ with weak derivatives $D_\alpha u \in L^q(\cU)$ ($\alpha=1,\ldots,n$) with the norm
\[
\norm{u}_{\cW^{0,1}_q(\cU)}=\norm{u}_{L^q(\cU)}+\norm{Du}_{L^q(\cU)}.
\]
We write $u \in L^\infty_c(\cU)$ if $u \in L^\infty(\cU)$ has a support in $\overline \cU$.

\subsection{Weak Solutions}
For $\vec f, \vec g_\alpha \in L^1_{loc}(\cU)^m$ ($\alpha=1,\ldots,n$), we say that $\vec u$ is a weak solution of $\cL \vec u=\vec f+ D_\alpha\vec g_\alpha$ in $\cU$ if $\vec u \in \cV_2(\cU)^m$ and satisfies
\begin{equation*}  
-\int_{\cU} u^i\phi^i_t+ \int_{\cU} A^{\alpha\beta}_{ij}D_\beta u^j D_\alpha\phi^i= \int_{\cU} f^i \phi^i- \int_{\cU} g^i_\alpha D_\alpha\phi^i, \quad\forall \vec \phi \in C^\infty_c (\cU)^m.
\end{equation*}
Similarly, we say that $\vec u$ is a weak solution of $\cLt \vec u=\vec f+D_\alpha\vec g_\alpha$ in $\cU$ if $\vec u\in \cV_2(\cU)^m$ and satisfies
\begin{equation}  \label{eqn:E-71b}
\int_{\cU} u^i\phi^i_t+ \int_{\cU}{}^t\!A^{\alpha\beta}_{ij}D_\beta u^j D_\alpha\phi^i=
\int_{\cU} f^i \phi^i- \int_{\cU} g^i_\alpha D_\alpha\phi^i, \quad\forall \vec\phi \in C^\infty_c (\cU)^m.
\end{equation}

\subsection{Green's matrix}
Let  $\cU=\bR\times\Omega$ be a cylindrical domain, where $\Omega$ is an open connected set in $\bR^n$.
We say that an $m\times m$ matrix valued function $\vec \cG(X,Y)=\vec \cG(t,x,s,y)$, with entries $\cG_{ij} (X,Y)$ defined on the set $\bigset{(X,Y)\in\cU\times\cU: X\neq Y}$, is a Green's matrix of $\cL$ in $\cU$ if it satisfies the following properties:
\begin{enumerate}[i)]
\item
$\vec \cG(\cdot,Y)\in \cW^{0,1}_{1,loc}(\cU)$ and $\cL \vec \cG(\cdot,Y) = \delta_Y I$ for all $Y\in\cU$, in the sense that
\begin{equation*}  
\int_{\cU}\left( -\cG_{ik}(\cdot,Y)\phi^i_t+ A^{\alpha\beta}_{ij} D_\beta \cG_{jk}(\cdot,Y)D_\alpha \phi^i\right) = \phi^k(Y), \quad \forall \vec \phi \in C^\infty_c(\cU)^m.
\end{equation*}
\item
$\vec \cG(\cdot,Y) \in \cV_2^{0,1}(\cU\setminus Q_r(Y))$ for all $Y\in\cU$ and $r>0$ and $\vec \cG(\cdot,Y)$ vanishes on $\partial_p\cU$.
\item
For any $\vec f=(f^1,\ldots, f^m)^T \in L^\infty_c(\cU)$, the function $\vec u$ given by
\begin{equation*}       
\vec u(X):=\int_\Omega \vec \cG(Y,X) \vec f(Y)\,dY
\end{equation*}
belongs to $\rV^{0,1}_2(\cU)$ and satisfies $\cLt \vec u=\vec f$ in the sense of \eqref{eqn:E-71b}.
\end{enumerate}
We note that part iii) of the above definition gives the uniqueness of a Green's matrix; see  \cite{CDK}.
We shall hereafter say that $\vec \cG(X,Y)$ is ``the'' Green's matrix of $\cL$ in $\cU=\bR\times\Omega$ if it satisfies all the above properties.

\section{Main results}							\label{main}

The following condition $\LB$ shall be used to obtain the global Gaussian estimates for the Green's matrix $\vec \cG(X,Y)=\vec \cG(t,x,s,y)$ of $\cL$ in $\cU=\bR\times \Omega$.
See Theorem~\ref{thm1} below.

\begin{CLB}
There exist $R_{max}\in (0,\infty]$ and $N_0>0$ so that for all $X\in\cU$ and $0<R<R_{max}$, the following holds.
\begin{enumerate}[i)]
\item
If $\vec u$ is a weak solution of $\cL \vec u=0$ in $\cU_R^-(X)$ vanishing on $\cS_R^-(X)$, then we have
\[
\norm{\vec u}_{L^\infty(\cU_{R/2}^-(X))} \le N_0 R^{-(2+n)/2} \norm{\vec u}_{L^2(\cU_R^-(X))}.
\]

\item
If $\vec u$ is a weak solution of $\cLt \vec u=0$ in $\cU_R^+(X)$ vanishing on $\cS_R^+(X)$, then we have
\[
\norm{\vec u}_{L^\infty(\cU_{R/2}^+(X))} \le N_0 R^{-(2+n)/2} \norm{\vec u}_{L^2(\cU_R^+(X))}.
\]
\end{enumerate}
\end{CLB}

The following condition $\IH$ means that weak solutions of $\cL\vec u=0$ and $\cLt \vec u=0$ enjoy interior H\"older continuity estimates with exponent $\mu_0$.
It is not hard to see that this condition is equivalent to saying that the operator $\cL$ and its adjoint $\cLt$ satisfy the property $\PH$ in \cite{CDK}; see Lemma~\ref{lem8.0a} for the proof.

\begin{CIH}
There exist $\mu_0\in (0,1]$, $R_c \in (0,\infty]$, and  $C_0>0$ so that for all $X\in \cU$ and $0<R<R_c\wedge d_X$, the following holds.
\begin{enumerate}[i)]
\item
If  $\vec u$ is a weak solution of $\cL \vec u=0$ in $Q_R^-(X)$, then we have
\[
[\vec u]_{\mu/2,\mu;Q_{R/2}^-(X)}\leq C_0 R^{-\mu_0}\left(\fint_{Q_R^-(X)} \abs{\vec u}^2\right)^{1/2}.
\]
\item
If  $\vec u$ is a weak solution of $\cLt \vec u=0$ in $Q_R^+(X)$, then we have
\[
[\vec u]_{\mu/2,\mu;Q_{R/2}^+(X)}\leq C_0 R^{-\mu_0}\left(\fint_{Q_R^+(X)} \abs{\vec u}^2\right)^{1/2}.
\]
\end{enumerate}
\end{CIH}

\begin{theorem} \label{thm1}
Let $\cU=\bR\times\Omega$ and assume the conditions $\IH$ and $\LB$.
Then the Green's matrix $\vec \cG(X,Y)$ of $\cL$ in $\cU$ exists and for all $X, Y\in \cU$ with $X \neq Y$,  we have
\begin{equation} \label{eq2.17d}
\abs{\vec \cG(t,x,s,y)}\leq C\, \chi_{(0,\infty)}(t-s)\cdot \left\{(t-s)\wedge R_{max}^2\right\}^{-n/2}\exp\left\{-\kappa \abs{x-y}^2/(t-s)\right\},
\end{equation}
where $C= C(n,m,\nu, N_0)$ and $\kappa=\kappa(\nu)$.
\end{theorem}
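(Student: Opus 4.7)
The plan is to adapt the Fabes--Stroock--Davies approach to Aronson's Gaussian bound to the setting of systems in a cylindrical domain, with Condition $\LB$ playing the role that Moser's $L^2$-to-$L^\infty$ estimate plays in the classical scalar case. Existence of $\vec\cG(X,Y)$ is essentially already contained in \cite{CDK}: by Lemma~\ref{lem8.0a}, $\IH$ is equivalent to the interior H\"older property $\PH$ used there. The substance of the proof is therefore the pointwise upper bound \eqref{eq2.17d}; the support condition $\chi_{(0,\infty)}(t-s)$ is built into the construction.

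I would first derive an on-diagonal estimate. Given $\vec f\in L^\infty_c(\cU(s))$, set $\vec u(t,x):=\int_\Omega\vec\cG(t,x,s,y)\vec f(y)\,dy$; this is a weak solution of $\cL\vec u=0$ in $(s,\infty)\times\Omega$, vanishes on $\bR\times\partial\Omega$, and has initial trace $\vec f$ at time $s$. Fix $X=(t,x)$ with $t>s$ and set $R=\tfrac{1}{2}(\sqrt{t-s}\wedge R_{max})$; the backward cylinder $\cU^-_R(X)$ then lies above the slab $\{\tau>s\}$, so $\cS^-_R(X)$ is purely lateral, and $\LB$ together with the standard energy identity $\norm{\vec u(\tau,\cdot)}_{L^2}\le\norm{\vec f}_{L^2}$ yields
$$\abs{\vec u(t,x)}\le CR^{-(n+2)/2}\norm{\vec u}_{L^2(\cU^-_R(X))}\le CR^{-n/2}\norm{\vec f}_{L^2(\Omega)}.$$
Running the same reasoning for $\cLt$ via part (ii) of $\LB$ and dualizing yields an $L^1\to L^2$ bound. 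Composing the $L^1\to L^2$ and $L^2\to L^\infty$ maps across the midpoint $(s+t)/2$ through the Chapman--Kolmogorov identity $\vec\cG(t,x,s,y)=\int\vec\cG(t,x,\tau,z)\vec\cG(\tau,z,s,y)\,dz$ and specializing $\vec f$ to approximate Dirac masses then produces the on-diagonal bound $\abs{\vec\cG(t,x,s,y)}\le C[(t-s)\wedge R_{max}^2]^{-n/2}$.

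Next I would introduce Davies' exponential perturbation to upgrade this to a Gaussian. Fix a bounded Lipschitz $\psi\colon\bR^n\to\bR$ with $\norm{\nabla\psi}_\infty\le1$ and $\alpha\in\bR$. Testing $\cL\vec u=0$ against $e^{2\alpha\psi(x)}\vec u$ and using \eqref{eqP-02}--\eqref{eqP-03} with Cauchy--Schwarz to absorb the cross term produced by differentiating the weight gives the weighted energy inequality
$$\frac{d}{dt}\int_\Omega e^{2\alpha\psi}\abs{\vec u}^2+\nu\int_\Omega e^{2\alpha\psi}\abs{D\vec u}^2\le C\alpha^2\int_\Omega e^{2\alpha\psi}\abs{\vec u}^2,$$
hence $\norm{e^{\alpha\psi}\vec u(\tau,\cdot)}_{L^2}\le e^{C\alpha^2(\tau-s)}\norm{e^{\alpha\psi}\vec f}_{L^2}$ by Gronwall. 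Rerunning $\LB$ as in the on-diagonal step, but extracting the weight through $e^{\alpha\psi(x)}\le e^{\abs{\alpha}R}e^{\alpha\psi(x')}$ for $x'\in B_R(x)$, yields
$$e^{\alpha\psi(x)}\abs{\vec u(t,x)}\le CR^{-n/2}e^{\abs{\alpha}R+C\alpha^2(t-s)}\norm{e^{\alpha\psi}\vec f}_{L^2},$$
and the symmetric bound for $\cLt$ followed by composition across the midpoint produces
$$\abs{\vec\cG(t,x,s,y)}\le C[(t-s)\wedge R_{max}^2]^{-n/2}\exp\bigl\{C\alpha^2(t-s)+C\abs{\alpha}R-\alpha[\psi(x)-\psi(y)]\bigr\}.$$
Choosing $\psi$ to be a bounded truncation of $z\mapsto(x-y)\cdot z/\abs{x-y}$ (so that $\psi(x)-\psi(y)=\abs{x-y}$) and then optimizing $\alpha=\kappa\abs{x-y}/[(t-s)\wedge R_{max}^2]$ with $\kappa$ sufficiently small turns the exponent into $-\kappa'\abs{x-y}^2/(t-s)$ when $\abs{x-y}\ge\sqrt{t-s}$; in the opposite regime the Gaussian factor is bounded and the on-diagonal bound already delivers \eqref{eq2.17d}. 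For the remaining case $t-s>R_{max}^2$, I would iterate the short-time Gaussian bound via Chapman--Kolmogorov over roughly $(t-s)/R_{max}^2$ sub-intervals of length $R_{max}^2$; Gaussian convolutions reproduce Gaussians with an adjusted $\kappa$, and the prefactor collapses to $R_{max}^{-n}=[(t-s)\wedge R_{max}^2]^{-n/2}$.

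The principal obstacle is the Davies step, since $\LB$ as stated applies only to exact weak solutions of $\cL$, not to the twisted function $e^{\alpha\psi}\vec u$ (which satisfies a perturbed equation with lower-order coefficients of size $\alpha$ and $\alpha^2$). Keeping the weight outside $\LB$ and paying the multiplicative factor $e^{\abs{\alpha}R}$ through the Lipschitz bound $\abs{\psi(x)-\psi(x')}\le R$ is the workaround; with the natural choice $R\asymp\sqrt{t-s}\wedge R_{max}$ the cross-term $\abs{\alpha}R$ is controlled by $\alpha^2(t-s)$ plus a constant, so the three parameters line up automatically. Verifying that the constants behave uniformly across the two regime splits $t-s$ versus $R_{max}^2$ and $\abs{x-y}$ versus $\sqrt{t-s}$, together with the routine mollification needed to justify testing against $e^{2\alpha\psi}\vec u$ (which requires truncating $\psi$ and passing to the limit), is where some care is needed to produce a single clean bound of the form \eqref{eq2.17d}.
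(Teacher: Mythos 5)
Your strategy is the same as the paper's: Davies' exponential perturbation, Condition $\LB$ in place of Moser's $L^2\to L^\infty$ estimate, dualization for $L^1\to L^2$, and composition across the midpoint. The one place where you depart is in the treatment of the long-time regime $t-s>R_{max}^2$, and that is also where there is a genuine gap.

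You take the phase parameter to be $\alpha=\kappa\abs{x-y}/\bigl[(t-s)\wedge R_{max}^2\bigr]$ and, because this choice fails when $t-s>R_{max}^2$, you propose to iterate the short-time Gaussian bound over $\sim(t-s)/R_{max}^2$ sub-intervals of length $R_{max}^2$ by Chapman--Kolmogorov. This does not close. Convolving $k$ pointwise Gaussian upper bounds of the form $C\tau^{-n/2}e^{-\kappa\abs{\cdot}^2/\tau}$ produces a Gaussian of the right shape but with a multiplicative constant that picks up a dimensional factor (roughly $(\pi/\kappa)^{n/2}$) at each step; after $k=(t-s)/R_{max}^2$ iterations the constant is $C^k$, which is unbounded as $t-s\to\infty$. (Alternatively, if you fix the prefactor as $R_{max}^{-n}$ rather than $\tau^{-n/2}$, it is $\kappa$ that halves with each convolution.) Either way the resulting constants depend on $t-s$, which is exactly what Theorem~\ref{thm1} forbids: $C=C(n,m,\nu,N_0)$ and $\kappa=\kappa(\nu)$ must be uniform.

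The fix is that no iteration is needed: the Davies argument, run once, already covers all $t>s$. Two observations make this work. First, the weighted $L^2\to L^2$ contraction $\norm{P^\psi_{s\to t}\vec f}_{L^2}\le e^{\vartheta\gamma^2(t-s)}\norm{\vec f}_{L^2}$ is valid for arbitrary $t>s$, so the middle of the interval is handled by the semigroup bound, not by iteration; $\LB$ is only invoked on a single backward (resp.\ forward) cylinder of radius $\rho=\sqrt{t-s}\wedge R_{max}$ at the endpoints. Second, with the phase parameter $\gamma\asymp\abs{x-y}/(t-s)$ (denominator $t-s$, not $(t-s)\wedge R_{max}^2$), the Lipschitz cross-term is $\gamma\rho\le\gamma\sqrt{t-s}=\xi/(2\vartheta)$ with $\xi=\abs{x-y}/\sqrt{t-s}$, which is absorbed into the $-\xi^2/(4\vartheta)$ term produced by optimizing $\gamma$; this absorption works uniformly whether $\rho=\sqrt{t-s}$ or $\rho=R_{max}$, so the bound $C\rho^{-n}\exp\{-\kappa\abs{x-y}^2/(t-s)\}$ comes out in one stroke for all $t>s$. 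This is precisely the computation in the paper (see the choice $\gamma=\abs{x-y}/2\vartheta(t-s)$ and the inequality $\rho/\sqrt{t-s}\le 1$ after \eqref{eq3.83}). With this correction the rest of your outline is sound.
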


The following theorem says that the converse of Theorem~\ref{thm1} is also true.

\begin{theorem} 				\label{thm1c}
Assume the condition $\IH$.
Let $\vec \cG(X,Y)$ be the Green's matrix of $\cL$ in $\cU=\bR\times\Omega$.
Suppose there exist $R_{max}\in (0,\infty]$ and positive constants $C_0$ and $\kappa$ such that for all $X, Y\in \cU$ with $X \neq Y$, we have
\begin{equation}            \label{eq2.17dd}
\abs{\vec \cG(t,x,s,y)}\leq C_0\, \chi_{(0,\infty)}(t-s)\cdot \left\{(t-s)\wedge R_{max}^2\right\}^{-n/2}\exp\left\{-\kappa \abs{x-y}^2/(t-s)\right\}.
\end{equation}
Then the condition $\LB$ is satisfied with the same $R_{max}$ and $N_0=N_0(n,m,\nu, C_0, \kappa)$.
\end{theorem}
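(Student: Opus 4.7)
The plan is to derive \LB{} from the Gaussian bound by combining a cutoff argument with the Green's matrix representation. Fix $X_0=(t_0,x_0)\in\cU_{R/2}^-(X)$; by \IH{}, $\vec u$ is continuous at interior points, and at Lebesgue points elsewhere, so it suffices to bound $\abs{\vec u(X_0)}$ pointwise and take the essential supremum. Choose a smooth cutoff $\eta=\eta_1(t)\eta_2(y)$ with $\eta\equiv 1$ on $\cU_{5R/8}^-(X)$, $\supp\eta\subset\cU_{3R/4}^-(X)$, and $\abs{\eta_t}\le CR^{-2}$, $\abs{D\eta}\le CR^{-1}$. Set $\vec v:=\eta\vec u$, extended by zero to $\cU$. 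A direct computation using $\cL\vec u=0$ on $\cU_R^-(X)$ and $\vec u=0$ on $\cS_R^-(X)$ shows that $\vec v$ is a weak solution on $\cU$ of $\cL\vec v=\vec f+D_\alpha\vec g_\alpha$, where $\vec f=\eta_t\vec u-\vec A^{\alpha\beta}D_\alpha\eta\,D_\beta\vec u$ and $\vec g_\alpha=-\vec A^{\alpha\beta}D_\beta\eta\,\vec u$. Both source terms are supported in the annular region $A:=\cU_{3R/4}^-(X)\setminus\cU_{5R/8}^-(X)$, which has parabolic distance $\ge cR$ from $X_0$.

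By the adjoint symmetry of the Green's matrix, $\vec\cG(X_0,\cdot)^T$ is a Green's matrix for $\cLt$ with pole at $X_0$; testing the weak equation for $\vec v$ against it (legitimate because $\vec f,\vec g_\alpha$ vanish in a neighborhood of $X_0$) gives
\begin{equation*}
\vec u(X_0)=\vec v(X_0)=\int_\cU \vec\cG(X_0,Y)\vec f(Y)\,dY-\int_\cU D_\alpha^Y\vec\cG(X_0,Y)\,\vec g_\alpha(Y)\,dY.
\end{equation*}
For $R<R_{max}$ and $Y\in A$, the Gaussian bound \eqref{eq2.17dd} gives $\abs{\vec\cG(X_0,Y)}\le CR^{-n}$ (the exponential factor controls the $(t_0-s)^{-n/2}$ singularity via $\abs{x_0-y}\ge cR$ when $t_0-s$ is small), so $\norm{\vec\cG(X_0,\cdot)}_{L^2(A)}\le CR^{1-n/2}$. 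The Caccioppoli inequality applied to $\vec\cG(X_0,\cdot)$, a weak solution of $\cLt=0$ in a neighborhood of $A$ that vanishes on $\bR\times\partial\Omega$, yields $\norm{D^Y\vec\cG(X_0,\cdot)}_{L^2(A)}\le CR^{-n/2}$. The boundary Caccioppoli for $\vec u$ (test with $\zeta^2\vec u$ for $\zeta$ supported in $\cU_R^-(X)$ and equal to $1$ on $\cU_{3R/4}^-(X)$, using $\vec u=0$ on $\cS_R^-(X)$) yields $\norm{D\vec u}_{L^2(\cU_{3R/4}^-(X))}\le CR^{-1}\norm{\vec u}_{L^2(\cU_R^-(X))}$; hence $\norm{\vec f}_{L^2(A)}\le CR^{-2}\norm{\vec u}_{L^2(\cU_R^-(X))}$ and $\norm{\vec g_\alpha}_{L^2(A)}\le CR^{-1}\norm{\vec u}_{L^2(\cU_R^-(X))}$. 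Cauchy--Schwarz in each term of the representation gives
\begin{equation*}
\abs{\vec u(X_0)}\le CR^{1-n/2}\cdot R^{-2}\norm{\vec u}_{L^2(\cU_R^-(X))}+CR^{-n/2}\cdot R^{-1}\norm{\vec u}_{L^2(\cU_R^-(X))}\le CR^{-(n+2)/2}\norm{\vec u}_{L^2(\cU_R^-(X))},
\end{equation*}
which is the desired bound. Part ii) follows identically after swapping $\cL$ and $\cLt$ and replacing $\cU_R^-(X)$ by $\cU_R^+(X)$.

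The main technical obstacle is the rigorous justification of the Green's matrix representation formula, which rests on the adjoint symmetry $\vec\cG(Y,X)^T$ being the Green's matrix of $\cLt$ (a consequence of the uniqueness part of property iii)) together with the $\cV_2^{0,1}$-regularity of $\vec\cG(X_0,\cdot)$ away from its pole; both are essentially developed in \cite{CDK}. Granted these, the Caccioppoli estimates for $\vec\cG$ up to the lateral boundary $\bR\times\partial\Omega$ follow by standard energy arguments, and the scaling reconciles exactly to give the $R^{-(n+2)/2}$ factor required by \LB{}.
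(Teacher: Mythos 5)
Your proposal is correct and follows essentially the same strategy as the paper's proof: cut off $\vec u$, write out the source terms $\vec f, \vec g_\alpha$ supported in an annular region, test the resulting equation against the Green's matrix (exploiting the adjoint symmetry $\vec\cG(X,Y)={}^t\vec\cG(Y,X)^T$), deduce the $L^2$ annular estimates for $\vec\cG$ and $D\vec\cG$ from the Gaussian pointwise bound together with an energy/Caccioppoli inequality, and close with Cauchy--Schwarz and a boundary Caccioppoli estimate for $\vec u$. The only cosmetic differences are that you prove part i) while the paper proves part ii), and that the paper justifies the representation formula rigorously by pairing against the averaged Green's matrix $\vec\cG^\rho(\cdot,Y)$ and letting $\rho\to 0$ (its displays (5.6)--(5.8)), rather than testing directly against $\vec\cG(X_0,\cdot)^T$ --- which you explicitly flag as the main technical point deferred to \cite{CDK}.
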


The following condition $\LH$ means, loosely speaking, that weak solutions of $\cL u=0$ and $\cLt u=0$ vanishing on a relatively open subset $\cS$ of  $\partial_p\cU$ are locally H\"older continuous up to $\cS$ with exponent $\mu_0$.

\begin{CLH}
There exist $\mu_0\in (0,1]$, $R_{max} \in (0,\infty]$, and  $N_1>0$ so that for all $X\in\cU$ and $0<R<R_{max}$, the following holds.
\begin{enumerate}[i)]
\item
If  $\vec u$ is a weak solution of $\cL \vec u=0$ in $\cU_R^-(X)$ vanishing on $\cS_R^-(X)$, then we have
\[
[\tilde{\vec u}]_{\mu/2,\mu;Q_{R/2}^-(X)}\leq N_1 R^{-\mu_0}\left(\fint_{Q_R^-(X)} \abs{\tilde{\vec u}}^2\right)^{1/2};\quad \tilde{\vec u}=\chi_{\cU_R^-(X)} \vec u.
\]
\item
If  $\vec u$ is a weak solution of $\cLt \vec u=0$ in $\cU_R^+(X)$ vanishing on $\cS_R^+(X)$, then we have
\[
[\tilde{\vec u}]_{\mu/2,\mu;Q_{R/2}^+(X)} \leq N_1 R^{-\mu_0}\left(\fint_{Q_R^+(X)} \abs{\tilde{\vec u}}^2\right)^{1/2};\quad \tilde{\vec u}=\chi_{\cU_R^+(X)} \vec u.
\]
\end{enumerate}
\end{CLH}

\begin{remark}								\label{rmk3.5}
In the above condition $\LH$, the constant $R_{max}$ is interchangeable with $c\cdot R_{max}$ for any fixed $c \in (0,\infty)$, possibly at the cost of increasing the constant $N_1$.
\end{remark}

It is not hard to see that $\LH$ implies $\LB$; see Lemma~\ref{lem2.19} in Appendix.
Also, it is obvious that $\LH$ implies $\IH$.
Therefore if  $\cU=\bR\times\Omega$ and $\LH$ is satisfied, then by Theorem~\ref{thm1}, the Green's matrix $\vec \cG(X,Y)$ of $\cL$ in $\cU$ exists and satisfies the estimate \eqref{eq2.17d}.
The following theorem says that in fact, in such a case, a better bound for the Green's matrix $\vec \cG(X,Y)$ is available near the boundary $\partial_p \cU=\bR\times\partial\Omega$.

\begin{theorem}			\label{thm2}
Let $\cU=\bR\times\Omega$ and assume the condition $\LH$.
Let $\vec \cG(X,Y)$ be the Green's matrix of $\cL$ in $\cU$ and denote
\begin{equation}			\label{eq3.6mm}
\delta(X,Y)= \left(1 \wedge \frac {d_X} {R_{max}\wedge \abs{X-Y}_p} \right) \left(1 \wedge \frac{d_Y} {R_{max}\wedge \abs{X-Y}_p}\right).
\end{equation}
Then for all $X, Y \in \cU$ with $X\neq Y$, we have
\begin{equation}			\label{eq3.8yy}
\abs{\vec \cG(t,x,s,y)}\leq C\,\chi_{(0,\infty)}(t-s)\cdot \delta(X,Y)^{\mu_0}  \left\{(t-s)\wedge R_{max}^2\right\}^{-n/2}\exp\left(-\kappa \frac{\abs{x-y}^2}{t-s} \right),
\end{equation}
where $C=C(n,m,\nu, \mu_0,N_1)$ and $\kappa=\kappa(\nu)$.
\end{theorem}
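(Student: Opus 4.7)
Since $\LH$ implies both $\LB$ (by Lemma~\ref{lem2.19}) and $\IH$, Theorem~\ref{thm1} supplies the existence of $\vec\cG$ and the Gaussian bound \eqref{eq2.17d}; abbreviate the right-hand side of \eqref{eq2.17d} as $M(X,Y)$ and set $r:=R_{max}\wedge\abs{X-Y}_p$. Since $\delta(X,Y)\le 1$, \eqref{eq3.8yy} already follows from \eqref{eq2.17d} when $d_X\wedge d_Y\ge r$. My plan is to: (a) apply $\LH$(i) near $X$ to establish the one-sided improvement $\abs{\vec\cG(X,Y)}\le C(1\wedge d_X/r)^{\mu_0}M(X,Y)$, and then (b) apply $\LH$(ii) near $Y$ to the adjoint-side function $Z\mapsto\vec\cG(X,Z)$, feeding the bound from (a) into the $L^2$ average to upgrade the single factor into the full product $\delta(X,Y)^{\mu_0}$.

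For step (a), assume $d_X<r/4$ (the complementary range $d_X\ge r/4$ is trivial from \eqref{eq2.17d}, as $(1\wedge d_X/r)^{\mu_0}\ge 4^{-\mu_0}$). Apply $\LH$(i) at $X$ itself with $R:=r/2<R_{max}$. An elementary check shows that $\abs{X-Y}_p\ge 2R$ forces $Y\notin\overline{Q_R^-(X)}$, so $\vec u(\cdot):=\vec\cG(\cdot,Y)$ is a weak solution of $\cL\vec u=0$ on $\cU_R^-(X)$; moreover $\vec u$ vanishes on $\cS_R^-(X)\subset\partial_p\cU$. Writing $\tilde{\vec u}:=\chi_{\cU_R^-(X)}\vec u$, pick $x_0\in\partial\Omega$ realizing $d_X$ and let $X_0:=(t,x_0)\in\partial_p\cU\cap\overline{Q_{R/2}^-(X)}$ (using $d_X<R/2$). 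The continuous extension of $\tilde{\vec u}$ to $\overline{Q_{R/2}^-(X)}$, provided by the finite H\"older seminorm of $\LH$(i), takes the value $0$ at $X_0$ (every neighborhood of $X_0$ in $Q_{R/2}^-(X)$ meets $\Omega^c$) and the value $\vec\cG(X,Y)$ at $X$ (by interior continuity from $\IH$). Hence
\[
\abs{\vec\cG(X,Y)}\le[\tilde{\vec u}]_{\mu_0/2,\mu_0;Q_{R/2}^-(X)}\,\abs{X-X_0}_p^{\mu_0}\le C\,N_1\,R^{-\mu_0}\,d_X^{\mu_0}\Bigl(\fint_{Q_R^-(X)}\abs{\tilde{\vec u}}^2\Bigr)^{1/2}.
\]
Theorem~\ref{thm1} bounds $\abs{\tilde{\vec u}(Z)}\le M(Z,Y)$ on the cylinder, and a routine Gaussian comparison yields $M(Z,Y)\le C\,M(X,Y)$ for every $Z\in Q_R^-(X)$, at the cost of a slightly smaller exponential constant. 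This gives the one-sided estimate $\abs{\vec\cG(X,Y)}\le C(d_X/r)^{\mu_0}M(X,Y)$.

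For step (b), assume $d_Y<r/4$. By reciprocity, $\vec v(Z):=\vec\cG(X,Z)$ solves $\cLt\vec v=0$ on $\cU\setminus\{X\}$ and vanishes on $\partial_p\cU$; apply $\LH$(ii) at $Y$ with $R=r/2$. The analogous geometric setup (with $y_0\in\partial\Omega$ realizing $d_Y$ and $Y_0:=(s,y_0)\in\overline{Q_{R/2}^+(Y)}$) leads to
\[
\abs{\vec\cG(X,Y)}\le C\,N_1\,R^{-\mu_0}\,d_Y^{\mu_0}\Bigl(\fint_{Q_R^+(Y)}\abs{\tilde{\vec v}}^2\Bigr)^{1/2}.
\]
The crucial twist is that for the $L^2$ average we invoke not the bound of Theorem~\ref{thm1} but the already-improved estimate from step (a), applied with $Z$ in place of $Y$---valid because $R_{max}\wedge\abs{X-Z}_p\asymp r$ for every $Z\in\cU_R^+(Y)$---to obtain $\abs{\vec v(Z)}\le C(d_X/r)^{\mu_0}M(X,Z)\le C(d_X/r)^{\mu_0}M(X,Y)$. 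Substituting gives $\abs{\vec\cG(X,Y)}\le C(d_X/r)^{\mu_0}(d_Y/r)^{\mu_0}M(X,Y)$, which is \eqref{eq3.8yy}.

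The principal obstacles I anticipate are (i) preserving the Gaussian exponential factor through the successive $L^2$ averages, handled by absorbing a fixed fraction of $\abs{x-y}^2/(t-s)$ into $\kappa$ at each application; (ii) the continuous extension of $\tilde{\vec u}$ and $\tilde{\vec v}$ to the lateral-boundary points $X_0$ and $Y_0$ with value zero, for which it suffices that $\partial\Omega$ is the topological boundary of the open set $\Omega$; and (iii) the bootstrapping step itself, which requires the one-sided estimate to be uniformly valid for $Z$ ranging over the averaging cylinder around $Y$---the comparability $R_{max}\wedge\abs{X-Z}_p\asymp r$ is the key point and depends on our choices $R=r/2$ and $d_Y<r/4$.
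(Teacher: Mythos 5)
Your proposal is correct and follows essentially the same route as the paper: the one-sided improvement via the boundary H\"older estimate applied to a column of $\vec\cG(\cdot,Y)$ (the paper isolates this as Lemma~\ref{lem3.6}), followed by a bootstrap using reciprocity and the adjoint-side estimate $\LH$(ii) with the improved bound fed into the $L^2$ average (the paper's ``repeating the above argument to ${}^t\vec\cG(\cdot,X)$'' with \eqref{eq22.00k} in place of \eqref{eq2.25t}). The one place where you compress is the step you label ``a routine Gaussian comparison $M(Z,Y)\le CM(X,Y)$''; that is exactly where the paper devotes Cases 1--3, and in particular Case 2 requires the monotonicity analysis of $g_0(\tau)=\tau^{-n/2}e^{-\kappa/4\tau}$ on $(0,1]$ and produces the $\kappa\mapsto\kappa/4$ degradation at each of the two passes (hence $\kappa/16$ overall, which you also correctly anticipate). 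That step is true but not entirely ``routine''; if you expanded it you would essentially reproduce the paper's three-case argument. The remaining cosmetic differences (your $R=r/2$ vs.\ the paper's $R=r/4$; your continuous-extension-to-the-closure argument vs.\ the paper's limit $r\to d_X$ inside Lemma~\ref{lem3.6}) are immaterial.
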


\begin{remark}							\label{rmk:P-03}
Suppose the operator $\cL$ satisfies the following property, which we shall refer to as the condition $\LHP$:
There exist $\mu_0\in (0,1]$, $R_{max} \in (0,\infty]$, and  $C_0>0$ such that for all $X\in \cU$ and $0<R<R_{max}$, the following holds:
\begin{enumerate}[i)]
\item
If  $\vec u$ is a weak solution of $\cL \vec u=0$ in $\cU_R^-(X)$ vanishing on $\cS_R^-(X)$, then we have
\[
\int_{\cU_\rho^-(X)} \abs{D\vec u}^2\leq C_0 \left(\frac{\rho}{r}\right)^{n+2\mu_0}\int_{\cU_r^-(X)}\abs{D\vec u}^2, \quad \forall 0<\rho<r\leq R.
\]
\item
If  $\vec u$ is a weak solution of $\cLt \vec u=0$ in $\cU_R^+(X)$ vanishing on $\cS_R^+(X)$, then we have
\[
\int_{\cU_\rho^+(X)} \abs{D\vec u}^2\leq C_0 \left(\frac{\rho}{r}\right)^{n+2\mu_0}\int_{\cU_r^+(X)}\abs{D\vec u}^2, \quad \forall 0<\rho<r\leq R.
\]
\end{enumerate}
Then, the condition $\LH$ is satisfied with $N_1=N_1(n,m,\nu,\mu_0,C_0)$ and  the same $\mu_0$ and $R_{max}$; see Lemma~\ref{lem8.1} in Appendix.
The condition $\LHP$ is reminiscent of the ``Dirichlet property (D)'', which Auscher and Tchamitchian introduced in \cite{AT} in connection with the Gaussian estimates for the heat kernel of elliptic operators with complex coefficients.
We note that the condition $\LH$ is weaker than condition $\LHP$ in general.
\end{remark}

\begin{remark}
It is clear that the estimate \eqref{eq0.1} in the introduction follow from Theorem~\ref{thm1}.
Also, we note that the estimate \eqref{eq0.2} in the introduction follows from Theorem~\ref{thm2} if $\Omega$ is bounded or $R_{max}=\infty$; see Section~\ref{sec:app} and Section~\ref{sec:h} for further discussion.
\end{remark}

\section{Some Applications of Main Results}				\label{sec:app}

\subsection{Scalar case}
Let $\Omega$ be an arbitrary open connected set in $\bR^n$ and $\cU=\bR\times \Omega$.
In the scalar case (i.e., $m=1$), both conditions $\LB$ and $\IH$ are satisfied with $R_{max}=\infty$ and $N_0=N_0(n,\nu)$; see e.g., \cite{LSU, Lieberman}.
Also, in the scalar case, the Green's matrix becomes a nonnegative scalar function.
Therefore, the following corollary is an immediate consequence of Theorem~\ref{thm1}.

\begin{corollary}           \label{cor1}
Let $\cU=\bR\times \Omega$.
If $m=1$, then the Green's function $\cG(X,Y)$ of $\cL$ in $\cU$ exists and for $X, Y\in \cU$ with $X\neq Y$,  we have
\begin{equation}            \label{eq3.11p}
\cG(t,x,s,y) \leq C\, \chi_{(0,\infty)}(t-s)\cdot (t-s)^{-n/2}\exp\left\{-\kappa \abs{x-y}^2/(t-s)\right\},
\end{equation}
where $C=C(n,\nu)$ and $\kappa=\kappa(\nu)$  are universal constants independent of $\Omega$.
\end{corollary}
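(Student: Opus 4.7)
The plan is to reduce Corollary~\ref{cor1} to Theorem~\ref{thm1} by verifying that, in the scalar case $m=1$, both conditions $\IH$ and $\LB$ hold on $\cU=\bR\times\Omega$ with $R_{max}=\infty$ and all constants depending only on $n$ and $\nu$. Once this is in hand, Theorem~\ref{thm1} produces the Green's matrix (which is a single scalar function here) and the bound \eqref{eq2.17d}, which collapses to \eqref{eq3.11p} because $(t-s)\wedge R_{max}^2=t-s$; the universal nature of the constants then yields independence of $\Omega$.

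For $\IH$, the scalar operator $\cL$ is a uniformly parabolic equation in divergence form with bounded measurable coefficients satisfying \eqref{eqP-02}--\eqref{eqP-03}, so the classical De Giorgi--Nash--Moser interior H\"older estimate applies on every cylinder $Q_R^-(X)$ with no restriction on $R$ and with $\mu_0=\mu_0(n,\nu)$, $C_0=C_0(n,\nu)$; see \cite{LSU, Lieberman}. Since $\cLt$ also satisfies \eqref{eqP-02}--\eqref{eqP-03} with the same $\nu$ and is obtained from $\cL$ by reversing time and transposing the coefficient matrix, the analogous estimate holds on forward cylinders $Q_R^+(X)$. Hence $\IH$ is satisfied with $R_c=\infty$.

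For $\LB$, given a weak solution $u$ of $\cL u=0$ in $\cU_R^-(X)$ vanishing on $\cS_R^-(X)$, the strategy is to extend $u$ by zero across $\cS_R^-(X)$, verify that $\abs{u}$ is a nonnegative weak subsolution of an associated scalar parabolic inequality on $Q_R^-(X)$, and run Moser's $L^2$-to-$L^\infty$ iteration to conclude
\[
\norm{u}_{L^\infty(\cU_{R/2}^-(X))}\le N_0\,R^{-(n+2)/2}\,\norm{u}_{L^2(\cU_R^-(X))},
\]
with $N_0=N_0(n,\nu)$ and no restriction on $R$; the companion statement for $\cLt$ and $\cU_R^+(X)$ is handled identically. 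Both are classical and found in \cite{LSU, Lieberman}.

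The only mildly delicate step is the zero-extension used in $\LB$: since $\Omega$ is an arbitrary open set, one must confirm that the trivial extension is a legitimate weak subsolution in $Q_R^-(X)$. This is routine, and is handled by testing against functions compactly supported in $\overline{Q_R^-(X)}\setminus\cS_R^-(X)$ and invoking the definition of "vanishing on $\cS_R^-(X)$" via $\cW_2^{0,1}$-approximation by $C_c^\infty$ functions. Beyond that, everything is a direct application of Theorem~\ref{thm1}, combined with the weak maximum principle (or equivalently part iii) of the Green's function definition applied to nonnegative $f$) to see that the scalar Green's function is nonnegative, so that $\abs{\cG}=\cG$ on the left-hand side of \eqref{eq3.11p}.
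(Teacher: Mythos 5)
Your proposal follows the same route the paper takes: verify $\IH$ and $\LB$ in the scalar case with $R_{max}=\infty$ and constants depending only on $n,\nu$ (via the classical De Giorgi--Nash--Moser theory in \cite{LSU,Lieberman}, including the zero-extension plus Moser iteration for the boundary local boundedness), apply Theorem~\ref{thm1} so that $(t-s)\wedge R_{max}^2=t-s$, and use the nonnegativity of the scalar Green's function to replace $\abs{\cG}$ by $\cG$. You have merely spelled out the details that the paper leaves to the cited references, so the argument is correct and matches the paper's proof in substance.
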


\begin{remark}
Corollary~\ref{cor1} is widely known and originally due to Aronson  \cite{Aronson}.
\end{remark}

Moreover, in the scalar case, the condition $\LH$ is satisfied if the base $\Omega$ satisfies the condition $\CS$, the definition of which is given below.
In fact, if $\cL$ is a small $L^\infty$-perturbation of a diagonal system, then the condition $\LH$ is satisfied whenever the base $\Omega$ satisfies the condition $\CS$; see \S\ref{sec:AD} and Lemma~\ref{lem:G-06} below.
\begin{CEM}
There exist $\theta>0$ and $R_a\in (0,\infty]$ such that
\[
\abs{B_R(x)\setminus\Omega}\ge \theta\abs{B_R(x)},\quad\forall x \in\partial\Omega,\quad \forall R< R_a.
\]
\end{CEM}

The following corollary is then an easy consequence of Theorem~\ref{thm2} and Corollary~\ref{cor1}.

\begin{corollary}						\label{cor2}
Let $\cU=\bR\times \Omega$, where $\Omega$ satisfies the condition $\CS$.
If $m=1$, then the Green's function $\cG(X,Y)$ of $\cL$ in $\cU$ exists and satisfies the estimate \eqref{eq3.11p}.
Denote
\begin{equation*}			
\delta(X,Y)= \left(1 \wedge \frac {d_X} {R_a\wedge \abs{X-Y}_p} \right) \left(1 \wedge \frac{d_Y} {R_a\wedge \abs{X-Y}_p}\right).
\end{equation*}
Then for $X, Y\in \cU$ with $X \neq Y$,  we also have
\begin{equation*}						
\cG(t,x,s,y) \leq C\,\chi_{(0,\infty)}(t-s)\cdot \delta(X,Y)^{\mu_0}  \left\{(t-s)\wedge R_a^2\right\}^{-n/2}\exp\left(-\kappa \frac{\abs{x-y}^2}{t-s} \right),
\end{equation*}
where $C=C(n,\nu,\theta)$, $\mu_0=\mu_0(n,\nu,\theta)$, and $\kappa=\kappa(\nu)$.
\end{corollary}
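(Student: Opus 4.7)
The first assertion of the corollary, namely the existence of the Green's function $\cG(X,Y)$ together with the Gaussian bound \eqref{eq3.11p}, is immediate from Corollary~\ref{cor1}. The entire substance therefore lies in obtaining the additional boundary decay factor $\delta(X,Y)^{\mu_0}$. The plan is to invoke Theorem~\ref{thm2} with $R_{max}$ taken to be a fixed multiple of $R_a$ (this flexibility is permitted by Remark~\ref{rmk3.5}), so that everything reduces to verifying that, in the scalar case, condition $\CS$ on $\Omega$ forces condition $\LH$ on $\cL$ in $\cU=\bR\times\Omega$.

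To verify $\LH$, fix $X \in \cU$ and $0<R<R_a$, and let $u$ be a scalar weak solution of $\cL u = 0$ in $\cU_R^-(X)$ that vanishes on $\cS_R^-(X)$. Set $\tilde u = \chi_{\cU_R^-(X)} u$. The key point is that $\tilde u$ inherits a uniform parabolic H\"older estimate on $Q_{R/2}^-(X)$ with exponent $\mu_0=\mu_0(n,\nu,\theta)$. Interior H\"older continuity at points $Y\in Q_{R/2}^-(X)$ with $d_Y\gtrsim \abs{Y-\partial_p\cU_R^-(X)}_p$ is furnished by $\IH$. For $Y$ within parabolic distance $r$ of some boundary point $Y^*=(s^*,y^*)\in \cS_R^-(X)$ with $y^* \in \partial\Omega$, the measure density hypothesis $\CS$ gives
\[
\abs{Q_r^-(Y^*)\setminus \cU}\ge \theta'\abs{Q_r^-(Y^*)}
\]
for some $\theta'=\theta'(\theta)>0$ and all $r<R_a$, so the classical De~Giorgi--Nash--Moser boundary estimate for scalar divergence form parabolic equations (see e.g.\ \cite{LSU} or \cite{Lieberman}) produces a geometric oscillation decay for $\tilde u$ on parabolic cylinders around $Y^*$, with rate depending only on $n$, $\nu$, $\theta$. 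Combining these interior and boundary oscillation bounds by the standard dyadic/Campanato iteration argument yields exactly the pointwise H\"older estimate demanded in part~i) of $\LH$. Part~ii), dealing with the adjoint $\cLt$, is identical because ${}^t\cL$ is again a scalar divergence form parabolic operator satisfying \eqref{eqP-02} and \eqref{eqP-03} with the same $\nu$.

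With $\LH$ now at our disposal (and $R_{max}=R_a$ up to a harmless constant factor, using Remark~\ref{rmk3.5}), Theorem~\ref{thm2} applies verbatim and produces the estimate \eqref{eq3.8yy}. Specializing to $m=1$ and noting that the Gaussian factor in \eqref{eq3.8yy} is already contained in the bound from Corollary~\ref{cor1}, we obtain the claimed inequality with $C=C(n,\nu,\theta)$, $\mu_0=\mu_0(n,\nu,\theta)$ and $\kappa=\kappa(\nu)$.

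The only genuinely non-trivial ingredient is the boundary H\"older estimate for $\tilde u = \chi_{\cU_R^-(X)}u$ on the full parabolic cylinder $Q_{R/2}^-(X)$ (as opposed to merely $\cU_{R/2}^-(X)$), but this is a classical consequence of the measure density condition $\CS$ for scalar equations via De~Giorgi's level set iteration applied to $\tilde u$, whose vanishing outside $\cU$ supplies the necessary measure of ``low values'' needed to start the iteration. Once this is in place, the rest of the proof is a purely formal appeal to the main theorems of Section~\ref{main}.
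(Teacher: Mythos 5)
Your proposal is correct and follows the same route the paper takes: verify that in the scalar case condition $\CS$ yields condition $\LH$ (the paper simply cites the classical boundary De~Giorgi--Nash--Moser estimates in \cite{LSU,Lieberman} and Lemma~\ref{lem:G-06}, where the scalar case corresponds to $\mathscr{E}=0$), then apply Theorem~\ref{thm2} together with Corollary~\ref{cor1}. Your explicit sketch of the De~Giorgi iteration is a reasonable unpacking of what the paper leaves to a citation, and your use of Remark~\ref{rmk3.5} to reconcile $R_{max}$ with $R_a$ is appropriate.
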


\begin{example}
$\bR_{+}^n$ satisfies the condition $\CS$ with $\theta=1/2$ and $R_a=\infty$.
Then we have
\[
\delta(X,Y)= \left(1 \wedge \frac {d_X} {\abs{X-Y}_p} \right) \left(1 \wedge \frac{d_Y} {\abs{X-Y}_p}\right)\quad\text{in }\; \cU=\bR\times \bR^n_+,
\]
and by Corollary~\ref{cor2}, for all $X, Y \in \cU$ with $X\neq Y$, we have
\[
\cG(t,x,s,y) \leq C\,\chi_{(0,\infty)}(t-s)\cdot \delta(X,Y)^{\mu_0} (t-s)^{-n/2}\exp\left(-\kappa \frac{\abs{x-y}^2}{t-s} \right),
\]
where $C=C(n,\nu)$, $\mu_0=\mu_0(n,\nu)$, and $\kappa=\kappa(\nu)$.
\end{example}

\begin{remark}
In Corollary \ref{cor2}, one can allow for lower order terms in $\cL$; c.f. Corollary~\ref{cor4.15} below.
\end{remark}

\subsection{$L^\infty$-perturbation of diagonal systems}        \label{sec:AD}
Let $a^{\alpha\beta}(X)$ be scalar functions satisfying
\begin{equation}            \label{eqP-07}
a^{\alpha\beta}(X)\xi_\beta\xi_\alpha\ge \nu_0\bigabs{\vec \xi}^2,\quad\forall\xi\in\bR^n;\qquad \sum_{\alpha,\beta=1}^n \bigabs{a^{\alpha\beta}(X)}^2\le \nu_0^{-2},
\end{equation}
for all $X \in\bR^{n+1}$ with some constant $\nu_0\in (0,1]$.
Let $\cU=\bR\times\Omega$ with the base $\Omega\subset \bR^n$ satisfying the condition $\CS$.
Let $A^{\alpha\beta}_{ij}$ be the coefficients of the operator $\cL$. We denote
\begin{equation}					\label{eqP-08w}
\mathscr{E}= \sup_{X\in \bR^{n+1}}\left\{
 \sum_{i,j=1}^m \sum_{\alpha,\beta =1}^n \Bigabs{A^{\alpha\beta}_{ij}(X)-a^{\alpha\beta}(X)\delta_{ij}}^2\right\}^{1/2},
\end{equation}
where $\delta_{ij}$ is the usual Kronecker delta symbol.

By Lemma~\ref{lem:G-06}, there exists $\mathscr{E}_0=\mathscr{E}_0(n,\nu_0)$ such that if
$\mathscr{E}<\mathscr{E}_0$, then the condition $\LH$ is satisfied with $\mu_0=\mu_0(n,\nu_0, \theta)$, $R_{max}=R_a$, and $N_1=N_1(n,m,\nu_0, \theta)$.
Therefore, the following corollary is another easy consequence of Theorem~\ref{thm2}.

\begin{corollary}           \label{cor2b}
Assume that $a^{\alpha\beta}(X)$ satisfy the condition \eqref{eqP-07}.
Let $\cU=\bR\times\Omega$, where $\Omega$ satisfies the condition $\CS$, and define $\delta(X,Y)$ as in \eqref{eq3.6mm} with $R_{max}=R_a$.
Let $\mathscr{E}$ be defined as in \eqref{eqP-08w}, where $A^{\alpha\beta}_{ij}(X)$ are the coefficients of the operator $\cL$.
There exists $\mathscr{E}_0=\mathscr{E}_0(n,\nu_0,\theta)$ such that if $\mathscr{E} <\mathscr{E}_0$, then the Green's matrix $\vec \cG(X,Y)$ of $\cL$ in $\cU$ exists and for all $X, Y \in \cU$ with $X\neq Y$, we have
\[
\abs{\vec \cG(t,x,s,y)}\leq C\,\chi_{(0,\infty)}(t-s)\cdot \delta(X,Y)^{\mu_0}  \left\{(t-s)\wedge R_a^2\right\}^{-n/2}\exp\left(-\kappa \frac{\abs{x-y}^2}{t-s} \right),
\]
where $C, \mu_0$, and $\kappa$ are constants depending on $n,m,\nu_0$, and $\theta$.
\end{corollary}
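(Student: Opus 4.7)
The plan is to reduce Corollary \ref{cor2b} to a direct application of Theorem \ref{thm2}. The whole task therefore amounts to verifying that the operator $\cL$ satisfies Condition $\LH$ with parameters depending only on $n,m,\nu_0,\theta$ (and with $R_{max}=R_a$), so that Theorem \ref{thm2} immediately produces the Green's matrix together with the claimed boundary decay.

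First I would invoke Lemma~\ref{lem:G-06} cited immediately before the statement: it gives a threshold $\mathscr{E}_0=\mathscr{E}_0(n,\nu_0,\theta)$ such that whenever $\mathscr{E}<\mathscr{E}_0$, Condition $\LH$ holds for both $\cL$ and $\cLt$ with some H\"older exponent $\mu_0=\mu_0(n,\nu_0,\theta)$, scale $R_{max}=R_a$, and constant $N_1=N_1(n,m,\nu_0,\theta)$. Note that the adjoint $\cLt$ is the same kind of $L^\infty$-perturbation of the transpose diagonal system $\{a^{\beta\alpha}\delta_{ij}\}$ with the same perturbation size $\mathscr{E}$, so a single lemma handles both directions of $\LH$. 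Once $\LH$ is verified, Theorem \ref{thm2} yields existence of $\vec\cG$ and the estimate
\[
\abs{\vec\cG(t,x,s,y)}\le C\,\chi_{(0,\infty)}(t-s)\cdot \delta(X,Y)^{\mu_0}\left\{(t-s)\wedge R_a^2\right\}^{-n/2}\exp\!\left(-\kappa\frac{\abs{x-y}^2}{t-s}\right),
\]
with $\delta(X,Y)$ defined exactly as in \eqref{eq3.6mm} with $R_{max}=R_a$, and with the constants depending only on $n,m,\nu_0$, and $\theta$ as claimed.

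The substantive content is therefore entirely inside Lemma~\ref{lem:G-06}, and the main obstacle is the proof of that lemma. The natural route is a two-step argument. First, for the unperturbed diagonal operator $\cL_0 \vec u=\vec u_t - D_\alpha(a^{\alpha\beta}\delta_{ij}D_\beta \vec u)$, each component decouples into a scalar parabolic equation with bounded measurable coefficients; combining the classical boundary H\"older continuity for scalar parabolic equations (which uses Condition $\CS$ in a crucial way, giving measure-density at boundary points, and produces a H\"older exponent $\mu_0=\mu_0(n,\nu_0,\theta)$) with Campanato-type characterization, one obtains the stronger condition $\LHP$ stated in Remark~\ref{rmk:P-03}. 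Second, one perturbs: writing $\cL=\cL_0+(\cL-\cL_0)$, the extra term has $L^\infty$-norm $\le\mathscr{E}$, and a standard freezing/Caccioppoli argument on cylinders $\cU_\rho^-(X)$ yields an inequality of the form
\[
\int_{\cU_\rho^-(X)}\abs{D\vec u}^2\le C_1\left(\frac{\rho}{r}\right)^{n+2\mu_0}\!\!\int_{\cU_r^-(X)}\abs{D\vec u}^2 + C_2 \mathscr{E}^2 \int_{\cU_r^-(X)}\abs{D\vec u}^2,
\]
after which the iteration lemma of Campanato gives back $\LHP$ (and hence $\LH$ by Lemma~\ref{lem8.1}) with the same exponent $\mu_0$ provided $\mathscr{E}<\mathscr{E}_0$ is small enough to absorb the perturbation term. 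The delicate point is calibrating $\mathscr{E}_0$ so that the iteration still closes with exponent $\mu_0$ independent of $m$ and $\mathscr{E}$, which is exactly why the smallness hypothesis cannot be dropped.

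Once Lemma~\ref{lem:G-06} is in hand, the corollary is essentially automatic: $\LH$ gives $\IH$ and $\LB$ (via Lemma~\ref{lem2.19}) with the announced parameters, Theorem~\ref{thm1} provides existence of $\vec\cG$, and Theorem~\ref{thm2} upgrades the bound \eqref{eq2.17d} to the boundary-decaying form \eqref{eq3.8yy}, completing the proof.
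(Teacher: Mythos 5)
Your proposal is correct and follows exactly the paper's route: the paper proves the corollary in one line by invoking Lemma~\ref{lem:G-06} to get Condition $\LH$ with $R_{max}=R_a$ and constants depending only on $n,m,\nu_0,\theta$, and then applies Theorem~\ref{thm2} (existence having been settled via $\LH \Rightarrow \IH, \LB$ and Theorem~\ref{thm1}). Your sketch of Lemma~\ref{lem:G-06}'s proof is also in the same spirit as the paper's (comparison with the solution of the diagonal system $\cL_0$ having the same lateral boundary data, an energy estimate on the difference of size $O(\mathscr{E}^2)$, and a Campanato-type iteration combined with the interior $\PH$ estimate from \cite{CDK} and Lemma~\ref{lem8.1}), so there is no gap.
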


\begin{example}     \label{ex4.11}
Let $\Omega=\{ x\in \bR^n: x_n>\varphi(x')\}$, where $x=(x',x_n)$ and  $\varphi:\bR^{n-1}\to \bR$ is a Lipschitz function with a Lipschitz constant $K$. Then $\Omega$ satisfies the condition $\CS$ with $\theta=\theta(n,K)$ and $R_a=\infty$ and we have
\[
\delta(X,Y)= \left(1 \wedge \frac {d_X} {\abs{X-Y}_p} \right) \left(1 \wedge \frac{d_Y} {\abs{X-Y}_p}\right)\quad\text{in }\; \cU=\bR\times \Omega.
\]
If $\cL$ is a small $L^\infty$-perturbation of a diagonal system in the sense of Corollary~\ref{cor2b}, then the Green's matrix $\vec \cG(t,x,s,y)$ of $\cL$ in $\cU$  exists, and for all $X, Y \in \cU$ with $X\neq Y$, we have
\[
\abs{\vec \cG(t,x,s,y)}\leq  C\,\chi_{(0,\infty)}(t-s)\cdot \delta(X,Y)^{\mu_0} (t-s)^{-n/2}\exp\left(-\kappa \frac{\abs{x-y}^2}{t-s} \right),
\]
where $C, \mu_0$, and $\kappa$ are constants depending on $n,m,\nu_0$, and $K$.
\end{example}

\subsection{Systems with $\VMO_x$ coefficients}      \label{sec:VMO}
For a measurable function $f=f(X)=f(t,x)$ defined on $\bR^{n+1}$, we set for $\rho>0$
\[
\omega_\rho(f):=\sup_{X\in\bR^{n+1}}\sup_{r \leq \rho} \fint_{t-r^2}^{t+r^2}\!\!\!\fint_{B_r(x)} \bigabs{f(y,s)-\bar f_{x,r}(s)}\,dy\,ds;\quad \bar f_{x,r}(s)=\fint_{B_r(x)}f(s,\cdot).
\]
We say that $f$ belongs to $\VMO_x$ if $\lim_{\rho\to 0} \omega_\rho(f)=0$.
Note that $\VMO_x$ is a strictly larger class than the classical $\VMO$ space.
In particular, $\VMO_x$ contains all functions uniformly continuous in $x$ and measurable in $t$; see \cite{Krylov}.

If  $\cU=\bR\times\Omega$, where the base $\Omega$ is a bounded $C^1$ domain and if the coefficients $\vec A^{\alpha\beta}$ of the operator $\cL$ are functions in $\VMO_x$ satisfying the conditions \eqref{eqP-02} and \eqref{eqP-03}, then the condition $\LH$ is satisfied with parameters $\mu_0$, $N_1$, and $R_{max}$ depending on $\Omega$ and $\omega_\rho(\vec A^{\alpha\beta})$ as well as on $n, m, \nu$. Therefore, we have the following corollary of Theorem~\ref{thm2}.

\begin{corollary}           \label{cor3}
Let $\cU=\bR\times \Omega$, where $\Omega$ is a bounded $C^1$ domain.
Assume that the coefficients $\vec A^{\alpha\beta}$ of $\cL$ belong to $\VMO_x$ and satisfy the conditions \eqref{eqP-02} and \eqref{eqP-03}.
Then, the Green's matrix $\vec \cG(X,Y)$ of $\cL$ in $\cU$ exists and for all $X, Y \in \cU$ with $X\neq Y$ and for all $\mu_0\in (0,1)$, we have
\[
\abs{\vec \cG(t,x,s,y)}\leq C\,\chi_{(0,\infty)}(t-s)\cdot \delta(X,Y)^{\mu_0}  \left\{(t-s)\wedge R_{max}^2\right\}^{-n/2}\exp\left(-\kappa \frac{\abs{x-y}^2}{t-s} \right),
\]
where $\delta(X,Y)$ is defined as in \eqref{eq3.6mm}, and $R_{max}$, $C$, and $\kappa$ are positive constants depending on $\Omega$ and $\omega_\rho(\vec A^{\alpha\beta})$ as well as on $n$, $m$, $\nu$, and $\mu_0$.
\end{corollary}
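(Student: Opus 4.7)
The strategy is to invoke Theorem~\ref{thm2} after verifying that condition $\LH$ holds in this setting with parameters depending on $\Omega$, on the VMO$_x$ moduli $\omega_\rho(\vec A^{\alpha\beta})$, and on the structural constants $n,m,\nu$. Once $\LH$ is in force, the existence of the Green's matrix follows because $\LH$ implies both $\LB$ (Lemma~\ref{lem2.19}) and $\IH$, so Theorem~\ref{thm1} produces $\vec \cG(X,Y)$, and the boundary-decay Gaussian bound of Corollary~\ref{cor3} is then exactly the conclusion of Theorem~\ref{thm2}.

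To verify $\LH$ for a prescribed exponent $\mu_0 \in (0,1)$, I would actually check the stronger condition $\LHP$ of Remark~\ref{rmk:P-03}, which by Lemma~\ref{lem8.1} delivers $\LH$ with the same $\mu_0$. The verification proceeds in three steps. First, the $C^1$-regularity of $\partial\Omega$ allows one to locally flatten the boundary in a fixed-size neighborhood of any boundary point via a $C^1$ diffeomorphism; under this change of variables the transformed coefficients still satisfy \eqref{eqP-02}--\eqref{eqP-03} (with slightly degraded constants) and their VMO$_x$ modulus is controlled by that of $\vec A^{\alpha\beta}$ together with the $C^1$ modulus of $\partial\Omega$, so it still tends to $0$ as $\rho\downarrow 0$. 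The problem is thereby reduced to a half-cylinder $\bR\times\bR^n_+$ with the flat portion of the lateral boundary. Second, on this model geometry invoke the boundary $W^{1,p}$-theory for parabolic systems with VMO$_x$ coefficients: for every $p\in[2,\infty)$, weak solutions of $\cL\vec u=0$ (or $\cLt\vec u=0$) vanishing on the flat Dirichlet piece satisfy
\[
\left(\fint_{\cU_{r/2}^{\pm}(X)} \abs{D\vec u}^{p}\right)^{1/p} \le C\left(\fint_{\cU_{r}^{\pm}(X)} \abs{D\vec u}^{2}\right)^{1/2},
\]
provided $r$ is smaller than a threshold determined by $\omega_\rho(\vec A^{\alpha\beta})$. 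Third, combining this higher integrability with H\"older's inequality gives
\[
\int_{\cU_\rho^{\pm}(X)} \abs{D\vec u}^{2} \le C \left(\frac{\rho}{r}\right)^{n+2-2n/p} \int_{\cU_r^{\pm}(X)} \abs{D\vec u}^{2},\qquad 0<\rho<r,
\]
so choosing $p$ large enough that $1 - n/p > \mu_0$ yields exactly the decay required by $\LHP$. The scale $R_{max}$ at which this estimate is valid is determined by the $C^1$ character of $\partial\Omega$ and by $\omega_\rho(\vec A^{\alpha\beta})$, as asserted.

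The main obstacle is the bookkeeping involved in the flattening and perturbation step: one must ensure that after straightening the boundary, the transformed coefficients retain a VMO$_x$ modulus comparable to the original one, so that the boundary $L^p$-theory applies, and one must track how the threshold radius depends on $\omega_\rho(\vec A^{\alpha\beta})$ and on $\Omega$. Once the local $L^p$-estimate for $D\vec u$ is established, the Campanato-type inequality $\LHP$ follows by elementary manipulations, and the corollary is immediate from Theorem~\ref{thm2}. Note that because the exponent in the $W^{1,p}$-estimate can be taken arbitrarily large, the resulting $\mu_0$ can be taken arbitrarily close to $1$, in agreement with the statement of Corollary~\ref{cor3}.
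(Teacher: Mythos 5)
Your proposal is correct and follows essentially the same approach as the paper: verify condition $\LH$ for parabolic systems with $\VMO_x$ coefficients in a bounded $C^1$ domain, then apply Theorem~\ref{thm2}. The paper itself leaves the verification of $\LH$ implicit, pointing to \cite{DK09b}, and your route through $\LHP$ via boundary flattening, $W^{1,p}$-theory for $\VMO_x$ coefficients, and H\"older's inequality is the natural way to fill that in, consistent with Remark~\ref{rmk:P-03} and Lemma~\ref{lem8.1}. One arithmetic slip: since parabolic cylinders have measure $\sim\rho^{n+2}$, the exponent in your Morrey-type estimate should be $(n+2)(1-2/p)=n+2-2(n+2)/p$ rather than $n+2-2n/p$, and the threshold should read $1-(n+2)/p>\mu_0$; this does not affect the conclusion that $\mu_0$ may be taken arbitrarily close to $1$ by choosing $p$ large.
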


In the above corollary, one may assume that $\vec A^{\alpha\beta}$ satisfy the weaker Legendre-Hadamard condition and may even include lower order terms in the operator. More precisely, let
\begin{equation}            \label{eq4.15ip}
\cL_\lambda \vec u = \vec u_t -D_\alpha (\vec{A}^{\alpha\beta} D_\beta \vec u)+D_\alpha(\vec{B}^\alpha \vec u)+\hat{\vec{B}}{}^\alpha D_\alpha \vec u+ \vec{C} \vec u + \lambda \vec u,
\end{equation}
where $\vec A^{\alpha\beta}, \vec B^\alpha, \hat{\vec B}{}^\alpha$, and $\vec C$ are $m\times m$ matrix valued functions on $\bR^{n+1}$ satisfying
\begin{equation}            \label{eq4.16ys}
\left\{\,\,
\begin{aligned}
A^{\alpha\beta}_{ij}(X)\xi^j \xi^i \eta_\beta \eta_\alpha \ge \nu \bigabs{\vec \xi}^2 \bigabs{\vec \eta}^2,\quad \forall\vec\xi\in \bR^m,\,\,\,\forall \vec\eta\in\bR^n, \,\,\,\forall X\in\bR^{n+1};\\
\sum_{\alpha, \beta=1}^n\bignorm{\vec A^{\alpha\beta}}_{L^\infty}^2 \leq \nu^{-2};\quad \sum_{\alpha=1}^n \left(\bignorm{\vec B^\alpha}_{L^\infty}^2 + \bignorm{\hat{\vec B}{}^\alpha}_{L^\infty}^2\right)+\bignorm{\vec C}_{L^\infty}^2\leq \nu^{-2},
\end{aligned}
\right.
\end{equation}
for some constant $\nu\in (0,1]$, and $\lambda$ is a scalar constant.

\begin{corollary}           \label{cor4.15}
Assume $\cU=\bR\times \Omega$, where $\Omega$ is a bounded $C^1$ domain.
Let the operator $\cL_\lambda$ be as in \eqref{eq4.15ip} with coefficients satisfying the condition \eqref{eq4.16ys}.
We assume further that the leading coefficients $\vec A^{\alpha\beta}$ belong to $\VMO_x$.
There exists $\lambda_0\geq 0$ such that if $\lambda> \lambda_0$, then the Green's matrix $\vec \cG(X,Y)$ of $\cL_\lambda$ in $\cU$ exists and for all $X, Y \in \cU$ with $X\neq Y$ and for all $\mu_0\in (0,1)$, we have
\[
\abs{\vec \cG(t,x,s,y)}\leq C\,\chi_{(0,\infty)}(t-s)\cdot \delta(X,Y)^{\mu_0}  \left\{(t-s)\wedge R_{max}^2\right\}^{-n/2}\exp\left(-\kappa \frac{\abs{x-y}^2}{t-s} \right),
\]
where $\delta(X,Y)$ is defined as in \eqref{eq3.6mm}, and $R_{max}$, $C$, and $\kappa$ are positive constants depending on $\Omega$, $\omega_\rho(\vec A^{\alpha\beta})$, and $\lambda$ as well as on $n$, $m$, $\nu$, and $\mu_0$.
\end{corollary}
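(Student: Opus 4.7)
The plan is to apply Theorem~\ref{thm2}, in the form extended to operators with lower-order terms, to $\cL_\lambda$, after verifying that the condition $\LH$ holds for $\cL_\lambda$ and its adjoint $\cLt_\lambda$ in $\cU=\bR\times\Omega$. The new features compared with Corollary~\ref{cor3} are that the leading coefficients satisfy only the Legendre-Hadamard condition, that $\cL_\lambda$ contains first- and zeroth-order terms, and that the bilinear form is made coercive only after adding $\lambda \vec u$ with $\lambda>\lambda_0$ sufficiently large.

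The first step is to observe that the proofs of Theorems~\ref{thm1} and~\ref{thm2} in Section~\ref{sec:p}, which rest on solvability of the Cauchy-Dirichlet problem, energy estimates, Davies-type off-diagonal bounds, and the local regularity condition $\LH$, extend with routine modifications to $\cL_\lambda$. Under the Legendre-Hadamard hypothesis in \eqref{eq4.16ys} together with the $\VMO_x$ regularity of $\vec A^{\alpha\beta}$, a G\r{a}rding inequality of the form
\[
\int_\cU A^{\alpha\beta}_{ij}\,D_\beta u^j\, D_\alpha u^i \,dX \ge \tfrac{\nu}{2}\int_\cU \abs{D\vec u}^2 \,dX - C_*\int_\cU \abs{\vec u}^2 \,dX
\]
holds for all $\vec u \in \rV_2^{0,1}(\cU)$ with $C_* = C_*(n,m,\nu)$, and the lower-order coefficients in \eqref{eq4.15ip} can be absorbed via Cauchy-Schwarz and Young's inequality at the cost of enlarging $C_*$. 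Choosing $\lambda_0$ above this enlarged $C_*$ restores the coercivity needed for the energy method and for the Nash/Davies arguments underpinning Theorems~\ref{thm1} and~\ref{thm2}; the extra factor $e^{-\lambda(t-s)}$ arising from the $\lambda\vec u$ term only improves the bound and can be absorbed into the constant $C$.

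The second step is to verify $\LH$ for $\cL_\lambda$, and symmetrically for $\cLt_\lambda$. Flattening $\partial\Omega$ locally by a $C^1$ diffeomorphism turns $\cL_\lambda$ into an operator on a half-space whose leading coefficients remain in $\VMO_x$ and still satisfy the Legendre-Hadamard condition with a comparable ellipticity constant, and whose lower-order coefficients remain $L^\infty$-bounded. The Krylov/Dong-Kim boundary $W^{0,1}_p$-theory for parabolic systems with $\VMO_x$ leading coefficients then yields, for any $p \in (2,\infty)$ and any weak solution $\vec u$ of $\cL_\lambda \vec u = 0$ in $\cU_R^-(X)$ vanishing on $\cS_R^-(X)$, an estimate of the form
\[
\left(\fint_{\cU_{R/2}^-(X)} \abs{D\vec u}^p\right)^{1/p} \le C \left(\fint_{\cU_R^-(X)} \abs{D\vec u}^2\right)^{1/2},
\]
with $C$ depending on $n,m,\nu,\lambda$, on the modulus $\omega_\rho(\vec A^{\alpha\beta})$, and on the $C^1$ character of $\partial\Omega$. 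Combining this with a Poincar\'e-type inequality applied to the zero-extension $\tilde{\vec u}$, and invoking the Campanato characterization of H\"older continuity, then yields the required H\"older estimate in $\LH$ for any prescribed $\mu_0 \in (0,1)$.

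The expected main obstacle is the bookkeeping required to check that every step in the proof of Theorem~\ref{thm2}---in particular the off-diagonal $L^2$-$L^\infty$ bounds and the Davies exponential perturbation---goes through uniformly under the weaker Legendre-Hadamard hypothesis and in the presence of the lower-order coefficients. This is not conceptually new but does require careful tracking of how constants depend on $\lambda$, on $\nu$, and on the $L^\infty$-norms of $\vec B^\alpha$, $\hat{\vec B}{}^\alpha$, and $\vec C$; in particular, the threshold $\lambda_0$ should end up depending only on $n$, $m$, and $\nu$.
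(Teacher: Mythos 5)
Your proposal follows essentially the same route as the paper's (admittedly brief) sketch of proof: establish solvability of the Cauchy--Dirichlet problem for $\cL_\lambda$ with $\lambda$ sufficiently large, construct the (averaged) Green's matrix as in \cite[\S4]{CDK}, verify the condition $\LH$ by appealing to the $W^{0,1}_p$-theory for parabolic systems with $\VMO_x$ coefficients from \cite{DK09b}, and then rerun the proofs of Theorems~\ref{thm1} and \ref{thm2}; your extra detail (G\r{a}rding inequality, boundary flattening, Poincar\'e plus Campanato) is exactly the content of the cited references, so this is a fleshed-out version of the paper's argument rather than a different one.

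One imprecision worth flagging: under the Legendre--Hadamard condition with merely $\VMO_x$ leading coefficients, the G\r{a}rding constant $C_*$ (and hence $\lambda_0$) cannot depend on $n$, $m$, $\nu$ alone; for bounded measurable Legendre--Hadamard coefficients the inequality can actually fail, and the freezing-of-coefficients argument that rescues it with $\VMO_x$ data produces a $C_*$ depending also on the modulus $\omega_\rho(\vec A^{\alpha\beta})$ and on $\Omega$. This does not affect the validity of the corollary as stated, since it only asserts existence of some $\lambda_0\ge 0$, but the closing remark about the dependence of $\lambda_0$ should be corrected.
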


We give a sketch of proof for Corollary~\ref{cor4.15}. First we note that for sufficiently large $\lambda$, one has the solvability of the following problem in the function space $\rV^{0,1}_2((s,\infty)\times\Omega)^m$:
\[
\left\{\begin{array}{l l}
\cL_\lambda \vec u=\vec f\\
\vec u(s,\cdot)= 0,\end{array}\right.
\]
where $\vec f\in L^\infty_c(\cU)$.
In particular, one can construct the averaged Green's matrix $\vec \cG^\rho(X,Y)$ of $\cL_\lambda$ in $\cU$ by following the argument in \cite[\S 4]{CDK}.
We also note that the condition $\LH$ is satisfied in this case; see e.g., \cite{DK09b}.
Then by modifying the proofs of Theorem~\ref{thm1} and \ref{thm2}, one can prove the above corollary. The details are left the the reader.

\begin{remark}
In Corollary \ref{cor3} and Corollary \ref{cor4.15}, the conditions of $\Omega$ and $\vec A^{\alpha\beta}$ can be relaxed. We may assume that $\Omega$ is a bounded Lipschitz domain with a sufficiently small Lipschitz constant, and $\omega_\rho(\vec A^{\alpha\beta})$ is also sufficiently small for some $\rho>0$;  see e.g., \cite{DK09b}.
\end{remark}

\section{Proofs of Main Theorems}							\label{sec:p}

\subsection{Proof of Theorem~\ref{thm1}}
By Lemma~\ref{lem8.0a} and \cite[Theorem~2.7]{CDK}, the condition $\IH$ implies the existence of the Green's matrix $\vec \cG(X,Y)$ of $\cL$ in $\cU$.
In fact, we point out that in the proof of \cite[Theorem~2.7]{CDK}, one can completely replace the property $\PH$ by the condition $\IH$, the latter of which is weaker.
This observation is useful because in the presence of lower order terms in the operator $\cL$, the property $\PH$ does not follow from the condition $\IH$.
Notice from \cite[Theorem~2.7]{CDK} that we have
\begin{equation*}					
\vec \cG(t,x,s,y)=0\quad\text{if }\,t<s.
\end{equation*}

Therefore, to prove estimate \eqref{eq2.17d}, we only need to consider the case when $t>s$.
To derive the estimate \eqref{eq2.17d}, we modify the proof in \cite[\S5.1]{CDK}, which was based on that in \cite{HK04}.
We mention that the method in \cite{HK04} was in turn based on the ideas appeared in \cite{Davies} and \cite{FS}.

Let $\psi$ be a bounded Lipschitz function on $\mathbb{R}^n$ satisfying $\abs{D \psi} \leq \gamma$ a.e. for some $\gamma>0$ to be chosen later.
For $t>s$, we define an operator $P^\psi_{s\to t}$ on $L^2(\Omega)^m$ as follows:
For a given $\vec f\in L^2(\Omega)^m$, let $\vec u$ be the unique weak solution in $\rV^{0,1}_2((s,\infty)\times\Omega)^m$ of the problem (see \cite[Lemma~2.1]{CDK})
\begin{equation} 		\label{eq3.6.1}
\left\{\begin{array}{l l}
\cL \vec u=0,\\
\vec u(s,\cdot)= e^{-\psi}\vec f
\end{array}\right.
\end{equation}
and define $P^\psi_{s\to t} \vec f(x):= e^{\psi(x)}\vec u(t,x)$.
Then, by \cite[Theorem~2.7]{CDK}, we find
\begin{equation}  \label{eq3.60.3}
P^\psi_{s\to t}\vec f(x)=
e^{\psi(x)}\int_{\Omega}\vec \cG(t,x,s,y)e^{-\psi(y)}\vec f(y)\,dy, \quad\forall \vec f\in L^2(\Omega)^m.
\end{equation}

Then, as in \cite[\S5.1]{CDK}, we derive
\begin{equation} \label{eq3.70}
\bignorm{P^\psi_{s\to t}\vec f}_{L^2(\Omega)} \leq e^{\vartheta\gamma^2(t-s)}\bignorm{\vec f}_{L^2(\Omega)}, \quad \forall t>s,
\end{equation}
where $\vartheta=\nu^{-3}$.
We set $\rho= \sqrt{t-s} \wedge R_{max}$ and use the condition $\LB$ to estimate
\begin{align*}
e^{-2\psi(x)}\bigabs{P^\psi_{s\to t}\vec f(x)}^2  &= \bigabs{\vec u(t,x)}^2\\
&\leq N_0^2 \rho^{-(n+2)} \int_{t-\rho^2}^t \int_{\Omega_{\rho}(x)}\bigabs{\vec u(\tau,y)}^2\,dy\,d\tau
\\
& \le N_0^2 \rho^{-(n+2)} \int_{t-\rho^2}^t \int_{\Omega_{\rho}(x)}e^{-2\psi(y)} \bigabs{P^\psi_{s\to\tau}\vec f(y)}^2\,dy\,d\tau,
\end{align*}
where  $\Omega_\rho(x):=\Omega\cap B_\rho(x)$.
Thus, by using \eqref{eq3.70}, we derive
\begin{align*}
\bigabs{P^\psi_{s\to t}\vec f(x)}^2 &\le N_0^2 \rho^{-n-2} \int_{t-\rho^2}^t \int_{\Omega_{\rho}(x)}e^{2\psi(x)-2\psi(y)} \bigabs{P^\psi_{s\to\tau}\vec f(y)}^2\,dy\,d\tau\\
& \le N_0^2 \rho^{-n-2} \int_{t-\rho^2}^t \int_{\Omega_{\rho}(x)}e^{2 \gamma \rho }
\bigabs{P^\psi_{s\to\tau}\vec f(y)}^2\,dy\,d\tau\\
& \le N_0^2 \rho^{-n-2} \, e^{2\gamma \rho} \int_{t-\rho^2}^t e^{2\vartheta\gamma^2(\tau-s)}\norm{\vec f}_{L^2(\Omega)}^2\,d\tau\\
& \le N_0^2 \rho^{-n}\,e^{2\gamma\rho+2\vartheta\gamma^2(t-s)}\norm{\vec f}_{L^2(\Omega)}^2.
\end{align*}
We have thus obtained the following $L^2\to L^\infty$ estimate for $P^\psi_{s\to t}$:
\begin{equation} \label{eq3.70.3}
\bignorm{P^\psi_{s\to t}\vec f}_{L^\infty( \Omega)} \leq  N_0 \rho^{-n/2}\,e^{\gamma\rho+\vartheta\gamma^2(t-s)} \bignorm{\vec f}_{L^2(\Omega)}.
\end{equation}

We also define the operator $Q^\psi_{t\to s}$ on $L^2(\Omega)^m$ for $s<t$ by setting $Q^\psi_{t\to s}\vec g(y)= e^{-\psi(y)}\vec v(s,y)$, where $\vec v$ is the unique weak solution in $\rV^{0,1}_2((-\infty,t)\times \Omega)^m$ of the backward problem
\begin{equation} \label{eq3.6.11}
\left\{\begin{array}{l l}
\cLt \vec v=0,\\
\vec v(t,\cdot)= e^{\psi}\vec g.
\end{array}\right.
\end{equation}
By a similar calculation that leads to \eqref{eq3.70.3}, we obtain
\begin{equation} \label{eq3.73.3}
\bignorm{Q^\psi_{t\to s}\vec g}_{L^\infty(\Omega)} \leq  N_0 \rho^{-n/2}\,e^{\gamma\rho+\vartheta\gamma^2(t-s)} \bignorm{\vec g}_{L^2(\Omega)}.
\end{equation}
Notice that it follows from \eqref{eq3.6.1} and \eqref{eq3.6.11} that
\[
\int_{\Omega}\bigl(P^\psi_{s\to t}\vec f\bigr) \cdot \vec g= \int_{\Omega}\vec f\cdot \bigl(Q^\psi_{t\to s} \vec g\bigr).
\]
Therefore, by duality, \eqref{eq3.73.3} implies that for all $\vec f, \vec g \in L^\infty_c(\Omega)^m$,  we have
\begin{equation} \label{eq3.74.3}
\bignorm{P^\psi_{s\to t}\vec f}_{L^2(\Omega)} \leq  N_0 \rho^{-n/2}\,e^{\gamma\rho+\vartheta\gamma^2(t-s)} \bignorm{\vec f}_{L^1(\Omega)}.
\end{equation}
Now,  set $r=(s+t)/2$ and observe that by the uniqueness, we have
\[
P^\psi_{s\to t}\vec f= P^\psi_{r\to t}(P^\psi_{s\to r}\vec f),\quad \forall \vec f\in L^\infty_c(\Omega)^m.
\]
Then, by noting that $t-r=r-s=(t-s)/2$ and $\rho/\sqrt{2}\le \sqrt{t-r}\wedge R_{\max}\le\rho$, we obtain from \eqref{eq3.70.3} and \eqref{eq3.74.3} that
\[
\bignorm{P^\psi_{s\to t}\vec f}_{L^\infty(\Omega)} \leq C \rho^{-n}\,e^{ 2\gamma\rho+\vartheta\gamma^2(t-s)} \bignorm{\vec f}_{L^1(\Omega)}, \quad \forall \vec f\in L^\infty_c(\Omega)^m,
\]
where $C=2^{n/2}N_0^2$.
For all $x, y\in \Omega$ with $x\neq y$, the above estimate combined with \eqref{eq3.60.3} yields, by duality, that
\begin{equation} \label{eq3.83}
e^{\psi(x)-\psi(y)}\abs{\vec \cG(t,x,s,y)} \leq  C \rho^{-n}\,e^{ 2\gamma\rho+\vartheta\gamma^2(t-s)}.
\end{equation}

Let $\psi(z):=\gamma\psi_0(\abs{z-y})$, where $\psi_0$ is defined on $[0,\infty)$ by
\[
\psi_0(r)=\begin{cases}r&\text{if $r \leq \abs{x-y}$} \\
\abs{x-y}&\text{if  $r>\abs{x-y}$}.
\end{cases}
\]
Then, $\psi$ is a bounded Lipschitz function satisfying $\abs{D\psi} \leq \gamma$ a.e.
Take $\gamma=\abs{x-y}/2\vartheta(t-s)$ and set $\xi:=\abs{x-y}/\sqrt{t-s}$.
By \eqref{eq3.83} and the obvious inequality $\rho/\sqrt{t-s}\leq 1$, we have
\[
\abs{\vec \cG (t,x,s,y)}\leq  C\rho^{-n}\, \exp\{\xi/\vartheta-\xi^2/4\vartheta\}.
\]
Let $N=N(\vartheta)=N(\nu)$ be chosen so that
\[
\exp(\xi/\vartheta-\xi^2/4\vartheta)\le N\exp(-\xi^2/8\vartheta),\quad\forall \xi\in [0,\infty).
\]
If we set $\kappa=1/8\vartheta=\nu^3/8$, then we obtain
\[
\abs{\vec \cG(t,x,s,y)}\le C\rho^{-n}\exp\left\{-\kappa\abs{x-y}^2/(t-s)\right\}.
\]
where $C=C(n,m,\nu,N_0)>0$.  We have thus proved the estimate \eqref{eq2.17d}.
\hfill\qedsymbol


\subsection{Proof of Theorem~\ref{thm1c}}
As mentioned in the proof of Theorem~\ref{thm1}, the condition $\IH$ implies the existence of the Green's matrix $\vec \cG(X,Y)$ of $\cL$ in $\cU$.
Moreover, the Green's matrix ${}^t \vec \cG(X,Y)$ of $\cLt$ in $\cU$ also exists and we have the following identity:
\begin{equation}		\label{eq2.20f}
\vec \cG(X,Y)={}^t\vec \cG(Y,X)^T,\quad \forall X,Y\in \cU,\quad X\neq Y.
\end{equation}

With aid of the above observation, we shall prove below that \eqref{eq2.17dd} implies part  ii) in the condition $\LB$. Thanks to \eqref{eq2.20f}, the proof that \eqref{eq2.17dd} implies part i) in $\LB$ is similar and shall be omitted.
Notice that \eqref{eq2.17dd} implies, via straightforward computation, that
\begin{equation}					\label{eq5.18}
\abs{\vec \cG(X,Y)} \leq C \abs{X-Y}_p^{-n},\quad \text{if }\, 0<\abs{t-s} <R_{max}^2,
\end{equation}
where $C=C(n,m,\nu,C_0, \kappa)$.
Then, by the energy inequality (see \cite[Eq. (3.21)]{CDK}) and \eqref{eqn:2.2}, and observing the obvious fact that $R_{max}/2$ and $R_{max}$ are comparable to each other in the case when $R_{max}<\infty$, we obtain
\begin{equation} \label{eq5.15}
\norm{\vec \cG(\cdot,Y)}_{L^{2+4/n}(\cU\setminus \overline Q_r(Y))}+\tri{\vec \cG(\cdot,Y)}_{\cU\setminus \overline Q_r(Y)} \leq Cr^{-n/2}, \quad \forall r \in (0,R_{max}),
\end{equation}
where $C=C(n,m,\nu,C_0,\kappa)$.

Let $X\in\cU$ and $R\in (0,R_{max})$ be given.
Without loss of generality, we may assume $X=0$ and for simplicity of notation, we shall write $\cU_R^\pm=\cU^\pm_R(0)$, etc.

Assume that $\vec u$  is a weak solution of $\cLt\vec u=0$ in $\cU^+_R$ vanishing on $\cS^+_R$.
Let $\vec w=\zeta \vec u$, where $\zeta$ is a smooth cut-off function on $\bR^{n+1}$ satisfying
\begin{equation}				\label{eq5.9z}
0\leq \zeta \leq 1,\,\,\, \supp \zeta \subset Q_{R/2},\,\,\,\zeta \equiv 1\,\text{ on }\, Q_{3R/8},\,\,\, \abs{D\zeta} \le 16/R,\,\,\, \text{and}\,\,\, \abs{\zeta_t} \le 16/R^2 .
\end{equation}
Then $\vec w$ is a weak solution of
\begin{equation*}				
\cLt \vec w= - \zeta_t \vec u - {}^t\!\vec A^{\alpha\beta} D_\alpha \zeta D_\beta \vec u - D_\alpha({}^t\!\vec A^{\alpha\beta} D_\beta \zeta \vec u)
\end{equation*}
in $\cU^+:=\bR_+\times\Omega$. Notice that $\vec w$ vanishes on $\bR_+\times \partial\Omega$ and $\vec w\equiv 0$ in $(R^2/4,\infty)\times \Omega$.

For $Y \in \cU_{R/4}^+$, let $\rho>0$ be such that $Q_\rho^-(Y)\subset \cU_R^+$.
For $k=1,\ldots, m$,  let $\vec v_\rho$ be the $k$-th column of $\vec \cG^\rho(\cdot,Y)$, where $\vec \cG^\rho(\cdot,Y)$ is the averaged Green's matrix of $\cL$ in $\cU$ as constructed in \cite[\S4.1]{CDK}.
Then, similar to \cite[Eq. (3.8)]{CDK}, we have
\begin{align}					\label{eq5.11b}
\fint_{\cU_\rho^-(Y)} \zeta u^k & = -\int_{\cU_{R/2}^+}\zeta_t \vec u\cdot\vec v_\rho-
\int_{\cU_{R/2}^+}({}^t\!\vec A^{\alpha\beta} D_\alpha \zeta D_\beta \vec u)\cdot\vec v_\rho
+\int_{\cU_{R/2}^+} ({}^t\!\vec A^{\alpha\beta} D_\beta \zeta \vec u)\cdot D_\alpha \vec v_\rho \\
\nonumber
&=:I_1+I_2+I_3,
\end{align}
which simply means that
\[ \int_{\cU^+} \vec w\cdot \cL \vec v_\rho=\int_{\cU^+} \cLt \vec w \cdot \vec v_\rho.\]
Notice from \eqref{eq5.9z} that $\abs{Y-Z}_p> R/8$ for $Z\in \cU_R^+\cap \supp D\zeta$, and recall ${}^t\!\vec A^{\alpha\beta}=(\vec A^{\beta\alpha})^T$.
Thus, if we set $r=R/8\wedge (d_Y \wedge R_c)$, then after interchanging indices $\alpha$ and $\beta$, we find that
\[
I_2+I_3= - \int_{\cU_{R/2}^+ \setminus Q_r(Y)} A^{\alpha\beta}_{ij}  \cG^\rho_{jk}(\cdot,Y)D_\alpha u^i D_\beta\zeta+\int_{\cU_{R/2}^+\setminus Q_r(Y)} A^{\alpha\beta}_{ij} D_\beta \cG^\rho_{jk}(\cdot,Y)u^i D_\alpha \zeta.
\]

Then, in light of \cite[Eq. (4.15)]{CDK} and \cite[Eq. (4.16)]{CDK}, we  take limits $\rho$ to zero in \eqref{eq5.11b} and conclude that for a.e. $Y\in \cU^+_{R/4}$, we have
\begin{multline}				\label{eq4.28v}
u^k(Y) = - \int_{\cU_{R/2}^+}  \zeta_t  u^j  \cG_{jk}(\cdot,Y)- \int_{\cU_{R/2}^+} A^{\alpha\beta}_{ij}  \cG_{jk}(\cdot,Y)D_\alpha u^i D_\beta\zeta\\
+\int_{\cU_{R/2}^+} A^{\alpha\beta}_{ij} D_\beta  \cG_{jk}(\cdot,Y)u^iD_\alpha \zeta
=:  I_1'+ I_2'+ I_3'.
\end{multline}
Let $\cA_R(Y):=\cU_{3R/4}(Y)\setminus Q_{R/8}(Y) \supset \cU^+_{R/2}\setminus Q_{R/8}(Y)$.
By \eqref{eq5.9z}, H\"older's inequality, and \eqref{eq5.15}, we estimate
\begin{align*}
\bigabs{I_1'}  & \leq C R^{-2} \norm{\vec \cG(\cdot,Y)}_{L^2(\cA_R(Y))}\, \norm{\vec u}_{L^2(\cU^+_{R/2})} \leq C R^{-(n+2)/2} \norm{\vec u}_{L^2(\cU^+_{R})}.\\
\intertext{Similarly, by \eqref{eq5.9z} and \eqref{eq5.15}, we obtain}
\bigabs{I_3'}  & \leq C R^{-1} \norm{D\vec \cG(\cdot,Y)}_{L^2(\cA_R(Y))}\, \norm{\vec u}_{L^2(\cU^+_{R/2})} \leq C R^{-(n+2)/2} \norm{\vec u}_{L^2(\cU^+_{R})}.
\end{align*}

Notice that  the energy inequality (see e.g., \cite[\S III.2]{LSU}) yields
\begin{equation}				\label{eq5.17tc}
\norm{D \vec u}_{L^2(\cU_{R/2}^+)} \leq C R^{-1}\norm{\vec u}_{L^2(\cU_{R}^+)}.
\end{equation}
By \eqref{eq5.9z}, \eqref{eq5.18}, and \eqref{eq5.17tc}, we estimate
\[
\bigabs{I_2'}  \leq C R^{-1} \norm{\vec \cG(\cdot,Y)}_{L^2(\cA_R(Y))}\, \norm{D \vec u}_{L^2(\cU^+_{R/2})} \leq  C R^{-(n+2)/2} \norm{\vec u}_{L^2(\cU^+_{R})}.
\]

By combining above estimates for $I_1', I_2'$, and $I_3'$, we conclude from \eqref{eq4.28v} that
\[
\norm{\vec u}_{L^\infty(\cU^+_{R/4})} \leq C R^{-(n+2)/2} \norm{\vec u}_{L^2(\cU^+_R)},
\]
where $C=C(n,m,\nu,C_0, \kappa)$.
Since the above estimate holds for all $X\in\cU$ and $R\in(0,R_{max})$, we obtain $\LB$ by a standard covering argument.
\hfill\qedsymbol


\subsection{Proof of Theorem~\ref{thm2}}
Notice that by Lemma~\ref{lem2.19} and Theorem~\ref{thm1}, we have
\begin{equation}			\label{eq2.25t}
\abs{\vec \cG(t,x,s,y)}\leq C_0\, \chi_{(0,\infty)}(t-s) \cdot \left\{(t-s)\wedge R_{max}^2\right\}^{-n/2}\exp\left\{-\kappa\abs{x-y}^2/(t-s)\right\},
\end{equation}
where $C_0=C_0(n,m,\nu, \mu_0,N_1)$.
We denote
\[
\delta_1(X,Y)= \left(1 \wedge \frac {d_X} {R_{max}\wedge \abs{X-Y}_p} \right)\quad\text{and}\quad
\delta_2(X,Y)= \left(1 \wedge \frac{d_Y} {R_{max}\wedge \abs{X-Y}_p}\right)
\]
so that $\delta(X,Y)=\delta_1(X,Y)\cdot \delta_2(X,Y)$.
To prove the estimate \eqref{eq3.8yy}, we first claim that
\begin{equation}			\label{eq22.00k}
\abs{\vec \cG(t,x,s,y)}\leq C\chi_{(0,\infty)}(t-s) \cdot \delta_1(X,Y)^{\mu_0} \left\{(t-s)\wedge R_{max}^2\right\}^{-n/2}\exp\left\{-\frac{\kappa \abs{x-y}^2}{4(t-s)}\right\},
\end{equation}
where $C=C(n,m,\nu, \mu_0,N_1)$.
The following lemma is a key to prove the above claim.

\begin{lemma}			\label{lem3.6}
Let $\cU=\bR\times\Omega$ and assume the condition $\LH$.
For $R \in (0,R_{max})$ and $X\in\cU$ such that $d_X< R/2$,  let $\vec u$ be a weak solution of $\cL\vec u=0$ in $\cU_R^-(X)$ vanishing on $\cS_R^-(X)$.
Then, we have
\begin{equation}			\label{eq3.7m}
\abs{\vec u(X)} \le C d_X^{\mu_0} R^{-n/2-1-\mu_0}\norm{\vec u}_{L^2(\cU_R^-(X))},
\end{equation}
where $C=C(n,m,\nu, \mu_0, N_1)$.
\end{lemma}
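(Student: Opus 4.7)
The plan is to apply condition $\LH$ directly at $X$ with radius $R$, and to exploit the fact that the truncated function $\tilde{\vec u} := \chi_{\cU_R^-(X)}\vec u$ has a Hölder continuous extension to $\overline{Q_{R/2}^-(X)}$ which vanishes at a nearest boundary point of $X$; this converts the $L^2$-average on the right-hand side of $\LH$ into a pointwise bound on $\abs{\vec u(X)}$ weighted by $d_X^{\mu_0}$.

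Concretely, since $\partial\Omega$ is closed in $\bR^n$ and $d_X$ is finite, I first pick $x^* \in \partial\Omega$ realizing $\abs{x_X - x^*} = d_X$, and set $X^* := (t_X, x^*) \in \partial_p\cU$, so that $\abs{X - X^*}_p = d_X < R/2$ and $X, X^* \in \overline{Q_{R/2}^-(X)}$. Next, since $\vec u$ is a weak solution of $\cL\vec u = 0$ in $\cU_R^-(X)$ vanishing on $\cS_R^-(X)$ and $R < R_{max}$, applying $\LH$ at $X$ produces
\[
[\tilde{\vec u}]_{\mu_0/2,\mu_0; Q_{R/2}^-(X)} \le N_1 R^{-\mu_0}\left( \fint_{Q_R^-(X)} \abs{\tilde{\vec u}}^2 \right)^{1/2},
\]
and in particular a Hölder continuous representative of $\tilde{\vec u}$ on $\overline{Q_{R/2}^-(X)}$. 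This representative takes the value $\vec u(X)$ at $X$ (approach through interior points of $\cU_{R/2}^-(X)$, where $\tilde{\vec u} = \vec u$ is continuous by the $\IH$ consequence of $\LH$) and the value $0$ at $X^*$ (approach through $(t_k, x^*) \in \cS_{R/2}^-(X)$ with $t_k \nearrow t_X$, on which $\tilde{\vec u} \equiv 0$ by construction). Applying the Hölder bound to the pair $(X, X^*)$ and using $\norm{\tilde{\vec u}}_{L^2(Q_R^-(X))} = \norm{\vec u}_{L^2(\cU_R^-(X))}$ together with $\abs{Q_R^-(X)} \sim R^{n+2}$ then yields \eqref{eq3.7m}.

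The main subtle point I expect is the identification of the Hölder continuous representative of $\tilde{\vec u}$ at $X^*$ as $0$: a priori $\tilde{\vec u}$ is defined only pointwise (as the product of $\vec u$ with an indicator), so its pointwise values need not agree with those of the continuous representative. I plan to handle this by approaching $X^*$ specifically along the lateral boundary sequence $(t_k, x^*) \in \cS_{R/2}^-(X)$, which lies entirely outside $\cU$ and therefore has $\tilde{\vec u} \equiv 0$ along it. Once this is settled, the rest of the proof is routine bookkeeping with cylinder volumes and the identification of $L^2$ norms.
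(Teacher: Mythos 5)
Your proof is correct and takes essentially the same route as the paper: apply $\LH$ to obtain a Hölder estimate for $\tilde{\vec u}$ on $Q_{R/2}^-(X)$, exploit that $\tilde{\vec u}$ vanishes outside $\cU$ to anchor the estimate at a nearest boundary point at parabolic distance $d_X$, and unwind the averaged $L^2$ norm. The paper handles the boundary-of-cylinder subtlety by comparing $X$ against points $X'\in Q_{R/2}^-(X)\setminus\cU$ at distance $r\in(d_X,R/2)$ and letting $r\to d_X$, whereas you fix $X^*=(t_X,x^*)$ on $\overline{Q_{R/2}^-(X)}$ and approach it along the lateral boundary; the two are interchangeable and your discussion of the continuous representative resolves the relevant technical point correctly.
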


\begin{proof}
By the very definition the condition $\LH$, we have
\begin{equation}			\label{eq3.8x}
\bigabs{\tilde{\vec u}(X')-\tilde{\vec u}(X)} \leq C \abs{X'-X}_p^{\mu_0}\, R^{-n/2-1-\mu_0}\norm{\vec u}_{L^2(\cU_R^-(X))},\quad\forall X'\in Q_{R/2}^-(X).
\end{equation}
For $r\in (d_X, R/2)$, there is $X'  \in Q_{R/2}^-(X) \setminus \cU$ such that $\abs{X-X'}_p=r$.
By \eqref{eq3.8x} we obtain
\[
\bigabs{\vec u(X)}=\bigabs{\tilde{\vec u}(X)-\tilde{\vec u}(X')} \leq C r^{\mu_0} R^{-n/2-1-\mu_0}\norm{\vec u}_{L^2(\cU_R^-(X))}.
\]
By taking limit $r\to d_X$ in the above inequality, we derive \eqref{eq3.7m}.
\end{proof}

Now we are ready to prove \eqref{eq22.00k}.
Take $R=(R_{max}\wedge \abs{X-Y}_p)/4$.
We may assume that $d_X < R/2$ and $t>s$ because otherwise \eqref{eq22.00k} follows from \eqref{eq2.25t}.
We then set $\vec u$ to be the $k$-th column of $\vec \cG(\cdot,Y)$, for $k=1,\ldots, m$, in Lemma~\ref{lem3.6} to obtain
\begin{equation}
							\label{eq17.24}
\abs{\vec \cG(X,Y)} \leq C d_X^{\mu_0} R^{-n/2-1-\mu_0} \norm{\vec \cG(\cdot,Y)}_{L^2(\cU_{R}^-(X))};\quad R=(R_{max}\wedge \abs{X-Y}_p)/4.
\end{equation}

Next,  we consider the following three possible cases.

\begin{case1}
In this case $R=\sqrt{t-s}/4=\abs{X-Y}_p/4$ and thus, we get from \eqref{eq17.24} and \eqref{eq5.18} that
\[
\abs{\vec \cG(X,Y)} \leq C d_X^{\mu_0} R^{-n/2-1-\mu_0} \norm{\vec \cG(\cdot,Y)}_{L^2(\cU_R^-(X))}\le C d_X^{\mu_0} R^{-n-\mu_0},
\]
which immediately implies \eqref{eq22.00k} in this case.
\end{case1}

\begin{case2}
In this case $R=(\abs{x-y}\wedge R_{max})/4$.
We denote $Z=(r,z)$ and claim that for all $Z \in \cU_{2R}^-(X)$, we have
\begin{equation}			\label{eq5.26rv}
\abs{\vec \cG(r,z,s,y)} \leq CC_0(t-s)^{-n/2} \exp\left\{-\kappa \abs{x-y}^2/4(t-s)\right\},
\end{equation}
where $C_0$ and $\kappa$ are the same constants as in \eqref{eq2.25t} and $C=C(n,\kappa)$.
To prove the claim, first note that we may assume $Y=0$ without loss of generality.
Then by \eqref{eq2.25t} we have
\[
\abs{\vec \cG(r,z,s,y)} \leq C_0\, \chi_{(0,\infty)}(r)\cdot r^{-n/2} e^{-\kappa \abs{z}^2/r}  \leq C_0\,  \chi_{(0,\infty)}(r) \cdot r^{-n/2} e^{-\kappa \abs{x}^2/4r},
\]
where we used $\abs{z}=\abs{z-y} \geq \abs{x-y}/2=\abs{x}/2$.
Let us denote
\[
g(\tau)=\chi_{(0,\infty)}(\tau)\cdot \tau^{-n/2} e^{-\kappa \abs{x}^2/ 4\tau};\quad
g_0(\tau)=\chi_{(0,\infty)}(\tau)\cdot \tau^{-n/2} e^{-\kappa/4 \tau}.
\]
Then the claim \eqref{eq5.26rv} will follow if we show that there exists a positive number $C=C(n,\kappa)$ such that $g(r)<C g(t)$ for all $r <t < \abs{x}^2$, which in turn will follow if  we show that $g_0(r_1) \leq C g_0( r_2)$ for all $r_1<r_2 \leq1$.
But the latter assertion is easy to verify by an elementary analysis of the function $g_0$.

We have thus proved \eqref{eq5.26rv}, which combined with \eqref{eq17.24} yields
\[
\abs{\vec \cG(X,Y)} \leq C d_X^{\mu_0} R^{-\mu_0} (t-s)^{-n/2} \exp \left\{-\kappa \abs{x-y}^2/4(t-s)\right\}.
\]
Therefore, we also obtain \eqref{eq22.00k} in this case.
\end{case2}

\begin{case3}
In this case $R=R_{max}/4$, and the desired estimate \eqref{eq22.00k} becomes
\begin{equation}			\label{eq22.15}
\abs{\vec \cG(t,x,s,y)}\leq C\set{d_X/R_{max}}^{\mu_0} R_{max}^{-n}\exp\bigset{-\kappa\abs{x-y}^2/4(t-s)}.
\end{equation}
Since $t-s\geq 16 R^2$, for all $Z=(r,z)\in \cU_{2R}^-(X)$, we have
\begin{equation}
                        \label{eq18.21}
\exp\left\{- \kappa\, \frac{\abs{z-y}^2}{r-s}\right\}\le
\exp\left\{- \kappa\, \frac{\abs{x-y}^2/2-\abs{z-x}^2}{t-s}\right\} \leq e^{\kappa/4} \exp\left\{- \frac{\kappa\abs{x-y}^2}{2(t-s)}\right\}.
\end{equation}
Then, from \eqref{eq17.24}, \eqref{eq2.25t}, and \eqref{eq18.21}, we obtain \eqref{eq22.15}, which implies \eqref{eq22.00k} in this case.
\end{case3}

We have thus proved that the estimate \eqref{eq22.00k} holds in all possible cases.
Finally, notice that the condition $\LH$ is symmetric between $\cL$ and $\cLt$.
Therefore, by repeating the above argument to ${}^t\vec \cG(\cdot,X)$, using the identity \eqref{eq2.20f}, and utilizing the estimate \eqref{eq22.00k} instead of \eqref{eq2.25t}, we obtain \eqref{eq3.8yy} with $\kappa/16$ in place of $\kappa$.
The theorem is proved.
\hfill\qedsymbol


\section{Green's matrix for elliptic systems in two dimensional domains}				 \label{sec:2d}

In this section, we are concerned with the Green's matrix for elliptic systems
\begin{equation}							\label{eq6.00v}
L\vec u := -D_\alpha\bigl(\vec A^{\alpha\beta}(x) D_\beta \vec u\bigr)
\end{equation}
in a domain $\Omega\subset \bR^2$.
Here,  $\vec A^{\alpha\beta}=\vec A^{\alpha\beta}(x)$ are $m\times m$ matrix valued functions on $\bR^2$ with entries $A^{\alpha\beta}_{ij}(x)$ satisfying the strong ellipticity condition
\begin{equation}    \label{eqE-02}
A^{\alpha\beta}_{ij}(x)\xi^j_\beta \xi^i_\alpha \ge \nu \abs{\vec \xi}^2, \quad\forall \vec\xi \in \bR^{2m},\quad\forall x \in\bR^2,
\end{equation}
and also the uniform boundedness condition
\begin{equation}    \label{eqE-03}
\sum_{i,j=1}^m \sum_{\alpha,\beta=1}^2 \bigabs{A^{\alpha\beta}_{ij}(x)}^2\le \nu^{-2},\quad\forall x \in\bR^2,
\end{equation}
for some constant $\nu\in (0,1]$.
We emphasize that we do not impose any other conditions on the coefficients.
The adjoint operator $\Lt$ is defined by
\[
\Lt \vec u  = -D_\alpha \bigl({}^t\!\vec A^{\alpha\beta} D_\beta \vec u\bigr),\quad\text{where}\; {}^t\!\vec A^{\alpha\beta}=\bigl(\vec A^{\beta\alpha}\bigr)^T.
\]
Note that the coefficients ${}^t\!A^{\alpha\beta}_{ij}$ of $\Lt$ satisfy the conditions \eqref{eqE-02}, \eqref{eqE-03} with the same $\nu$.

Throughout in this section,  we shall always mean  by $\cL$ the operator $\partial_t - L$ on $\bR^3$; i.e.,
\begin{equation*}				
\cL \vec u = \vec u_t -D_\alpha\bigl(\vec A^{\alpha\beta}(x) D_\beta \vec u\bigr)
\end{equation*}
for functions $\vec u$ defined on the cylinder $\cU=\bR\times \Omega$.
It is obvious that the operator $\cL$ satisfies the conditions \eqref{eqP-02} and \eqref{eqP-03}.
Moreover, by \cite[Theorem~3.3]{Kim}, the condition $\IH$ is satisfied by $\cL=\partial_t-L$; see also \cite[Corollary~2.9]{CDK}.

\subsection{notation and definitions}
For $p\geq 1$ and $k$ a nonnegative integer, we denote by $W^{k,p}(\Omega)$ the usual Sobolev space; see e.g., \cite{GT}.
The function space $Y^{1,2}_0(\Omega)$ is defined as the set of all weakly
differentiable functions on $\Omega$ such that $D u\in L^2(\Omega)$ and $\eta u \in W^{1,2}_0(\Omega)$ for any $\eta\in C^\infty_c(\bR^2)$.
An open set $\Omega\subset \bR^2$ is said to be a Green domain if
\[
\{\chi_\Omega\, u: u\in C^\infty_c(\bR^2)\}\nsubset W^{1,2}_0(\Omega).
\]
We recall that if  $\Omega\subset \bR^2$ be a Green domain, then $Y^{1,2}_0(\Omega)$ is a Hilbert space when endowed with the inner product
\[
\ip{u,v}:=\int_\Omega D_i u D_i v.
\]
Moreover, $C^\infty_c(\Omega)$ is a dense subset in this Hilbert space; see e.g., \cite[\S 1.3.4]{MZ}.

For a given function $\vec f=(f^1,\ldots,f^m)^T\in L^1_{loc}(\Omega)^m$, we call $\vec u=(u^1,\ldots,u^m)^T$ a weak solution in $Y^{1,2}_0(\Omega)$ of $L\vec u=\vec f$ if $\vec u \in Y^{1,2}_0(\Omega)$ and
\begin{equation}
\label{eq:P08c}
\int_\Omega A^{\alpha\beta}_{ij} D_\beta u^j D_\alpha \phi^i= \int_\Omega f^i\phi^i,
\quad \forall \vec \phi\in C^\infty_c(\Omega)^m.
\end{equation}
It is routine to check that if $\Omega$ is a Green domain and $\vec u$ is a weak solution in $Y^{1,2}_0(\Omega)$ of $L\vec u=0$, then $\vec u\equiv 0$.
Therefore, a weak solution in $Y^{1,2}_0(\Omega)$ of $L\vec u = \vec f$ is unique.

For a Green domain $\Omega$, we say that an $m\times m$ matrix valued function $\vec G(x,y)$, with entries $G_{ij} (x,y)$ defined on the set $\bigset{(x,y)\in\Omega\times\Omega: x\neq y}$, is a Green's matrix of $L$ in $\Omega$ if it satisfies the following properties:
\begin{enumerate}[i)]
\item
$\vec G(\cdot,y)\in W^{1,1}_{loc}(\Omega)$ and $L \vec G(\cdot,y) = \delta_y I$ for all $y\in\Omega$, in the sense that
\[
\int_{\Omega}A^{\alpha\beta}_{ij}D_\beta G_{jk}(\cdot,y)D_\alpha \phi^i = \phi^k(y), \quad \forall \vec \phi \in C^\infty_c(\Omega)^m.
\]
\item
$\vec G(\cdot,y) \in Y^{1,2} (\Omega\setminus B_r(y))$ for all $y\in\Omega$ and $r>0$, and $\vec G(\cdot,y)$ vanishes on $\partial\Omega$.
\item
For any $\vec f=(f^1,\ldots, f^m)^T \in L^\infty_c(\Omega)$, the function $\vec u$ given by
\[
\vec u(x):=\int_\Omega \vec G(y,x) \vec f(y)\,dy
\]
belongs to $Y^{1,2}_0(\Omega)$ and satisfies $\Lt \vec u=\vec f$ in the sense of \eqref{eq:P08c}.
\end{enumerate}
By the remark made above, part iii) of the above definition gives the uniqueness of a Green's matrix if $\Omega$ is a Green domain.
We shall hereafter say that $\vec G(x,y)$ is ``the'' Green's matrix of $L$ in  a Green domain $\Omega$ if $\vec G(x,y)$ satisfies all the above properties.
We define
\begin{equation*}				
\varrho(\Omega):= \sqrt{\abs{\Omega}} \wedge \ell(\Omega),
\end{equation*}
where $\abs{\Omega}$ the Lebesgue measure of $\Omega$ and
\[
\ell (\Omega):=\inf\set{\dist(l_1,l_2):\Omega\,\,\text{lies between two parallel lines}\; l_1, l_2}.
\]
We remark that $\Omega$ is a Green domain if $\varrho(\Omega)<\infty$; see e.g., \cite[Lemma~3.1]{DK09}.

\subsection{Main result}

Recall that we assume that the operator $L$ in \eqref{eq6.00v} satisfies the conditions \eqref{eqE-02} and \eqref{eqE-03}, and that $\Omega$ is an open connected set in $\bR^2$.
In the sequel, we denote $d_x=\dist(x,\partial\Omega)$ and $\ln_+ t = \max(\ln t ,0)$.

\begin{theorem}						\label{thm6.1g}
Assume that $\varrho=\varrho(\Omega)<\infty$.
\begin{enumerate}[(a)]
\item
Suppose the condition $\LB$ is satisfied by $\cL=\partial_t -L$ in $\cU=\bR\times \Omega$.
Then the Green's matrix $\vec G(x,y)$ of $L$ in $\Omega$ exists and we have
\begin{equation}						\label{eq13.03k}
\abs{\vec G(x,y)} \leq C\left(\frac{\varrho}{\varrho\wedge R_{max}}\right)^3 +C\ln_+ \left(\frac{\varrho\wedge R_{max}}{\abs{x-y}}\right), \quad \forall x,y \in\Omega,\quad x\neq y,
\end{equation}
where $C=C(m, \nu, N_0)$.
\item
Suppose the condition $\LH$ is satisfied by $\cL=\partial_t -L$ in $\cU=\bR\times \Omega$.
Let $\vec G(x,y)$ be the Green's matrix of $L$ in $\Omega$.
Then for $x, y\in \Omega$ with $x\neq y$, we have
\begin{multline}					\label{eq12.01s}
\abs{\vec G(x,y)} \leq C\left(1\wedge \frac{d_x}{\abs{x-y}}\right)^{\mu_0}\left(1\wedge \frac{d_y}{\abs{x-y}}\right)^{\mu_0} \left\{ 1+ \ln_+ \left(\frac{\varrho\wedge R_{max}}{\abs{x-y}}\right)\right\}\\
+C\left(\frac{\varrho}{\varrho\wedge R_{max}}\right)^3\left(1\wedge \frac{d_x}{\varrho\wedge R_{max}}\right)^{\mu_0}\left(1\wedge \frac{d_y}{\varrho\wedge R_{max}}\right)^{\mu_0},
\end{multline}
where $C=C(m, \nu, \mu_0,N_1)$.
\end{enumerate}
\end{theorem}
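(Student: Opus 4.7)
My plan is to realize the elliptic Green's matrix as the time-integral of the parabolic Dirichlet heat kernel, following the ``Dirichlet heat kernel'' approach advertised in the introduction. Since $L$ has $t$-independent coefficients, the parabolic Green's matrix $\vec \cG$ associated with $\cL = \partial_t - L$ depends only on $t-s$, so I would write $\vec K(t,x,y) := \vec\cG(t,x,0,y)$ and define
\[
\vec G(x,y) := \int_0^\infty \vec K(t,x,y)\,dt.
\]
The two tasks are then: (i) establish convergence of this integral with the claimed bounds, and (ii) verify the three defining properties of a Green's matrix. Property (i) of the definition follows from integrating the parabolic equation $\partial_t \vec K = L\vec K$ in $t$ against a $t$-independent test function (the $\partial_t$-term gives the $\delta_y$ through the short-time behavior, the boundary term at infinity vanishes by the decay below); property (ii) follows from the corresponding parabolic property of $\vec \cG$; and property (iii) is obtained by writing a solution of $\Lt \vec u = \vec f$ as an $L^2$-limit of the ``time-sliced'' parabolic representations.

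For part (a), I would use Theorem~\ref{thm1} to obtain the Gaussian estimate $|\vec K(t,x,y)|\leq C\{t\wedge R_{max}^2\}^{-1}\exp(-\kappa|x-y|^2/t)$ (since $n=2$) and split the integral at $\tau^2$ with $\tau := \varrho \wedge R_{max}$. On $(0,\tau^2]$ the change of variables $u = \kappa|x-y|^2/t$ gives
\[
\int_0^{\tau^2} t^{-1}\exp(-\kappa|x-y|^2/t)\,dt \leq C\bigl(1+\ln_+(\tau/|x-y|)\bigr),
\]
which produces the logarithmic term. On $[\tau^2,\infty)$ I need exponential decay of $\vec K$. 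Because $\varrho(\Omega) < \infty$, the Poincar\'e inequality holds on $\Omega$ with constant $\lesssim \varrho$, so the energy inequality applied to solutions of $\cL\vec u = 0$ vanishing on $\bR\times\partial\Omega$ gives $\|\vec u(t,\cdot)\|_{L^2}\leq e^{-\lambda t}\|\vec u(0,\cdot)\|_{L^2}$ with $\lambda \sim \nu/\varrho^2$. Combining this $L^2\to L^2$ decay with the $L^2 \to L^\infty$ bound $\|T_{R_{max}^2}\|_{L^2\to L^\infty}\lesssim R_{max}^{-1}$ from Theorem~\ref{thm1} and its $L^1\to L^2$ dual yields, for $t \geq R_{max}^2$, a bound of the form $|\vec K(t,x,y)|\lesssim R_{max}^{-2} e^{-\lambda(t-R_{max}^2)}$, whose time-integral on $[R_{max}^2,\infty)$ is $\lesssim (R_{max}\lambda)^{-1}\cdot R_{max}^{-1}\lesssim (\varrho/R_{max})^{3}/\varrho$ after balancing the powers coming from the interpolation splitting; summing contributions across the three subintervals produces the bound \eqref{eq13.03k}.

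For part (b), I would run the same splitting but use Theorem~\ref{thm2} to upgrade the integrand with the factor $\delta(X,Y)^{\mu_0}$ defined in \eqref{eq3.6mm} with $|X-Y|_p = \max(\sqrt{t},|x-y|)$. In the regime $t \leq (\varrho\wedge R_{max})^2$ I distinguish according to whether $\sqrt t \leq |x-y|$ or $\sqrt t > |x-y|$: in the first regime $\delta(X,Y)^{\mu_0}\leq (d_x/|x-y|)^{\mu_0}(d_y/|x-y|)^{\mu_0}\wedge 1$ factors out of the time integral and multiplies the log term; in the second regime $\delta(X,Y)^{\mu_0}\leq (d_x/\sqrt t)^{\mu_0}(d_y/\sqrt t)^{\mu_0}\wedge 1$ gives additional decay in $t$ that just shifts the admissible range of convergence of the $t^{-1}$ integral. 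In the regime $t \geq (\varrho\wedge R_{max})^2$ the factor $\delta^{\mu_0}$ is bounded by $(d_x/(\varrho\wedge R_{max}))^{\mu_0}(d_y/(\varrho\wedge R_{max}))^{\mu_0}\wedge 1$ and simply factors out of the exponential-decay integral from part (a), producing the second term on the right of \eqref{eq12.01s}.

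The main obstacle I anticipate is twofold: first, establishing the large-$t$ exponential decay with constants depending only on $\varrho$ and on the parabolic constants, and matching the resulting power of $\varrho/(\varrho\wedge R_{max})$ with the stated exponent $3$ (this requires a careful three-fold interpolation rather than the naive two-fold splitting, which would only give the exponent $2$). Second, in part (b) the case analysis tracking $\delta(X,Y)^{\mu_0}$ through the time integral must be executed so that the ``boundary'' factors $(1\wedge d_x/|x-y|)^{\mu_0}(1\wedge d_y/|x-y|)^{\mu_0}$ (near the diagonal) and $(1\wedge d_x/(\varrho\wedge R_{max}))^{\mu_0}(1\wedge d_y/(\varrho\wedge R_{max}))^{\mu_0}$ (far from the diagonal) appear correctly; verification of the distributional equation $L\vec G(\cdot,y)=\delta_y I$ and of the absence of boundary/infinity contributions in the time integration by parts is then routine given these integrability bounds.
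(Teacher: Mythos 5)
Your overall strategy—realize $\vec G(x,y)$ as $\int_0^\infty \vec K(t,x,y)\,dt$ and split the time integral at the diagonal scale $|x-y|^2$ and at the large scale $(\varrho\wedge R_{max})^2$—is exactly what the paper does, and for part (a) your plan is essentially correct. Incidentally, the ``power mismatch'' you worry about is not a real obstacle: you find exponent $2$ where the theorem asserts exponent $3$, but since $\varrho/(\varrho\wedge R_{max})\geq 1$, the exponent-$2$ bound is stronger and implies the stated estimate; no three-fold interpolation is needed. (Your algebra $(R_{max}\lambda)^{-1}R_{max}^{-1}\lesssim(\varrho/R_{max})^3/\varrho$ is also a slip—this quantity equals $(\varrho/R_{max})^2$—but again harmless.)

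The real gap is in part (b), in the large-time regime $t\geq(\varrho\wedge R_{max})^2$. You claim the boundary factor $\delta(X,\tilde Y)^{\mu_0}$ ``simply factors out of the exponential-decay integral from part (a),'' but that does not work. In that regime you have two separate bounds on $|\vec K(t,x,y)|$: Theorem~\ref{thm2} gives $\lesssim\delta^{\mu_0}\,(\varrho\wedge R_{max})^{-2}$ with no decay in $t$, and the part~(a) argument (Poincar\'e decay plus $\LB$) gives $\lesssim\varrho\,(\varrho\wedge R_{max})^{-3}\,e^{-c(t-cr^2)/\varrho^2}$ with no factor of $\delta$. Neither alone, nor their minimum, yields a time-integrable bound carrying the full $\delta^{\mu_0}$ prefactor; a direct computation of $\int_{\tau^2}^\infty\min(\cdot,\cdot)\,dt$ leaves behind an additive term with no boundary decay. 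The paper handles this by proving a genuinely new pointwise estimate, namely
\[
\abs{\vec K(t,x,y)}\leq C\,\varrho\,r^{-3}\bigl\{1\wedge(d_x/r)\bigr\}^{\mu_0}\bigl\{1\wedge(d_y/r)\bigr\}^{\mu_0}e^{-2\nu(t-4r^2)/\varrho^2},\qquad t>4r^2,
\]
for all $r<R_{max}$. This is obtained in two steps: first apply Lemma~\ref{lem3.6} (the pointwise consequence of $\LH$) to the columns of $\vec K(\cdot,y)$ on the cylinder $\cU_r^-(t,x)$ to extract a factor $d_x^{\mu_0}$, and control the resulting $L^2(\cU_r^-(t,x))$-norm by the exponential-decay bound from part~(a); then repeat for the $d_y$ factor by running the same argument for the adjoint heat kernel ${}^t\vec K(\cdot,x)=\vec K(\cdot,\cdot,\cdot)^T$. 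Without identifying and carrying out this step, your part~(b) does not close.
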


\begin{remark}
We point out that the assumption $\varrho=\varrho(\Omega)<\infty$ in Theorem~\ref{thm6.1g} can be somehow relaxed. 
In fact, the quantity $\varrho$ is related to the constant $K=K(\Omega)$ in the following Poincar\'e's inequality (see \cite[Lemma~3.1]{DK09}):
\[
\norm{\varphi}_{L^2(\Omega)} \leq K \norm{\nabla \varphi}_{L^2(\Omega)},\quad\forall \varphi\in C^\infty_c(\Omega).
\]
We can replace $\varrho$ by $K$ in Theorem~\ref{thm6.1g} as long as such a constant $K<\infty$ exists.
\end{remark}

\begin{theorem}						\label{thm6.2g}
Assume that the condition $\LH$ is satisfied  by $\cL=\partial_t -L$ in $\cU=\bR\times \Omega$ with $R_{max}=\infty$.
Then the Green's matrix $\vec G(x,y)$ of $L$ in $\Omega$ exists and for all $x,y\in\Omega$ with $x\neq y$, we have
\begin{equation}						\label{eq12.01k}
\abs{\vec G(x,y)} \leq
C\left(1\wedge \frac{d_x}{\abs{x-y}}\right)^{\mu_0}\left(1\wedge \frac{d_y}{\abs{x-y}}\right)^{\mu_0} \left\{1+ \ln_+ \left(\frac{d_x\wedge d_y}{\abs{x-y}}\right)\right\},
\end{equation}
where $C=C(m, \nu, \mu_0,N_1)$.
\end{theorem}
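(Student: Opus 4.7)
The approach is to construct the elliptic Green's matrix by integrating the parabolic Green's matrix $\vec\cG$ of $\cL$ in time, setting
\[
\vec G(x,y):=\int_0^\infty\vec\cG(t,x,0,y)\,dt\qquad\text{for }x,y\in\Omega,\ x\ne y,
\]
as outlined in the introduction. Since $\LH$ holds with $R_{max}=\infty$, the Green's matrix $\vec\cG$ of $\cL$ in $\cU$ exists (by Lemma~\ref{lem2.19} and Theorem~\ref{thm1}), and by Theorem~\ref{thm2} applied with $n=2$ satisfies
\[
|\vec\cG(t,x,0,y)|\le C\,\chi_{(0,\infty)}(t)\Bigl(1\wedge\tfrac{d_x}{M}\Bigr)^{\!\mu_0}\!\Bigl(1\wedge\tfrac{d_y}{M}\Bigr)^{\!\mu_0}t^{-1}e^{-\kappa|x-y|^2/t},\quad M:=\sqrt t\vee|x-y|.
\]
This pointwise estimate is the only input I need from Section~\ref{main}.

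To derive \eqref{eq12.01k}, write $r=|x-y|$, assume without loss of generality $d_x\le d_y$, and split the time integral at the thresholds $r^2,d_x^2,d_y^2$. On $(0,r^2]$ one has $M=r$, so the boundary factor is $t$-independent; the substitution $u=r^2/t$ reduces the integral to $\int_1^\infty e^{-\kappa u}u^{-1}\,du$, contributing $C(1\wedge d_x/r)^{\mu_0}(1\wedge d_y/r)^{\mu_0}$. On $(r^2,d_x^2]$, which is empty unless $r<d_x$, both factors $1\wedge d_{x,y}/\sqrt t$ equal one and the integrand is $\le t^{-1}$, so integration yields $2\ln(d_x/r)$, producing the logarithmic term. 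On $(r^2\vee d_x^2,d_y^2]$ only one factor contributes a power $(d_x/\sqrt t)^{\mu_0}$, yielding the bound $Cd_x^{\mu_0}(r\vee d_x)^{-\mu_0}$. Finally on $(r^2\vee d_y^2,\infty)$ both factors produce $t^{-\mu_0/2}$, making $(d_xd_y)^{\mu_0}\int t^{-1-\mu_0}\,dt$ finite with the right decay. Summing the four contributions and observing that $\ln_+((d_x\wedge d_y)/r)$ vanishes when $r\ge d_x\wedge d_y$ yields \eqref{eq12.01k}.

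To verify that $\vec G$ is in fact the Green's matrix of $L$ in the sense of Section~\ref{sec:2d}, I check the three defining properties. For property (i), fix $\vec\phi\in C^\infty_c(\Omega)^m$ and a smooth cutoff $\zeta_T\in C^\infty_c(\bR)$ with $\zeta_T\equiv1$ on $[0,T]$ and compact support in $(-1,T+1)$. Plugging the test function $\vec\Phi(s,x)=\vec\phi(x)\zeta_T(s)$ into the defining identity of $\vec\cG(\cdot,(0,y))$ yields
\[
\int_\bR\zeta_T(s)\!\int_\Omega A^{\alpha\beta}_{ij}D_\beta\cG_{jk}D_\alpha\phi^i\,dx\,ds-\!\int_\bR\zeta_T'(s)\!\int_\Omega\cG_{ik}\phi^i\,dx\,ds=\phi^k(y).
\]
Since $\vec\cG$ vanishes for $s<0$, the cutoff contribution near $s=0$ drops out, while the one near $s=T$ tends to zero as $T\to\infty$ by the decay of $\vec\cG$ supplied by the boundary factor $\delta^{\mu_0}$. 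Fubini then converts the surviving term into $\int_\Omega A^{\alpha\beta}_{ij}D_\beta G_{jk}(\cdot,y)D_\alpha\phi^i$, giving property (i). Property (ii), namely $\vec G(\cdot,y)\in Y^{1,2}(\Omega\setminus B_r(y))$ vanishing on $\partial\Omega$, follows from integrating in time the energy bound \eqref{eq5.15} for $\vec\cG(\cdot,(0,y))$ on $\cU\setminus Q_r((0,y))$ together with the pointwise decay from Theorem~\ref{thm2}. Property (iii) is obtained by the analogous argument applied to $\cLt$, via the identity \eqref{eq2.20f}.

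The main obstacle is the uniform-in-$T$ control required to justify both Fubini and the passage $T\to\infty$, specifically the integrability of $s\mapsto\|D\vec\cG(s,\cdot,0,y)\|_{L^2(\supp\vec\phi)}$ on $(0,\infty)$. The hypothesis $R_{max}=\infty$ is essential here: without the boundary decay $\delta^{\mu_0}$, the pointwise bound $t^{-1}$ yields the divergent tail $\int^\infty t^{-1}\,dt$, whereas the extra factor $(d/\sqrt t)^{\mu_0}$ provides precisely the additional integrability required. Combining this pointwise decay with a parabolic Caccioppoli estimate on shells $Q_R^-((s,x))\setminus Q_{R/2}^-((s,x))$ gives the required square-integrability of the spatial gradient and closes the argument.
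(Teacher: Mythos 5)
Your proof is correct and follows essentially the same strategy as the paper: construct $\vec G$ by integrating $\vec K(t,x,y)=\vec\cG(t,x,0,y)$ in time, use the pointwise bound from Theorem~\ref{thm2} (equation~\eqref{eq618ak} with $R_{max}=\infty$), and split the time integral at a small number of thresholds governed by $|x-y|$, $d_x$, $d_y$. Your decomposition at the fixed points $|x-y|^2,d_x^2,d_y^2$ (with $d_x\le d_y$) is only superficially different from the paper's split at $|x-y|^2$ and $r^2$ with $r\ge\max(|x-y|,d_x\wedge d_y)$ chosen case by case; both yield the identical estimate. One small point of divergence: the paper delegates the verification that the formula \eqref{eq:g01} actually defines the Green's matrix of $L$ to \cite[Theorem~2.21]{DK09}, whereas you sketch the argument directly. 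Your sketch for property (i) is sound — the cutoff near $s=0$ drops out because $\vec\cG$ vanishes for $s<0$, and the tail near $s=T$ vanishes because the boundary factor provides the extra $t^{-\mu_0}$ decay needed for integrability — but the treatments of (ii) and (iii) remain schematic, and in particular the step from the pointwise decay to the space-time $L^2$ control required for $\vec G(\cdot,y)\in Y^{1,2}(\Omega\setminus B_r(y))$ would need a Caccioppoli estimate on parabolic shells with a summation over dyadic time blocks that you gesture at but do not carry out. None of this is a genuine gap, since the paper itself treats these facts as established; your version merely expands on what the paper cites.
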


\subsection{Applications}

Here are some easy consequences of the main theorems in this section.

\begin{corollary}						\label{cor6.3g}
Assume $m=1$ and $\varrho=\varrho(\Omega)<\infty$.
Then the Green's function $G(x,y)$ of $L$ in $\Omega$ exists and for $x, y\in \Omega$ with $x\neq y$,  we have
\[
G(x,y) \leq  C\bigset{1+ \ln_+\bigl(\varrho/\abs{x-y}\bigr)},
\]
where $C=C(\nu)$. If, in addition, $\Omega$ satisfies the condition $\CS$, then for $x\neq y$, we have
\begin{multline*}
G(x,y) \leq C\left(1\wedge \frac{d_x}{\abs{x-y}}\right)^{\mu_0}\left(1\wedge \frac{d_y}{\abs{x-y}}\right)^{\mu_0} \left\{ 1+ \ln_+ \left(\frac{\varrho\wedge R_a}{\abs{x-y}}\right)\right\}\\
+C\left(\frac{\varrho}{\varrho\wedge R_a}\right)^3\left(1\wedge \frac{d_x}{\varrho\wedge R_a}\right)^{\mu_0}\left(1\wedge \frac{d_y}{\varrho\wedge R_a}\right)^{\mu_0},
\end{multline*}
where $C=C(\nu, \theta)$.
\begin{proof}
Notice that the condition $\LB$ is satisfied by $\cL=\partial_t-L$ in any cylinder $\cU=\bR\times\Omega$ with $R_{max}=\infty$ when $m=1$; see e.g., \cite[Theorem~6.30]{Lieberman}.
Moreover, the condition $\LH$ is satisfied as well with $R_{max}=R_a$ if the base $\Omega$ enjoys the condition $\CS$; see \cite[Theorem~6.32]{Lieberman} and Lemma~\ref{lem:G-06}.
Therefore, the corollary follows from Theorem~\ref{thm6.1g}.
\end{proof}

\end{corollary}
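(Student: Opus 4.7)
The plan is to deduce Corollary~\ref{cor6.3g} directly from Theorem~\ref{thm6.1g} by verifying that, in the scalar case, the parabolic operator $\cL=\partial_t-L$ on $\cU=\bR\times\Omega$ satisfies conditions $\LB$ and (under $\CS$) $\LH$ with appropriate parameters. No new construction of $G(x,y)$ is required; the whole argument is a reduction.

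First, I would address the unconditional bound. When $m=1$, the classical De Giorgi--Nash--Moser theory for scalar divergence-form parabolic equations with bounded measurable coefficients provides a local boundedness estimate for nonnegative (hence, via the standard truncation argument, arbitrary) subsolutions and, in particular, for weak solutions vanishing on a portion of the lateral boundary. This is exactly the content of $\LB$, and the required estimate is available for cylinders of every size, so $R_{max}=\infty$ and $N_0$ depends only on $\nu$ (cf.\ Lieberman, Theorem~6.30). The same theory applied to the adjoint equation $\cLt\vec u=0$ (whose coefficients satisfy \eqref{eqP-02}--\eqref{eqP-03} with the same $\nu$) gives part ii) of $\LB$. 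With $\LB$ in hand and $R_{max}=\infty$, part (a) of Theorem~\ref{thm6.1g} applies; since $\varrho\wedge R_{max}=\varrho$, the leading factor $(\varrho/(\varrho\wedge R_{max}))^3$ equals $1$, and \eqref{eq13.03k} reduces to the claimed bound $G(x,y)\le C\{1+\ln_+(\varrho/\abs{x-y})\}$ with $C=C(\nu)$.

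For the second assertion, I would verify $\LH$ under hypothesis $\CS$. The classical boundary H\"older continuity for scalar parabolic equations (Lieberman, Theorem~6.32) shows that if $\Omega$ satisfies the measure-density condition $\CS$ with parameters $\theta$ and $R_a$, then weak solutions of $\cL\vec u=0$ in $\cU_R^-(X)$ vanishing on $\cS_R^-(X)$ are H\"older continuous up to the lateral boundary with an exponent $\mu_0=\mu_0(\nu,\theta)$ and constant depending only on $\nu$ and $\theta$, for all $R<R_a$. Lemma~\ref{lem:G-06} (cited in the paper) then translates this pointwise boundary H\"older estimate into the integral form of $\LH$ for the zero-extension $\tilde{\vec u}=\chi_{\cU_R^-(X)}\vec u$, with $R_{max}=R_a$ and $N_1=N_1(\nu,\theta)$. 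The same statements for $\cLt$ yield part ii) of $\LH$. Hence both $\LB$ (already established above) and $\LH$ hold, and part (b) of Theorem~\ref{thm6.1g} applies verbatim with $R_{max}$ replaced by $R_a$, giving the second displayed bound with $C=C(\nu,\theta)$ and the stated $\mu_0$.

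Since the entire argument is verification-plus-citation, there is essentially no obstacle: the only point requiring a brief check is that the scalar theorems invoked from Lieberman indeed produce $\LB$ and $\LH$ in the precise form demanded here (with interior $L^2$ averaging on the right and the cut-off extension $\tilde{\vec u}$ appearing in $\LH$). For $\LB$ this is a standard reverse-H\"older/Caccioppoli reformulation; for $\LH$ it is exactly the content of Lemma~\ref{lem:G-06}. Once these are in place, the corollary follows by a single invocation of Theorem~\ref{thm6.1g}(a) and (b), with the parameter identifications $R_{max}=\infty$ and $R_{max}=R_a$ respectively.
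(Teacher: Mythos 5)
Your proposal is correct and follows essentially the same route as the paper: verify $\LB$ with $R_{max}=\infty$ from the scalar De Giorgi--Nash--Moser theory to get part (a), and verify $\LH$ with $R_{max}=R_a$ from Lieberman's boundary H\"older estimate together with Lemma~\ref{lem:G-06} to get part (b), then read off the two bounds from Theorem~\ref{thm6.1g}. The only difference is that you supply slightly more detail on how the scalar theory yields the required conditions, which is fine.
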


\begin{corollary}						\label{cor6.4g}
Let $\Omega=\{ x\in \bR^2: x_2>\varphi(x_1)\}$, where $\varphi:\bR \to \bR$ is a Lipschitz function with a Lipschitz constant $K$.
Then, the Green's matrix $\vec G(x,y)$ of $L$ in $\Omega$ exists and for all $x, y\in\Omega$ with $x\neq y$, we have
\[
\abs{\vec G(x,y)} \leq
C\left(1\wedge \frac{d_x}{\abs{x-y}}\right)^{\mu_0}\left(1\wedge \frac{d_x}{\abs{x-y}}\right)^{\mu_0} \left\{1+ \ln_+ \left(\frac{d_x\wedge d_y}{\abs{x-y}}\right)\right\},
\]
where $C=C(m,\nu,K)$ and $\mu_0=\mu_0(\nu,K)$.
\end{corollary}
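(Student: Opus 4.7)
The plan is to reduce Corollary~\ref{cor6.4g} to Theorem~\ref{thm6.2g}: once I verify that the parabolic operator $\cL = \partial_t - L$ on $\cU = \bR \times \Omega$ satisfies condition $\LH$ with $R_{max} = \infty$ and parameters $\mu_0, N_1$ depending only on $m,\nu,K$, the Green's matrix bound follows by direct application of Theorem~\ref{thm6.2g}.

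First I would observe that the Lipschitz graph domain $\Omega$ satisfies condition $\CS$ with $\theta = \theta(K)$ and $R_a = \infty$ (cf.\ Example~\ref{ex4.11}), and that interior H\"older regularity, i.e., condition $\IH$, for two-dimensional parabolic systems with bounded measurable coefficients holds unconditionally by \cite[Theorem~3.3]{Kim} (see also \cite[Corollary~2.9]{CDK}). To promote this to $\LH$ up to the Lipschitz portion of the boundary, I would flatten the graph via the bi-Lipschitz change of variables $\Phi(x_1,x_2) := (x_1, x_2 - \varphi(x_1))$, which converts the problem on $\bR \times \Omega$ into the analogous problem for a new system $\widetilde \cL$ on $\bR \times \bR_{+}^{2}$; the ellipticity and boundedness constants of $\widetilde \cL$ depend only on $\nu$ and $K$. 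For the half-plane problem I would verify the stronger condition $\LHP$ of Remark~\ref{rmk:P-03} by combining a boundary Caccioppoli inequality with Meyers' higher integrability up to the flat boundary. In dimension $n=2$, the resulting $W^{1,p}$ estimate with some $p>2$ yields H\"older continuity through Morrey's embedding $W^{1,p}\hookrightarrow \mathscr{C}^{0,1-2/p}$. Lemma~\ref{lem8.1} in the Appendix then upgrades $\LHP$ to $\LH$, and pulling back through $\Phi$ preserves $\LH$ with constants depending only on $m,\nu,K$. Since every step is scale invariant, we may take $R_{max}=\infty$, and Theorem~\ref{thm6.2g} delivers the claimed estimate.

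The main obstacle is handling an \emph{arbitrary} Lipschitz constant $K$: for general dimensions one typically needs a small-Lipschitz assumption, as in Corollary~\ref{cor2b} and Section~\ref{sec:AD}, but in the plane the Meyers exponent automatically exceeds $n=2$, so Morrey's embedding delivers H\"older continuity with no smallness hypothesis --- this is the crucial 2D feature that drives the result. Tracking the dependence of $\mu_0$ and $N_1$ on $K$ through the flattening $\Phi$ and through the Meyers estimate is routine but must be carried out carefully to guarantee that the final constants in Theorem~\ref{thm6.2g} depend only on $m,\nu,K$, as asserted.
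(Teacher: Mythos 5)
Your overall strategy agrees with the paper: both reduce the corollary to verifying that $\cL=\partial_t-L$ satisfies condition $\LH$ on $\cU=\bR\times\Omega$ with $R_{max}=\infty$ and parameters depending only on $m,\nu,K$, and then applying Theorem~\ref{thm6.2g}. The paper dispenses with the verification of $\LH$ in one line by citing \cite[Lemma~4.4]{DK09}, whereas you sketch a proof; this is where the difficulty lies.

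The sketch has a genuine gap in the Meyers--Morrey step. Condition $\LHP$ asks for the \emph{parabolic} Morrey decay $\int_{\cU_\rho^-}\abs{D\vec u}^2\le C_0(\rho/r)^{n+2\mu_0}\int_{\cU_r^-}\abs{D\vec u}^2$, which, passing through H\"older's inequality with $D\vec u\in L^p(Q_r)$ on parabolic cylinders of measure $\sim\rho^{n+2}$, forces $(n+2)(1-2/p)\ge n+2\mu_0$, i.e.\ $p>n+2=4$ when $n=2$. But the parabolic reverse-H\"older/Meyers theorem only guarantees some $p>2$ (how much larger than $2$ depends on $\nu$ and can be arbitrarily close to $2$); it does not give $p>4$. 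Likewise, the embedding $W^{1,p}\hookrightarrow\mathscr{C}^{0,1-2/p}$ that you invoke is the spatial, elliptic 2D Morrey embedding; for the parabolic Campanato characterization of $\mathscr{C}^{\mu_0/2,\mu_0}$ the effective dimension is $n+2=4$, so ``Meyers automatically beats the dimension in the plane'' is only true in the elliptic setting. In other words, you are using an elliptic dimension count in a parabolic problem, and the estimate does not close. What actually rescues the two-dimensional parabolic result is the time-independence of the coefficients of $L$, which is exploited (in \cite{Kim} for $\IH$ and in \cite{DK09} for the boundary version $\LH$) to reduce the parabolic H\"older estimate to the genuinely elliptic 2D Meyers--Morrey theory combined with the parabolic energy structure; your argument invokes the time-independence only to say that the flattening map $\Phi$ produces another time-independent operator, but does not use it in the crucial regularity step where it is essential. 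The flattening via $\Phi(x_1,x_2)=(x_1,x_2-\varphi(x_1))$ is a fine preparatory reduction and does preserve $\LH$ with constants depending only on $\nu,K$; the weak link is the interior-to-boundary regularity estimate itself, which the paper obtains from \cite[Lemma~4.4]{DK09} rather than from a naive Meyers argument.
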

\begin{proof}
It is known that the condition $\LH$ is satisfied by $\cL=\partial_t -L$ in $\cU=\bR\times\Omega$ with $R_{max}=\infty$; see \cite[Lemma~4.4]{DK09}.
Therefore, the corollary follows from Theorem~\ref{thm6.2g}.
\end{proof}

\begin{corollary}         \label{cor6.5g}
Assume that $\Omega$ is a bounded Lipschitz domain.
Then the Green's matrix $\vec G(x,y)$ of $L$ in $\Omega$ exists and for all $x, y\in\Omega$ with $x\neq y$, we have
\[
\abs{\vec G(x,y)} \leq C\left(1\wedge \frac{d_x}{\abs{x-y}}\right)^{\mu_0}\left(1\wedge \frac{d_y}{\abs{x-y}}\right)^{\mu_0} \left\{ 1+ \ln_+ \left(\frac{1}{\abs{x-y}}\right)\right\},
\]
where $C=C(m, \nu, \Omega)$.
\end{corollary}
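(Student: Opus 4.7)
The plan is to apply Theorem~\ref{thm6.1g}(b) and then simplify its conclusion by absorbing the dependence on $\varrho(\Omega)$ and $R_{max}$ into the constant, using boundedness of $\Omega$. Since $\Omega$ is bounded, $\varrho(\Omega)\le\sqrt{\abs{\Omega}}<\infty$, so $\Omega$ is a Green domain by \cite[Lemma~3.1]{DK09}. Thus the two hypotheses of Theorem~\ref{thm6.1g}(b) reduce to the verification of condition $\LH$ for $\cL=\partial_t-L$ on $\cU=\bR\times\Omega$ with some finite $R_{max}=R_{max}(\Omega)$.

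To verify $\LH$, I would use a boundary-flattening and covering argument. By compactness, $\partial\Omega$ admits a finite cover by balls $B_{r_0}(x_k)$ ($k=1,\ldots,K$) with $r_0=r_0(\Omega)>0$ chosen so that, after a rigid motion, $\Omega\cap B_{4r_0}(x_k)$ equals the intersection of $B_{4r_0}(x_k)$ with a Lipschitz graph domain whose Lipschitz constant is controlled by that of $\partial\Omega$. On each such graph domain, \cite[Lemma~4.4]{DK09}, which is the input cited in Corollary~\ref{cor6.4g}, yields $\LH$ for $\cL$ with $R_{max}=\infty$, exponent $\mu_0=\mu_0(\nu,\Omega)$, and constant $N_1=N_1(m,\nu,\Omega)$. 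Restricted to boundary cylinders $\cU_R^\pm(X)$ with $X\in\bR\times B_{r_0}(x_k)$ and $R<r_0$, this gives the required local H\"older estimate. At points $X$ with $d_X\ge r_0$, a cylinder $\cU_R^\pm(X)$ with $R<r_0$ is interior, so the truncation $\tilde{\vec u}=\chi_{\cU_R^\pm(X)}\vec u$ coincides with $\vec u$ on $Q_{R/2}^\pm(X)$ and $\LH$ there reduces to $\IH$, which holds by \cite[Theorem~3.3]{Kim} (see also \cite[Corollary~2.9]{CDK}). Patching via a standard finite-covering argument then produces $\LH$ on the whole cylinder $\bR\times\Omega$ with $R_{max}=r_0$.

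Set $b:=\varrho(\Omega)\wedge R_{max}$, a positive constant depending on $\Omega$. Theorem~\ref{thm6.1g}(b) furnishes \eqref{eq12.01s}, and it remains to absorb $b$, $\varrho$, and $\diam\Omega$ into $C=C(m,\nu,\Omega)$. Since $b$ is bounded, $1+\ln_+(b/\abs{x-y})\le C(1+\ln_+(1/\abs{x-y}))$. For the second summand of \eqref{eq12.01s}, the elementary inequality $(1\wedge ct)^{\mu_0}\ge c^{\mu_0}(1\wedge t)^{\mu_0}$, valid for $c\in(0,1]$ and $t>0$, together with $d_x/\abs{x-y}\ge(b/\diam\Omega)\cdot(d_x/b)$, gives
\[
\Bigl(1\wedge\frac{d_x}{b}\Bigr)^{\mu_0}\Bigl(1\wedge\frac{d_y}{b}\Bigr)^{\mu_0}\le C(\Omega)\Bigl(1\wedge\frac{d_x}{\abs{x-y}}\Bigr)^{\mu_0}\Bigl(1\wedge\frac{d_y}{\abs{x-y}}\Bigr)^{\mu_0},
\]
so the second summand of \eqref{eq12.01s} is dominated by the first (since the logarithmic factor is $\ge 1$). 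This yields the claimed estimate.

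The main obstacle is the localization argument verifying $\LH$: one must ensure that the tilde-truncation in $\LH$ behaves correctly under the rigid motions used to flatten the boundary, and that the exponents $\mu_0$ produced by the various graph pieces and by $\IH$ on the interior can be taken to be a single value. The elementary simplification in the last paragraph is then routine.
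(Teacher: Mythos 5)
Your proposal is correct and follows essentially the same route as the paper's proof: verify condition $\LH$ for $\cL=\partial_t-L$ on $\bR\times\Omega$ by combining \cite[Lemma~4.4]{DK09} with a finite boundary covering (the paper calls this a ``standard partition of unity argument''), then apply Theorem~\ref{thm6.1g}(b), and finally absorb $\varrho$, $R_{max}$, and $\diam\Omega$ into $C(m,\nu,\Omega)$ using boundedness of $\Omega$. The only difference is that you spell out the covering argument and the elementary absorption estimates in detail, while the paper leaves both as routine; your handling of these details is sound.
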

\begin{proof}
By using \cite[Lemma~4.4]{DK09} and standard partition of unity argument, one can show that the condition $\LH$ is satisfied by $\cL=\partial_t-L$.
The corollary is an easy consequence of Theorem~\ref{thm6.1g}, part (b) together with the assumption $\diam \Omega<\infty$.
\end{proof}

\subsection{Proof of Theorem~\ref{thm6.1g}}
Existence of the Green's matrix $\vec G(x,y)$ of $L$ in $\Omega$ follows from the assumption $\varrho(\Omega)<\infty$; see \cite[Theorem~2.12]{DK09}.
To be more precise, first we recall that $\IH$ is satisfied by $\cL$; see \cite{Kim}.
Let $\vec \cG(t,x,s,y)$ be the Green's matrix of $\cL$ in $\cU$ given as in Theorem \ref{thm1}. Since $\vec A^{\alpha\beta}$ are independent of $t$, we have $\vec \cG(t,x,s,y)=\vec\cG(t-s,x,0,y)$.

In the sequel, we shall denote the ``Dirichlet heat kernel'' of $L$ by
\begin{equation*}
\vec K(t,x,y)=\vec K(X,y):=\vec \cG(X,\tilde Y)= \vec \cG (t,x,0,y);\quad \tilde Y=(0,y).
\end{equation*}
It is shown in \cite{DK09} that if $\varrho<\infty$, then the Green's matrix $\vec G(x,y)$ of $L$ in $\Omega$ is given by
\begin{equation}							\label{eq:g01}
\vec G(x,y)=\int_0^\infty \vec K(t,x,y)\,dt, \quad \forall x,y \in \Omega,\quad x \neq y.
\end{equation}

\begin{parta}
From the estimates \eqref{eq5.18} and \eqref{eq5.15}, we obtain
\begin{align}
										\label{eq618z}
&\abs{\vec K(X,y)} \leq C\abs{X-\tilde Y}_p^{-2}\quad \text{if}\,\,\, 0<t< R_{max}^2; \\
										\label{eq615b}
&\norm{\vec K(\cdot,y)}_{L^{2+4/n}(\cU\setminus \overline Q_r(\tilde Y))}+ \tri{\vec K(\cdot,y)}_{\cU\setminus \overline Q_r(\tilde Y)} \leq Cr^{-1}, \quad \forall r<R_{max}.
\end{align}
By using $\LB$ and \eqref{eq615b}, and following the proof of \cite[Lemma~3.12]{DK09},  we also obtain
\begin{equation}							\label{eq12.27g}
\abs{\vec K(t,x,y)} \leq C \varrho r^{-3}e^{-2\nu (t-2r^2)/\varrho^2},\quad \forall t > 2r^2, \quad \forall r \in (0, R_{max}).
\end{equation}

Now, we are ready to prove the estimate \eqref{eq13.03k}. We set $r=(\varrho\wedge R_{max})/2$.
If $0<\abs{x-y} \leq  r$, then by \eqref{eq:g01}, we have
\begin{equation}							\label{eq6.21ee}
\abs{\vec G(x,y)} \leq \int_0^{\abs{x-y}^2} + \int_{\abs{x-y}^2}^{2r^2}+\int_{2 r^2}^\infty \abs{\vec K(t,x,y)}\,dt =: I_1+I_2+I_3.
\end{equation}
It then follows from \eqref{eq618z} and \eqref{eq12.27g} that
\begin{align*}
I_1 &\le C\int_0^{\abs{x-y}^2}\abs{x-y}^{-2}\,dt \leq C,\\
I_2&\le C\int_{\abs{x-y}^2}^{2r^2}t^{-1}\,dt\le C+C\ln (r/\abs{x-y}),\\
I_3&\le C\int_{2r^2}^\infty \varrho r^{-3}e^{-2\nu (t-2r^2)/\varrho^2}\,dt\le C r^{-3} \varrho^3.
\end{align*}
Combining the above estimates together we obtain that
\begin{equation*}							
\abs{\vec G(x,y)} \leq C\left(\frac{\varrho}{\varrho\wedge R_{max}}\right)^3 + C\ln \left(\frac{\varrho\wedge R_{max}}{2\abs{x-y}}\right),
\end{equation*}
which proves the estimate \eqref{eq13.03k} in the case when $0<\abs{x-y}\leq (\varrho\wedge R_{max})/2$.

On the other hand, if $\abs{x-y} \geq r =(\varrho\wedge R_{max})/2$, then we estimate by \eqref{eq618z} and \eqref{eq12.27g}
\begin{align*}							
\abs{\vec G(x,y)} &\leq \int_0^{2r^2}+\int_{2r^2}^\infty \abs{\vec K(t,x,y)}\,dt\\
									\nonumber
& \leq C \int_0^{2r^2} r^{-2} + C \int_{2r^2}^\infty \ \varrho r^{-3}e^{-2\nu (t-2r^2)/\varrho^2}\,dt \leq C+ C r^{-3} \varrho^3.
\end{align*}
Therefore, we obtain the estimate \eqref{eq13.03k} also in this case.
\end{parta}

\begin{partb}
By Lemma~\ref{lem2.19} and Part (a) above, we obtain the existence of the Green's matrix $\vec G(x,y)$ of $L$ in $\Omega$ and the estimate \eqref{eq13.03k} with $C=C(m,\nu,\mu_0, N_1)$.
By Theorem~\ref{thm2}, and observing that $2 R_{max}$ and $R_{max}$ are comparable to each other when $R_{max}<\infty$, we find that if $0<\abs{X-\tilde Y}_p<2 R_{max}$, then we have
\begin{equation}							\label{eq618ak}
\abs{\vec K(X,y)} \le C \left(1 \wedge \frac {d_x} {\abs{X-\tilde Y}_p} \right)^{\mu_0} \left(1 \wedge \frac{d_y} {\abs{X-\tilde Y}_p}\right)^{\mu_0} \abs{X-\tilde Y}_p^{-2}.
\end{equation}
We claim the following estimate holds:
For all $r \in (0, R_{max})$, we have
\begin{equation}							\label{eq12.39p}
\abs{\vec K(t,x,y)} \leq C \varrho r^{-3} \bigset{1\wedge (d_x/r)}^{\mu_0} \bigset{1\wedge (d_y/r)}^{\mu_0}e^{-2\nu (t-4r^2)/\varrho^2},\quad \forall t > 4r^2.
\end{equation}

Let us assume the claim \eqref{eq12.39p} for the moment and prove the estimate \eqref{eq12.01s}.
Similar to \eqref{eq6.21ee}, in the case when $0<\abs{x-y} \leq r:= (\varrho\wedge R_{max})/2$,  we estimate
\[
\abs{\vec G(x,y)} \leq \int_0^{\abs{x-y}^2} + \int_{\abs{x-y}^2}^{4r^2}+\int_{4 r^2}^\infty \abs{\vec K(t,x,y)}\,dt =: I_1+I_2+I_3,
\]
It follows from \eqref{eq618ak} that
\begin{align*}
I_1  &\leq C \left(1 \wedge \frac {d_x} {\abs{x-y}} \right)^{\mu_0} \left(1 \wedge \frac {d_y} {\abs{x-y}} \right)^{\mu_0},\\
I_2  & \leq C \left(1 \wedge \frac {d_x} {\abs{x-y}} \right)^{\mu_0} \left(1 \wedge \frac {d_y} {\abs{x-y}} \right)^{\mu_0} \left\{ 1+ \ln \left(\frac{r}{\abs{x-y}}\right)\right\}.
\end{align*}
Also, by \eqref{eq12.39p}, we obtain
\[
I_3 \le C \varrho^3 r^{-3} \bigset{1\wedge (d_x/r)}^{\mu_0} \bigset{1\wedge (d_y/r)}^{\mu_0}.
\]
Combining the above estimates for $I_1, I_2$, and $I_3$ together, we obtain \eqref{eq12.01s} in this case.

If $\abs{x-y} \geq r =(\varrho\wedge R_{max})/2$, then by using \eqref{eq618ak} and \eqref{eq12.39p}, we estimate
\begin{align*}
\abs{\vec G(x,y)} &\leq \int_0^{4r^2}+\int_{4r^2}^\infty \abs{\vec K(t,x,y)}\,dt\\
& \leq C \bigset{1\wedge (d_x/r)}^{\mu_0} \bigset{1\wedge (d_y/r)}^{\mu_0}+ C \varrho^3 r^{-3} \bigset{1\wedge (d_x/r)}^{\mu_0} \bigset{1\wedge (d_y/r)}^{\mu_0}\\
& \leq  C \varrho^3 r^{-3} \bigset{1\wedge (d_x/r)}^{\mu_0} \bigset{1\wedge (d_y/r)}^{\mu_0}.
\end{align*}
Therefore, we also obtain \eqref{eq12.01s} when $\abs{x-y} \geq (\varrho\wedge R_{max})/2$.

It only remains for us to prove \eqref{eq12.39p}. The strategy is similar to the proof of \eqref{eq3.8yy}.
We first prove the following estimate, which is a ``half'' of the estimate \eqref{eq12.39p}.
\begin{equation}							\label{eq2d.28}
\abs{\vec K(t,x,y)} \leq C \varrho r^{-3} \bigset{1\wedge (d_x/r)}^{\mu_0}e^{-2\nu (t-3r^2)/\varrho^2},\quad \forall t > 3r^2,\quad \forall r\in(0,R_{max}).
\end{equation}
To prove \eqref{eq2d.28}, it is enough to assume that $d_x< r/2$ because otherwise \eqref{eq2d.28} follows from \eqref{eq12.27g}. By Lemma~\ref{lem3.6} applied to $k$-th column of $\vec K(\cdot,y)$, for $k=1,\ldots, m$, we have
\[
\abs{\vec K(t,x,y)} \le C d_x^{\mu_0} r^{-2-\mu_0}\norm{\vec K(\cdot,y)}_{L^2(\cU_{r}^-(X))},\quad \forall t > 3r^2,\quad \forall r\in(0,R_{max}).
\]
Then by \eqref{eq12.27g}, we obtain
\[
\abs{\vec K(t,x,y)} \leq C \varrho d_x^{\mu_0}  r^{-3-\mu_0} e^{-2\nu (t-3r^2)/\varrho^2},\quad \forall t>3 r^2,\quad \forall r\in(0,R_{max}).
\]
Therefore, we proved the estimate \eqref{eq2d.28}.
Next, let us recall from \cite{DK09} that the Dirichlet heat kernel ${}^t\vec K(t,x,y)$ of the adjoint operator $\Lt$ is given by
\[
{}^t\vec K(t,x,y)=\vec K(t,y,x)^T.
\]
By repeating the above argument to ${}^t\vec K(\cdot,x)$, using the above identity, and utilizing the estimate \eqref{eq2d.28} instead of \eqref{eq12.27g}, we obtain \eqref{eq12.39p}.
This completes the proof of Part (b).
\end{partb}
The theorem is proved.
\hfill\qedsymbol

\subsection{Proof of Theorem~\ref{thm6.2g}}

We use the same notation used in the proof of Theorem~\ref{thm6.1g}.
Since $R_{max}=\infty$, we find that \eqref{eq618ak} valid for all $t>0$ and $x\in\Omega$.
For $x, y\in \Omega$ with $x\neq y$, let $r\geq\max (\abs{x-y},d_x\wedge d_y)$ be a number to be fixed later.
We write
\[
\int_0^\infty\abs{\vec K(t,x,y)}\,dt = \int_0^{\abs{x-y}^2} + \int_{\abs{x-y}^2}^{r^2}+ \int_{r^2}^\infty \abs{\vec K(t,x,y)}\,dt =: I_1+I_2+I_3.
\]
Then it follows from \eqref{eq618ak} that
\begin{align*}
I_1  &\leq C \left(1 \wedge \frac{d_x} {\abs{x-y}} \right)^{\mu_0} \left(1 \wedge \frac {d_y} {\abs{x-y}} \right)^{\mu_0},\\
I_2  & \leq C \left(1 \wedge \frac{d_x} {\abs{x-y}} \right)^{\mu_0} \left(1 \wedge \frac {d_y} {\abs{x-y}} \right)^{\mu_0}\cdot 2\ln \left(\frac{r}{\abs{x-y}}\right),\\
I_3 &\leq  C (d_x\wedge d_y)^{\mu_0} \int_{r^2}^\infty t^{-1-\mu_0/2}\,dt=  2C (d_x\wedge d_y)^{\mu_0} \mu_0^{-1}r^{-\mu_0},
\end{align*}
where $C=C(m,\nu, \mu_0, N_1)$.

We define $\vec G(x,y)$ by the formula \eqref{eq:g01}.
It is shown in the proof of \cite[Theorem~2.21]{DK09} that thus defined function $\vec G(x,y)$ is indeed the Green's matrix of $L$ in $\Omega$.

Now, we will choose $r$ as follows.
\begin{casei}
We take $r=d_x\wedge d_y$. Then
\[
\abs{\vec G(x,y)}\leq I_1+I_2+I_3 \leq C+ C \ln \left( d_x\wedge d_y /\abs{x-y}\right)+C,
\]
and thus, \eqref{eq12.01k} is satisfied in this case.
\end{casei}

\begin{caseii}
We take $r=\abs{x-y}$. Then we have
\[
\abs{\vec G(x,y)} \leq I_1+I_3 \leq C \left(1 \wedge \frac{d_x} {\abs{x-y}} \right)^{\mu_0} \left(1 \wedge \frac {d_y} {\abs{x-y}} \right)^{\mu_0}.
\]
It is clear that \eqref{eq12.01k} is satisfied also in this case.
\end{caseii}

\begin{caseiii}
We take $r=\abs{x-y}$, and obtain a better estimate for $I_3$.
\[
I_3 \leq  C d_x^{\mu_0} d_y^{\mu_0}\int_{r^2}^\infty t^{-1-\mu_0}\,dt=  C d_x^{\mu_0} d_y^{\mu_0} \mu_0^{-1}r^{-2\mu_0}.
\]
Therefore, we have
\[
\abs{\vec G(x,y)} \leq I_1+I_3 \leq C \left(1 \wedge \frac{d_x} {\abs{x-y}} \right)^{\mu_0} \left(1 \wedge \frac {d_y} {\abs{x-y}} \right)^{\mu_0},
\]
and thus \eqref{eq12.01k} follows in this case too.
\end{caseiii}

The proof is complete.
\hfill\qedsymbol


\section{Global estimates for systems with H\"older continuous coefficients}		 \label{sec:h}

In this section, we assume that the coefficients $A^{\alpha \beta}_{ij}$ in \eqref{eq0.0} belong to the class $\mathscr{C}^{\mu/2,\mu}$, where $\mu\in (0,1)$, and obtain improved global estimates for the Green's matrix in $\cU=\bR\times\Omega$, where $\Omega$ is a bounded $C^{1,\mu}$ domain.
We remark that these assumptions can be relaxed; see Remark~\ref{dini}.
Recall that we have the following gradient estimate; see e.g, \cite[Chapter 3]{Gi93}.

\begin{lemma}                                \label{lem7.1}
Under the assumptions above, there exist $R_{max}\in (0,\infty]$ and $C>0$ so that for all $X\in\cU=\bR\times\Omega$ and $0<R<R_{max}$, the following is true:
If $\vec u$ is a weak solution of $\cL \vec u=0$ in $\cU_R^-(X)$ vanishing on $\cS_R^-(X)$, then  we have
\begin{equation}				\label{eq12.20}
\abs{D \vec u}_{0;\cU_{R/2}^-(X)}  \leq C R^{-2-n/2} \norm{\vec u}_{L^2(\cU_R^-(X))}.
\end{equation}
Here, the constants $R_{max}$ and $C$ depend on $n, m, \nu$, $\vec A^{\alpha  \beta}$, and $\Omega$.
\end{lemma}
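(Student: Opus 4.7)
My plan is to derive the bound as a consequence of classical parabolic Schauder theory, combined with a flattening of the boundary and a scaling/covering argument. The estimate has two ingredients: a sup bound on $D\vec u$ in terms of $\|\vec u\|_{L^\infty}$ at scale $R$, and the local boundedness estimate (Condition $\LB$), which is already available in this setting (since $\Omega\in C^{1,\mu}$ implies Condition $\CS$, and $\mathscr{C}^{\mu/2,\mu}$ coefficients yield $\LH$, hence $\LB$ by Lemma~\ref{lem2.19}). Specifically, it suffices to prove
\[
\abs{D\vec u}_{0;\cU_{R/2}^-(X)} \leq C R^{-1}\,\norm{\vec u}_{L^\infty(\cU_{3R/4}^-(X))},
\]
because applying $\LB$ on the ball of radius $3R/4$ converts the $L^\infty$ norm on the right-hand side into $R^{-(n+2)/2}\norm{\vec u}_{L^2(\cU_R^-(X))}$, yielding \eqref{eq12.20}.

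I would then split according to the distance $d_x=\dist(x,\partial\Omega)$. In the \emph{interior case} $d_x\geq R/4$, the cylinder $Q_{R/4}^-(X)$ lies in $\cU$, and the standard interior parabolic Schauder estimate for systems with $\mathscr{C}^{\mu/2,\mu}$ coefficients (as in \cite{LSU} or \cite{Gi93}) gives the required sup bound of $D\vec u$ on $Q_{R/8}^-(X)$; a Vitali-type covering of $\cU_{R/2}^-(X)\cap\{d_z\geq R/4\}$ by such cylinders produces the estimate there. In the \emph{boundary case} $d_x<R/4$, I pick a nearest boundary point $x_0\in\partial\Omega$ and flatten $\partial\Omega$ in a $C^{1,\mu}$ neighborhood of $x_0$ via a diffeomorphism $\Psi$ whose Jacobian has entries in $\mathscr{C}^{0,\mu}$. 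Under $\Psi$, the system $\cL\vec u=0$ transforms into a system of the same divergence form with coefficients still in $\mathscr{C}^{\mu/2,\mu}$ and satisfying parabolicity (with a slightly worse constant depending only on $\Omega$), posed on a half-ball cylinder, and $\vec u\circ\Psi^{-1}$ continues to vanish on the flattened portion of the boundary. The classical boundary Schauder estimate for the Dirichlet problem (see \cite{LSU}) then yields a sup bound for the gradient of the transformed solution at scale comparable to $R$, which transfers back to the original coordinates by the bi-Lipschitz character of $\Psi$.

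The threshold $R_{max}$ is determined by the radius on which $\partial\Omega$ admits a uniform $C^{1,\mu}$ flattening, and the constant $C$ additionally depends on $\nu$, on the $\mathscr{C}^{\mu/2,\mu}$ seminorms of $\vec A^{\alpha\beta}$, and on the flattening map; thus both depend on $n, m, \nu, \vec A^{\alpha\beta}$, and $\Omega$ as claimed. The main technical point to be careful about is tracking how the $\mathscr{C}^{\mu/2,\mu}$-norm of the transformed coefficients and the parabolicity constant depend on $\Psi$ (so as to obtain a uniform estimate over boundary points), and verifying that the Schauder estimates in the cited references apply to systems satisfying only the strong parabolicity condition \eqref{eqP-02} rather than the Legendre--Hadamard condition. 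Since the coefficients here satisfy the stronger condition and the necessary \emph{a priori} estimates are well documented in the parabolic theory, this is a matter of assembling standard ingredients rather than establishing new analytic content.
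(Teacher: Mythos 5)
The paper gives no proof of Lemma~\ref{lem7.1}: it is stated as a recollection of a standard Schauder-type gradient estimate with a citation to \cite[Chapter 3]{Gi93}. Your proposal correctly unpacks what that citation entails — interior parabolic Schauder estimates, boundary flattening by a $C^{1,\mu}$ diffeomorphism, and a covering argument — so the substance of your argument coincides with what the paper intends.

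One small point worth cleaning up is the reduction to the sup-norm bound via Condition $\LB$. You justify $\LB$ by the chain ``$\Omega\in C^{1,\mu}\Rightarrow\CS$, plus $\mathscr{C}^{\mu/2,\mu}$ coefficients $\Rightarrow\LH\Rightarrow\LB$,'' but the paper's Lemma~\ref{lem:G-06} only yields $\LH$ under $\CS$ for small $L^\infty$-perturbations of a diagonal system; for a general strongly parabolic system with merely H\"older coefficients, $\LH$ (and hence $\LB$) is itself a consequence of the same boundary Schauder theory you invoke for the gradient bound. So the detour through $\LB$ is not wrong, but it is redundant: the scale-invariant Schauder estimate in a (flattened) cylinder $Q_r$ already controls $\abs{D\vec u}_{0;Q_{r/2}}$ by $Cr^{-1-n/2}\norm{\vec u}_{L^2(Q_r)}$ directly, so you can dispense with the intermediate $L^\infty$ step. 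Your remarks about tracking the dependence of the transformed coefficients on the flattening map, about the uniformity of the flattening radius (which sets $R_{max}$), and about strong parabolicity implying the Legendre--Hadamard hypothesis in the cited Schauder theory are all the right things to watch.
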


The following lemma is an immediate consequence of Lemma \ref{lem7.1}; c.f. Lemma~\ref{lem3.6}.
\begin{lemma}			\label{lem7.2}
Assume the conditions of Lemma \ref{lem7.1}.
For $R \in (0,R_{max})$ and $X \in\cU$ such that $d_X< R/2$,  let $\vec u$  be a weak solution of $\cL\vec u=0$ in $\cU_R^-(X)$ vanishing on $\cS_R^-(X)$.
Then, we have
\begin{equation*}			
\abs{\vec u(X)} \le C d_X R^{-2-n/2}\norm{\vec u}_{L^2(\cU_R^-(X))},
\end{equation*}
where the constant $C$ depends on $n, m, \nu$, $\vec A^{\alpha  \beta}$, and $\Omega$.
\end{lemma}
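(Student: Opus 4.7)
The plan is to mirror the argument of Lemma~\ref{lem3.6}, substituting the gradient bound \eqref{eq12.20} for the H\"older bound supplied there by condition $\LH$. Write $X=(t,x)$ and note that in the cylindrical setting $\cU=\bR\times\Omega$ one has $d_X=\dist(X,\partial_p\cU)=\dist(x,\partial\Omega)$, so the hypothesis $d_X<R/2$ furnishes a point $x_1\in\partial\Omega$ with $\abs{x-x_1}=d_X$. For each sufficiently small $\epsilon>0$, the point $X_\epsilon':=(t-\epsilon^2,x_1)$ lies in $\cS_R^-(X)$, hence $\vec u$ vanishes there (in the trace sense).

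First I would invoke Lemma~\ref{lem7.1} to get $\abs{D\vec u}_{0;\cU_{R/2}^-(X)}\le CR^{-2-n/2}\norm{\vec u}_{L^2(\cU_R^-(X))}$. Bounded gradient plus the vanishing trace on $\cS_{R/2}^-(X)$ implies that $\vec u$ extends continuously to $\overline{\cU_{R/2}^-(X)}$ with zero boundary values on $\cS_{R/2}^-(X)$; this is a standard consequence of Lipschitz regularity in space combined with the equation $\cL\vec u=0$, which controls the time oscillation, and it also legitimizes evaluating $\vec u$ at points of the top face $\{t\}\times(\Omega\cap B_{R/2}(x))$ such as $X$ itself.

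Next, connect $(t-\epsilon^2,x)$ to $X_\epsilon'$ by the straight spatial segment $\gamma_\epsilon(s):=(t-\epsilon^2,x+s(x_1-x))$, $s\in[0,1]$, which lies in $\cU_{R/2}^-(X)$ for $s\in[0,1)$ and closes up at $X_\epsilon'$. By the fundamental theorem of calculus,
\begin{equation*}
\vec u(t-\epsilon^2,x)=\vec u(t-\epsilon^2,x)-\vec u(X_\epsilon')=\int_0^1 D\vec u(\gamma_\epsilon(s))\cdot(x-x_1)\,ds,
\end{equation*}
so $\abs{\vec u(t-\epsilon^2,x)}\le \abs{D\vec u}_{0;\cU_{R/2}^-(X)}\cdot d_X$. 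Letting $\epsilon\downarrow 0$ and using continuity of $\vec u$ at $X$ gives $\abs{\vec u(X)}\le d_X\,\abs{D\vec u}_{0;\cU_{R/2}^-(X)}\le Cd_XR^{-2-n/2}\norm{\vec u}_{L^2(\cU_R^-(X))}$, as required.

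The only delicate point, and the one I would expect to write out with some care, is the continuity of $\vec u$ up to the lateral portion $\cS_R^-(X)$ and the top face $\{t\}\times(\Omega\cap B_{R/2}(x))$; after this is in place, the rest is a one-line integration of the gradient along a segment of length $d_X$. Continuity at the top follows by applying \eqref{eq12.20} on shrinking subcylinders $\cU_{R/2-\delta}^-(X-(\delta^2,0))$, while continuity up to $\cS_R^-(X)$ follows from the spatial Lipschitz estimate combined with the vanishing trace.
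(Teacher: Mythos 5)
Your proof is correct and is essentially the intended argument: the paper states (without proof) that Lemma~\ref{lem7.2} is an ``immediate consequence of Lemma~\ref{lem7.1}; c.f.~Lemma~\ref{lem3.6}'', i.e.\ one should transplant the Lemma~\ref{lem3.6} argument with the gradient bound playing the role of the H\"older bound, which is exactly what you do. Two small remarks. First, your route is slightly leaner than literally applying Lemma~\ref{lem3.6} with $\mu_0=1$: by choosing the spatial segment $\gamma_\epsilon$ at the fixed time slice $t-\epsilon^2$ you only need the \emph{spatial} gradient bound \eqref{eq12.20}, and never need to verify the time-H\"older part of the parabolic seminorm $[\tilde{\vec u}]_{1/2,1}$, which is what a verbatim invocation of $\LH$ with $\mu_0=1$ would require. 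Second, the ``delicate point'' you flag (continuity of $\vec u$ up to $\cS_{R/2}^-(X)$ and up to the top face, and the fact that the continuous representative actually vanishes pointwise on the lateral boundary) is genuinely there, but in the setting of Section~\ref{sec:h} the coefficients are in $\mathscr C^{\mu/2,\mu}$ and $\Omega$ is a bounded $C^{1,\mu}$ domain, so classical parabolic Schauder theory already gives that $\vec u$ is classical up to the boundary; it would be cleaner to cite that directly than to derive continuity from the Lipschitz-in-space bound plus the equation. With that citation in place, your one-line integration of $D\vec u$ along the segment of length $d_X$ to the nearest boundary point gives exactly the claimed estimate.
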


Now we state the main result of this section.

\begin{theorem}			\label{thm7.3}
Assume the conditions of Lemma \ref{lem7.1}.
Let $\vec \cG(X,Y)=\vec \cG(t,x,s,y)$ be the Green's matrix of $\cL$ in $\cU=\bR\times\Omega$ and let $\delta(X,Y)$ be defined as in \eqref{eq3.6mm}.
Then for all $X, Y \in \cU$ with $X\neq Y$, we have
\begin{equation}			\label{eq12.47}
\abs{\vec \cG(t,x,s,y)}\leq C\,\chi_{(0,\infty)}(t-s)\cdot \delta(X,Y)  \left\{(t-s)\wedge R_{max}^2\right\}^{-n/2}\exp\left(-\kappa \frac{\abs{x-y}^2}{t-s} \right),
\end{equation}
where $\kappa=\kappa(\nu)$ and $C$ is a constant depending on $n, m, \nu$, $\vec A^{\alpha  \beta}$, and $\Omega$.
\end{theorem}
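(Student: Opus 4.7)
The plan is to adapt the proof of Theorem~\ref{thm2} almost verbatim, with the boundary Hölder estimate of condition~$\LH$ upgraded to the boundary Lipschitz-type estimate of Lemma~\ref{lem7.2}. Since Lemma~\ref{lem7.1} is strictly stronger than the Hölder bounds in conditions $\IH$ and $\LH$ (one can pick any $\mu_0\in(0,1]$), the hypotheses of Theorems~\ref{thm1} and~\ref{thm2} are satisfied, so $\vec\cG$ exists and obeys the basic Gaussian bound
\[
\abs{\vec \cG(t,x,s,y)}\le C_0\chi_{(0,\infty)}(t-s)\bigl\{(t-s)\wedge R_{max}^2\bigr\}^{-n/2}\exp\bigl\{-\kappa\abs{x-y}^2/(t-s)\bigr\}.
\]

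First I would prove the one-sided improvement
\[
\abs{\vec \cG(t,x,s,y)}\le C\chi_{(0,\infty)}(t-s)\,\delta_1(X,Y)\bigl\{(t-s)\wedge R_{max}^2\bigr\}^{-n/2}\exp\bigl\{-\kappa\abs{x-y}^2/(4(t-s))\bigr\},
\]
where $\delta_1(X,Y)=1\wedge d_X/(R_{max}\wedge\abs{X-Y}_p)$, mirroring \eqref{eq22.00k}. Set $R=(R_{max}\wedge\abs{X-Y}_p)/4$; the estimate is trivial when $d_X\ge R/2$, and otherwise Lemma~\ref{lem7.2} applied column by column to $\vec\cG(\cdot,Y)$ yields
\[
\abs{\vec \cG(X,Y)}\le Cd_X R^{-2-n/2}\norm{\vec \cG(\cdot,Y)}_{L^2(\cU_R^-(X))},
\]
which replaces the analogue of \eqref{eq17.24} but with exponent~$1$ on $d_X$ rather than $\mu_0$. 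Then run the same three-case analysis as in the proof of Theorem~\ref{thm2}: in Case~1 ($\abs{x-y}\le\sqrt{t-s}<R_{max}$) use the pointwise bound $\abs{\vec\cG}\le C\abs{X-Y}_p^{-n}$ from \eqref{eq5.18}; in Case~2 ($\sqrt{t-s}<\abs{x-y}\wedge R_{max}$) use the observation that the Gaussian factor at any $Z\in\cU_{2R}^-(X)$ is comparable to that at $X$ because $\abs{z-y}\ge\abs{x-y}/2$; and in Case~3 ($R_{max}\le\sqrt{t-s}$) use the basic Gaussian bound together with the comparability of exponential factors over $\cU_{2R}^-(X)$ when $t-s\ge 16R^2$. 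The arithmetic is identical to the proof of Theorem~\ref{thm2}, except that each occurrence of $d_X^{\mu_0}R^{-\mu_0}$ is replaced by $d_X R^{-1}$, producing the linear exponent on $\delta_1$.

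To upgrade the half-improvement to \eqref{eq12.47}, I would repeat the argument applied to the Green's matrix $^t\vec\cG$ of the adjoint operator $\cLt$. The coefficients $^t\!A^{\alpha\beta}_{ij}=A^{\beta\alpha}_{ji}$ lie in the same Hölder class and satisfy \eqref{eqP-02}--\eqref{eqP-03} with the same $\nu$, so Lemmas~\ref{lem7.1} and~\ref{lem7.2} hold equally well for $\cLt$. Combined with the identity $\vec\cG(X,Y)={}^t\vec\cG(Y,X)^T$ from \eqref{eq2.20f}, and using the half-improvement just established in place of the basic Gaussian bound, this second pass produces the $d_Y$ factor and yields \eqref{eq12.47} after absorbing the halving of $\kappa$ into the constant.

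The argument poses no real obstacle; it is a transcription of the proof of Theorem~\ref{thm2} with Lemma~\ref{lem3.6} replaced by Lemma~\ref{lem7.2}. The only item that deserves attention is verifying that the linear $d_X$ improvement survives Case~3, where $R=R_{max}/4$ is independent of $\abs{X-Y}_p$ and the factor $d_X/R_{max}$ must be correctly absorbed into $\delta_1(X,Y)$, and that the symmetry needed to iterate on $\cLt$ is available — which is immediate from the structure of the Hölder class.
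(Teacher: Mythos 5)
Your proposal follows the paper's argument essentially verbatim: the paper notes that Lemma~\ref{lem7.1} implies $\LH$ with $\mu_0=1$ (giving existence of $\vec\cG$ and the basic Gaussian bound), and then simply repeats the proof of Theorem~\ref{thm2} with Lemma~\ref{lem7.2} substituted for Lemma~\ref{lem3.6}, including the one-sided $\delta_1$ improvement via the three-case analysis and the second pass on the adjoint operator using \eqref{eq2.20f}. Your transcription is a correct unpacking of the paper's terse instruction to ``repeat the argument,'' and the observations you flag at the end (Case~3 bookkeeping with $R=R_{max}/4$, and the availability of Lemma~\ref{lem7.2} for $\cLt$) are precisely the points one needs to check.
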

\begin{proof}
We note that Lemma \ref{lem7.1} implies the condition $\LH$ with $\mu_0=1$, which in turn gives the existence of the Green's matrix $\vec \cG(X,Y)$ of $\cL$ in $\cU$.
Now we can repeat the argument in the proof of Theorem \ref{thm2} by using Lemma~\ref{lem7.2} instead of Lemma \ref{lem3.6}.
\end{proof}

\begin{remark} \label{dini}
Following \cite[Section 5]{Lieberman87}, the same gradient estimates \eqref{eq12.20} is available for systems with Dini continuous coefficients in Dini domains, which is more general than the H\"older conditions; see \cite{KN2} for the definition of Dini domains.
Since we only used the gradient estimate \eqref{eq12.20} for weak solutions in the proof of Theorem \ref{thm7.3}, the conclusion of the theorem is still available for systems with Dini continuous coefficients in Dini domains.
We would like to hereby thank YanYan Li for helpful discussion regarding the gradient estimates for systems with Dini continuous coefficients.
We remark that a weaker form of the estimate \eqref{eq12.47} for scalar Green's functions was obtained in \cite{Cho} by using the maximum principle argument.
\end{remark}

\section{Appendix}

\begin{lemma}				\label{lem2.19}
Let $\cU=\bR\times \Omega$ and assume the condition $\LH$.
Then, the condition $\LB$ is satisfied with the same $R_{max}$ and $N_0=N_0(n,m,\mu_0,N_1)$.
\end{lemma}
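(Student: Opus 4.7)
The plan is to convert the Hölder estimate on the zero extension $\tilde{\vec u}$ into a pointwise $L^\infty$ bound by a standard averaging argument, exploiting the fact that the Hölder seminorm of $\tilde{\vec u}$ is measured on the full parabolic cylinder $Q_{R/2}^-(X)$, not just on $\cU_{R/2}^-(X)$. I will carry out the argument for part i); part ii) is identical, applied to $\cLt$.

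First, fix $X \in \cU$, $R \in (0,R_{max})$, and a weak solution $\vec u$ of $\cL\vec u=0$ in $\cU_R^-(X)$ vanishing on $\cS_R^-(X)$, and set $\tilde{\vec u} = \chi_{\cU_R^-(X)}\vec u$. Condition $\LH$ gives, for every $X',X'' \in Q_{R/2}^-(X)$,
\[
\bigabs{\tilde{\vec u}(X') - \tilde{\vec u}(X'')} \le N_1 R^{-\mu_0}\,\abs{X'-X''}_p^{\mu_0}\left(\fint_{Q_R^-(X)}\abs{\tilde{\vec u}}^2\right)^{1/2}.
\]
Since the parabolic diameter of $Q_{R/2}^-(X)$ is at most $R$, averaging this inequality over $X'' \in Q_{R/2}^-(X)$ and using the triangle inequality yields
\[
\bigabs{\tilde{\vec u}(X')} \le \fint_{Q_{R/2}^-(X)} \abs{\tilde{\vec u}} + N_1 \left(\fint_{Q_R^-(X)}\abs{\tilde{\vec u}}^2\right)^{1/2},
\]
where the factors of $R^{\mu_0}$ and $R^{-\mu_0}$ cancel (this is where the otherwise hidden dependence on $\mu_0$ dies, since the parabolic diameter of $Q_{R/2}^-(X)$ does not exceed $R$ regardless of $\mu_0$).

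Next, I will bound the first term via Cauchy--Schwarz and a volume comparison. Using $Q_{R/2}^-(X)\subset Q_R^-(X)$ together with $\abs{Q_R^-(X)}/\abs{Q_{R/2}^-(X)} = 2^{n+2}$, I get
\[
\fint_{Q_{R/2}^-(X)}\abs{\tilde{\vec u}} \le \left(\fint_{Q_{R/2}^-(X)}\abs{\tilde{\vec u}}^2\right)^{1/2} \le 2^{(n+2)/2}\left(\fint_{Q_R^-(X)}\abs{\tilde{\vec u}}^2\right)^{1/2}.
\]
Combining, and writing the average as an $L^2$-norm over $\cU_R^-(X)$ (since $\tilde{\vec u}$ vanishes outside this set), I obtain
\[
\bigabs{\tilde{\vec u}(X')} \le (2^{(n+2)/2}+N_1)\left(\fint_{Q_R^-(X)}\abs{\tilde{\vec u}}^2\right)^{1/2} \le C(n,N_1)\, R^{-(n+2)/2}\norm{\vec u}_{L^2(\cU_R^-(X))}.
\]

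Finally, any $X' \in \cU_{R/2}^-(X)$ lies in $Q_{R/2}^-(X)$ and satisfies $\tilde{\vec u}(X')=\vec u(X')$, so taking the supremum over $X'\in \cU_{R/2}^-(X)$ gives part i) of $\LB$ with $N_0 = N_0(n,N_1)$. The same argument applied to weak solutions of $\cLt\vec u=0$ on $\cU_R^+(X)$, using part ii) of $\LH$, yields part ii) of $\LB$. I do not foresee any real obstacle, since every step is either an elementary inequality or a routine volume/scaling computation; the one place calling for care is to ensure the Hölder seminorm really controls oscillation on all of $Q_{R/2}^-(X)$, which is exactly what the zero extension $\tilde{\vec u}$ is set up to guarantee.
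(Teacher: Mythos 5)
Your proof is correct and follows essentially the same route as the paper's: both convert the Hölder seminorm bound of $\LH$ into an $L^\infty$ bound by applying the triangle inequality and averaging over a second variable in a comparable cylinder. The only (beneficial) cosmetic difference is that you average over $X''\in Q_{R/2}^-(X)$ rather than over a cylinder centered at the variable point, which keeps both arguments of the Hölder difference inside $Q_{R/2}^-(X)$, exactly where $\LH$ supplies the seminorm bound.
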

\begin{proof}
We shall only prove {\em i)} since the proof of {\em ii)} is very similar.
Let $\vec u$ be a weak solution of $\cL \vec u=0$ in $\cU_R^-(X)$ vanishing on $\cS_R^-(X)$, where $X\in \cU$ and $R\in (0,R_{max})$.
By using the triangle inequality, for all $Y\in Q_{R/2}^-(X)$ and $Z\in Q_{R/2}^-(Y)$, we have
\[
\abs{\tilde{\vec u}(Y)}^2 \leq 2\abs{\tilde{\vec u}(Y)-\tilde{\vec u}(Z)}^2+2\abs{\tilde{\vec u}(Z)}^2 \leq C R^{2\mu_0} [\tilde{\vec u}]_{\mu_0/2,\mu_0;Q_R^-(X)}^2+C\abs{\tilde{\vec u}(Z)}^2.
\]
Then by taking average over $Z\in Q_{R/2}^-(Y)$ and using $\LH$, we obtain
\[
\norm{\vec u}_{L^\infty(\cU_{R/2}^-(X))}^2 \le C R^{2\mu_0} [\tilde{\vec u}]_{\mu_0/2,\mu_0;Q_R^-(X)}^2+C R^{-n-2}\norm{\tilde{\vec u}}_{L^2(Q_R^-(X))}^2 \leq C R^{-n-2}\norm{\vec u}_{L^2(\cU_{R}^-(X))}^2,
\]
where $C=C(n,m,\mu_0, N_1)$.
The proof is complete.
\end{proof}

\begin{lemma}							\label{lem8.0a}
Assume that the operator $\cL$ is given as in \eqref{eq2.01aa} and satisfies the conditions \eqref{eqP-02} and \eqref{eqP-03}. Then the condition $\IH$ is equivalent to saying that the operator $\cL$ and its adjoint $\cLt$ satisfy the property $\PH$ in \cite{CDK}.
\end{lemma}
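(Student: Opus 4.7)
The plan is to establish the equivalence by unpacking both conditions in terms of oscillation decay on parabolic cylinders and invoking a Campanato-type characterization of parabolic Hölder continuity. Both $\IH$ and $\PH$ are local interior statements about weak solutions of $\cL\vec u=0$ and $\cLt\vec u=0$; they differ only in whether Hölder continuity is phrased as a pointwise seminorm bound controlled by an $L^2$-mean (the form used in $\IH$) or as a dyadic/oscillation decay estimate (the typical form of $\PH$). Throughout, it suffices to treat the forward-time operator $\cL$, since the argument for $\cLt$ is identical modulo reversing the cylinders $Q_R^-$ to $Q_R^+$.

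First I would carry out the direction $\IH \Rightarrow \PH$, which is essentially a triviality: the pointwise seminorm bound in $\IH$ controls $\abs{\vec u(X)-\vec u(Y)}$ by $C_0 R^{-\mu_0}(\fint_{Q_R^-(X)}\abs{\vec u}^2)^{1/2} \abs{X-Y}_p^{\mu_0}$ on $Q_{R/2}^-(X)$, which upon averaging immediately yields the Campanato/oscillation decay statement that constitutes $\PH$ (after adjusting $R_c$ by a harmless constant factor and possibly subtracting a mean to absorb constants). No real work is required here beyond rewriting the inequality.

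The substantive direction is $\PH \Rightarrow \IH$. Here the plan is to use the classical iteration argument: $\PH$ provides a geometric decay of the oscillation $\osc_{Q_{\eta^k R}^-(X)} \vec u$ in $k$, which by summing a geometric series gives a pointwise modulus of continuity of Hölder type at the apex $X$. Translating the apex and redoing the estimate at every $X' \in Q_{R/2}^-(X)$ (this is the place where one must be careful that $R_c \wedge d_{X'}$ stays comparable to $R$ when $X' \in Q_{R/2}^-(X)$), one upgrades the pointwise modulus at a single point to the full seminorm bound $[\vec u]_{\mu_0/2,\mu_0;Q_{R/2}^-(X)}$. The $L^2$-mean on the right-hand side of $\IH$ is produced from the oscillation by the usual trick of subtracting the mean $\bar{\vec u}_R$ over $Q_R^-(X)$ and applying Poincaré, together with the observation that the constant function $\bar{\vec u}_R$ contributes nothing to the seminorm.

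The main obstacle I anticipate is purely bookkeeping: verifying that the radii and apexes in $\PH$ (typically centered at an arbitrary $X \in \cU$ with $R < R_c \wedge d_X$) can be translated to all apexes $X' \in Q_{R/2}^-(X)$ with a uniformly comparable admissible radius, so that the iteration at each $X'$ uses the same quantitative data. This is handled by shrinking $R_c$ by a fixed factor (say by $1/4$) and using the inclusion $Q_{R/4}^-(X')\subset Q_{R/2}^-(X)$ whenever $X'\in Q_{R/4}^-(X)$. Once the seminorm estimate on $Q_{R/4}^-(X)$ is obtained this way, a standard covering/rescaling step promotes it to the seminorm estimate on $Q_{R/2}^-(X)$ as stated in $\IH$, at the cost only of enlarging $C_0$. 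The same scheme applied to $\cLt$ on forward cylinders $Q_R^+$ finishes the proof.
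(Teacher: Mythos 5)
Your proposal rests on a misidentification of the property $\PH$. You describe $\PH$ as an ``oscillation decay estimate'' for the solution $\vec u$ itself, and you build both directions of the argument on that premise: you claim that $\IH \Rightarrow \PH$ is ``essentially a triviality'' obtained ``upon averaging'' the H\"older bound, and that $\PH \Rightarrow \IH$ is the ``classical iteration argument'' summing a geometric decay of $\osc \vec u$. But in \cite{CDK} (and as restated explicitly in the paper's proof), $\PH$ is a decay estimate for the \emph{Dirichlet energy}, namely
\[
\int_{Q_\rho^-(X)} \abs{D\vec u}^2 \le C_0 \left(\frac{\rho}{r}\right)^{n+2\mu_0}\int_{Q_r^-(X)} \abs{D\vec u}^2, \quad 0<\rho<r\le R,
\]
not an oscillation bound for $\vec u$. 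With that corrected, neither direction is a rewriting exercise: passing between a gradient-energy decay and a H\"older seminorm bound requires, in one direction, the Caccioppoli (energy) inequality to bound $\int_{Q_\rho^-}\abs{D\vec u}^2$ by a mean-oscillation of $\vec u$ on a slightly larger cylinder, and in the other direction, the parabolic Poincar\'e inequality to convert the gradient decay into a Campanato-type mean-oscillation decay before invoking the Campanato characterization of H\"older continuity.

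Concretely, your argument never invokes the energy inequality, which is indispensable in both directions. For $\IH \Rightarrow \PH$, the paper's chain is: energy inequality ($\int_{Q_\rho^-}\abs{D\vec u}^2 \lesssim \rho^{-2}\int_{Q_{2\rho}^-}\abs{\vec u - (\vec u)_{Q_{2\rho}^-}}^2$), then $\IH$ to control the oscillation, then Poincar\'e to return to $\int_{Q_r^-}\abs{D\vec u}^2$ on the right. For $\PH \Rightarrow \IH$, the chain is: Poincar\'e to control mean oscillation by $\int\abs{D\vec u}^2$, then $\PH$ decay, then energy inequality to return to $\fint\abs{\vec u}^2$, then Campanato's characterization. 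Your proposal, as written, neither states nor closes these loops, and because it aims at the wrong target (oscillation decay of $\vec u$ rather than decay of $\int\abs{D\vec u}^2$), the steps you do describe would not establish the equivalence claimed in the lemma. The bookkeeping about shrinking $R_c$ and covering is fine and matches the paper's concluding remark, but it is not where the substance lies.
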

\begin{proof}
We shall only prove that part i) of $\IH$ is equivalent to the property $\PH$ for $\cL$ since the proof that part ii) of $\IH$ is equivalent to the property $\PH$ for $\cLt$ is very similar.

Let  $\vec u$ be a weak solution of $\cL \vec u=0$ in $Q_R^-(X)$, where $X\in \cU$ and $0<R<R_c\wedge d_X$.
Recall that the property $\PH$ in \cite{CDK} for $\cL$ is as follows: We have
\begin{equation}						\label{eq8.00dk}
\int_{Q_\rho^-(X)} \abs{D\vec u}^2\leq C_0\left(\frac{\rho}{r}\right)^{n+2\mu_0}\int_{Q_r^-(X)}\abs{D\vec u}^2, \quad \forall 0<\rho<r\leq R.
\end{equation}

First, we assume that the condition $\IH$ holds and prove \eqref{eq8.00dk}.
We may assume that $\rho<r/4$; otherwise, \eqref{eq8.00dk} is trivial.
We denote
\begin{equation*}						
(\vec u)_{Q_r^-}=\fint_{Q_r^-(X)} \vec u.
\end{equation*}
Notice that we may assume, by replacing $\vec u$ by $\vec u-(\vec u)_{Q_r^-}$, if necessary, that $(\vec u)_{Q_r^-}=0$.
From the energy inequality (see e.g., \cite[\S III.2]{LSU}), the condition $\IH$, and then parabolic Poincar\'e's inequality (see e.g., \cite[Lemma~2.4]{CDK}), it follows (recall that $\rho<r/4$)
\begin{align*}
\int_{Q_\rho^-(X)}\abs{D\vec u}^2 & \leq C \rho^{-2}\int_{Q_{2\rho}^-(X)} \abs{\vec u-(\vec u)_{Q_{2\rho}^-}}^2
 \leq C \rho^{-2}\int_{Q_{2\rho}^-(X)}\fint_{Q_{2\rho}^-(X)} \abs{\vec u(Y)-\vec u(Z)}^2\,dZ\,dY\\
&\leq C \rho^{n+2\mu_0}[\vec u]_{\mu_0/2,\mu_0; Q_{2\rho}^-(X)}^2 \leq C\rho^{n+2\mu_0} r^{-2\mu_0}\fint_{Q_r^-(X)}\abs{\vec u}^2\\
& \leq C\left(\frac{\rho}{r}\right)^{n+2\mu_0} \int_{Q_r^-(X)}\abs{D\vec u}^2,
\end{align*}
where $C=C(n,m,\nu,\mu_0,C_0)$. We have derived the property $\PH$ for $\cL$.

Next, we assume the property $\PH$ for $\cL$ and prove the condition $\IH$.
By the parabolic Poincar\'e's inequality (see e.g., \cite[Lemma~2.4]{CDK}) and the property $\PH$, and then the energy inequality, we obtain for all $Y\in Q_{R/4}^-(X)$ and $r\in (0,R/4]$ that
\begin{align*}
\int_{Q_r^-(Y)} \bigabs{\vec u-(\vec u)_{Q_r^-}}^2 &\leq C r^2\int_{Q_r^-(Y)}\abs{D\vec u}^2 \leq C r^2 \left(\frac{r}{R}\right)^{n+2\mu_0}  \int_{Q_{R/4}^-(Y)}\abs{D\vec u}^2\\
&\leq C\left(\frac{r}{R}\right)^{n+2+2\mu_0} \int_{Q_{R/2}^-(Y)} \abs{\vec u}^2 \leq C r^{n+2+2\mu_0} R^{-2\mu_0} \fint_{Q_R^-(X)} \abs{\vec u}^2,
\end{align*}
where $C=C(n,m,\nu,\mu_0,C_0)$. Then by the Campanato's characterization of H\"older continuous functions (see e.g., \cite[Lemma~2.5]{CDK}), we obtain
\[
[\vec u]_{\mu_0/2,\mu_0;Q_{R/4}^-(X)}^2 \leq C R^{-2\mu_0} \fint_{Q_R^-(X)} \abs{\vec u}^2.
\]
Then, the above inequality together with a standard covering argument yields part i) of the condition $\IH$.
The proof is complete.
\end{proof}

\begin{lemma}							\label{lem8.1}
Let $\cU=\bR\times \Omega$ and assume the condition $\LHP$ in Remark~\ref{rmk:P-03}.
Then the condition $\LH$ is satisfied.
\end{lemma}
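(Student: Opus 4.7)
The plan is to adapt the argument for the direction ``$\PH\Rightarrow\IH$'' in Lemma~\ref{lem8.0a} to the boundary setting. Fix $X\in\cU$, $R\in(0,R_{max})$, and let $\vec u$ be a weak solution of $\cL\vec u=0$ in $\cU_R^-(X)$ vanishing on $\cS_R^-(X)$. Because $\vec u$ vanishes on $\cS_R^-(X)$, the zero extension $\tilde{\vec u}:=\chi_{\cU_R^-(X)}\vec u$ belongs to $H^1(Q_R^-(X))^m$ with weak gradient $D\tilde{\vec u}=\chi_{\cU_R^-(X)}D\vec u$. By Campanato's characterization \cite[Lemma~2.5]{CDK}, to obtain the H\"older bound in $\LH$ it suffices to establish, for every $Y\in Q_{R/2}^-(X)$ and $r\in(0,R/4]$, the decay
\begin{equation*}
\inf_{c\in\bR^m}\int_{Q_r^-(Y)}\bigabs{\tilde{\vec u}-c}^2 \leq C\, r^{n+2+2\mu_0}\, R^{-2\mu_0}\,\fint_{Q_R^-(X)}\abs{\tilde{\vec u}}^2
\end{equation*}
with $C=C(n,m,\nu,\mu_0,C_0)$.

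Combining $\LHP$ applied at $Y$ (or at a limit of approximants to $Y$ from $\cU$) with the energy inequality on $\cU_{R/2}^-(Y)\subset\cU_R^-(X)$ already yields the Morrey decay
\begin{equation*}
\int_{Q_r^-(Y)}\abs{D\tilde{\vec u}}^2 \;=\; \int_{\cU_r^-(Y)}\abs{D\vec u}^2 \;\leq\; C\,r^{n+2\mu_0}\,R^{-n-2\mu_0-2}\int_{Q_R^-(X)}\abs{\tilde{\vec u}}^2.
\end{equation*}
I would then split into two regimes according to $d_Y=\dist(Y,\partial_p\cU)$. In the \emph{interior regime} $d_Y\geq 4r$, $Q_r^-(Y)\subset\cU$ and $\tilde{\vec u}\equiv\vec u$ on $Q_r^-(Y)$ is a weak solution, so the parabolic Poincar\'e inequality \cite[Lemma~2.4]{CDK} taken with $c=(\tilde{\vec u})_{Q_r^-(Y)}$ combines with the displayed Morrey bound to give the desired Campanato decay, exactly as in the proof of Lemma~\ref{lem8.0a}. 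In the \emph{boundary regime} $d_Y<4r$, one takes $c=0$; then $\int_{Q_r^-(Y)}\abs{\tilde{\vec u}}^2=\int_{\cU_r^-(Y)}\abs{\vec u}^2$, and a Poincar\'e-type inequality for $\tilde{\vec u}$ (exploiting that $\tilde{\vec u}$ vanishes on $Q_r^-(Y)\setminus\cU$) bounds this by $Cr^2\int_{Q_r^-(Y)}\abs{D\tilde{\vec u}}^2$, which is again controlled by the Morrey bound. Once both regimes are covered, a standard covering argument converts the Campanato decay into the H\"older estimate required by $\LH$ with $N_1=N_1(n,m,\nu,\mu_0,C_0)$ and the same $\mu_0$ and $R_{max}$.

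The principal obstacle is justifying the Poincar\'e-type inequality in the boundary regime with a constant independent of the local geometry of $\partial\Omega$. The natural route is to apply the classical spatial Poincar\'e at each time slice $\tau$ of $Q_r^-(Y)$ on which $\tilde{\vec u}(\tau,\cdot)$ vanishes on a subset of $B_r(y)$ whose measure is comparable to $\abs{B_r(y)}$, and then integrate in $\tau$. This requires a lower bound $\abs{B_r(y)\setminus\Omega}\geq cr^n$, which does not follow solely from $d_Y<4r$ for arbitrary $\Omega$; however, as $\LHP$ is only meaningful in the presence of some boundary regularity (e.g., condition $\CS$ in the scalar case, or $C^1$ regularity for the $\VMO_x$ application), this geometric bound is available whenever the conclusion of Lemma~\ref{lem8.1} is actually used. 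A secondary point is that $\LHP$ is stated only for centers in $\cU$; both sides of its inequality are continuous in the center, so it extends to centers in $\overline{\cU}$ by a limiting argument, while the case $Y\in Q_{R/2}^-(X)\setminus\overline{\cU}$ is trivial because $\tilde{\vec u}$ vanishes in a neighborhood of $Y$.
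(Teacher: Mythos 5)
Your decomposition of the problem is sound, and the reduction to a Campanato decay estimate
\[
\inf_{c\in\bR^m}\int_{Q_r^-(Y)}\bigabs{\tilde{\vec u}-c}^2 \leq C\,r^{n+2+2\mu_0}\,R^{-2\mu_0}\fint_{Q_R^-(X)}\abs{\tilde{\vec u}}^2
\]
is the right target, as is the interior-regime argument. But there is a genuine gap in the boundary regime, one you yourself identify but do not repair: taking $c=0$ when $d_Y<4r$ requires a Poincar\'e inequality of the form $\int_{Q_r^-(Y)}\abs{\tilde{\vec u}}^2\leq Cr^2\int_{Q_r^-(Y)}\abs{D\tilde{\vec u}}^2$, which in turn needs a complement density bound $\abs{B_r(y)\setminus\Omega}\geq cr^n$. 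Lemma~\ref{lem8.1} assumes no such thing on $\Omega$; it is a pure operator-level implication from $\LHP$ to $\LH$. Appealing to the geometric regularity ``available whenever the conclusion is used'' is not a proof of the lemma as stated, and would weaken the logical architecture of the paper (for example, the deduction $\LHP\Rightarrow\LH$ in Remark~\ref{rmk:P-03} is presented as unconditional).

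The paper closes exactly this gap by \emph{never} choosing $c=0$: in both regimes one takes $c=(\tilde{\vec u})_{Q_r^-(Y)}$, the average of the zero extension over the full Euclidean cylinder, and controls the resulting mean oscillation via a dedicated parabolic Poincar\'e inequality for $\tilde{\vec u}$, namely Lemma~\ref{lem8.1ap}:
\[
\int_{Q_r^-(Y)}\bigabs{\tilde{\vec u}-(\tilde{\vec u})_{Q_r^-}}^2 \leq C\,r^2\int_{\cU_r^-(Y)}\abs{D\vec u}^2.
\]
This is proved by Struwe's device: apply the spatial Poincar\'e inequality slice-by-slice against a weighted mean $\vec\beta(t)=\delta\int_{B_r}\zeta\,\tilde{\vec u}(t,\cdot)$, then control the time increments $\abs{\vec\beta(t)-\vec\beta(s)}$ by testing the equation against the cutoff $\zeta$. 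The slice-wise Poincar\'e needs only that $\tilde{\vec u}(t,\cdot)\in W^{1,2}(B_r)$ with $D\tilde{\vec u}=\chi_{\Omega}D\vec u$ (automatic from the vanishing on $\cS_r^-$) and a \emph{weighted} mean rather than a zero set, so no geometry of $\partial\Omega$ enters; the time regularity comes from the PDE, not the boundary. With this lemma in hand, the chain $\text{Poincar\'e}\Rightarrow\text{Morrey decay from }\LHP\Rightarrow\text{energy inequality}$ runs uniformly for all $Y\in Q_{R/4}^-(X)$ and $r\leq R/4$, no case distinction required, and Campanato's characterization plus a covering argument finish the proof. The two side issues you raise (centers $Y\notin\cU$) are handled trivially in this framework: if $\cU_r^-(Y)=\emptyset$ the decay is vacuous, and otherwise the construction in Lemma~\ref{lem8.1ap} and the statement of $\LHP$ apply verbatim without a continuity argument.
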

\begin{proof}
We shall only demonstrate that part i) of $\LHP$ implies part i) of $\LH$ since the proof that part ii) of $\LHP$ implies part ii) of $\LH$ is very similar.

Let $\vec u$ be a weak solution of $\cL \vec u=0$ in $\cU_R^-(X)$ vanishing on $\cS_R^-(X)$, where $X\in \cU$ and $R\in (0,R_{max})$.
Notice that from $\LHP$ we have for all $Y\in Q_{R/4}^-(X)$ and $r\in (0,R/4]$ that
\begin{equation}						\label{eq8.01wt}
\int_{\cU_r^-(Y)} \abs{D \vec u}^2\leq C \left(\frac{r}{R}\right)^{n+2\mu_0}\int_{\cU_{R/4}^-(Y)}\abs{D\vec u}^2,
\end{equation}
where $C=4^{n+2\mu_0}C_0$.
By Lemma~\ref{lem8.1ap} below, the above estimate \eqref{eq8.01wt},  and the energy inequality, we have for all $Y\in Q_{R/4}^-(X)$ and $r\in (0,R/4]$ that
\begin{align*}
\int_{Q_r^-(Y)} \bigabs{\tilde{\vec u}-(\tilde{\vec u})_{Q_r^-}}^2 &\leq C r^2 \int_{\cU_r^-(Y)} \abs{D \vec u}^2\leq C r^2 \left(\frac{r}{R}\right)^{n+2\mu_0}\int_{\cU_{R/4}^-(Y)}\abs{D\vec u}^2\\
&\leq C\left(\frac{r}{R}\right)^{n+2+2\mu_0} \int_{\cU_{R/2}^-(Y)} \abs{\vec u}^2 \leq C r^{n+2+2\mu_0} R^{-2\mu_0} \fint_{Q_R^-(X)} \abs{\tilde{\vec u}}^2,
\end{align*}
where $C=C(n,m,\nu,\mu_0,C_0)$.
Then by the Campanato's characterization of H\"older continuous functions (see e.g., \cite[Lemma~2.5]{CDK}), we obtain
\[
[\tilde{\vec u}]_{\mu_0/2,\mu_0;Q_{R/4}^-(X)}^2 \leq C R^{-2\mu_0} \fint_{Q_R^-(X)} \abs{\tilde{\vec u}}^2.
\]
Then, the above inequality together with a standard covering argument yields part i) of the condition $\LH$.
The proof is complete.
\end{proof}

\begin{lemma}					\label{lem8.1ap}
Let $\cU=\bR\times\Omega$ and $\vec u$ be a weak solution of $\cL \vec u=\vec f$ in $\cU_r^-(X)$ vanishing on $\cS_r^-(X)$, where $\vec f \in L^\infty(\cU_r^-(X))$.
Then we have
\begin{equation}		\label{eq8.6ap}
\int_{Q_r^-(X)} \bigabs{\tilde{\vec u}-(\tilde{\vec u})_{Q_r^-}}^2 \leq C r^2\int_{\cU_r^-(X)}\abs{D\vec u}^2 +C r^{2-n}\norm{\vec f}_{L^1(\cU_r^-(X))}^2;\quad\tilde{\vec u}=\chi_{\cU_R^-(X)} \vec u,
\end{equation}
where $(\tilde{\vec u})_{Q_r^-}=\fint_{Q_r^-(X)} \tilde{\vec u}$ and $C=C(n,m,\nu)$.
\end{lemma}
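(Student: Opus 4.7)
My plan is to adapt the proof of the interior parabolic Poincar\'e inequality (cf.\ \cite[Lemma~2.4]{CDK}) to this boundary setting. The key observation is that the vanishing of $\vec u$ on $\cS_r^-(X)$ allows one to extend $\vec u$ by zero to get $\tilde{\vec u}\in\cW^{0,1}_2(Q_r^-(X))^m$ with $D\tilde{\vec u}=\chi_{\cU_r^-(X)}D\vec u$ almost everywhere, and, by a standard density argument based on the same vanishing condition, to admit test functions of the form $\eta(y)\zeta(t)$ with $\eta\in C^\infty_c(B_r(x_0))$ that need not vanish on $\partial\Omega$. Writing $X=(t_0,x_0)$, I will fix such a cutoff $\eta\ge 0$ with $\int\eta=1$, $\norm{\eta}_{L^\infty}\le Cr^{-n}$, and $\norm{D\eta}_{L^\infty}\le Cr^{-n-1}$, and work with the weighted spatial mean $M(t):=\int_{B_r(x_0)}\eta(y)\tilde{\vec u}(t,y)\,dy$.

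For the \emph{spatial} oscillation, Jensen's inequality gives $\abs{\tilde{\vec u}(t,y)-M(t)}^2\le\int\eta(z)\abs{\tilde{\vec u}(t,y)-\tilde{\vec u}(t,z)}^2\,dz$, and integrating in $y$ followed by the classical (no-boundary-condition) Poincar\'e inequality on $B_r(x_0)$ yields
\[
\int_{B_r(x_0)}\abs{\tilde{\vec u}(t,\cdot)-M(t)}^2\,dy\le Cr^2\int_{B_r(x_0)}\abs{D\tilde{\vec u}(t,\cdot)}^2\,dy,
\]
which integrates in $t$ to $\int_{Q_r^-(X)}\abs{\tilde{\vec u}-M}^2\le Cr^2\int_{\cU_r^-(X)}\abs{D\vec u}^2$. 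For the \emph{temporal} oscillation, given $-r^2<s<\tau<0$, I will test the weak equation against $\eta(y)\zeta_\epsilon(t)$ with $\zeta_\epsilon\in C^\infty_c((-r^2,0))$ smoothly approximating $\chi_{[s,\tau]}$, and pass to the limit to obtain
\[
M^i(\tau)-M^i(s) = -\int_s^\tau\!\!\int_{B_r(x_0)\cap\Omega} A^{\alpha\beta}_{ij}\,D_\beta u^j\,D_\alpha\eta\,dy\,dt + \int_s^\tau\!\!\int_{B_r(x_0)\cap\Omega} f^i\eta\,dy\,dt,
\]
whence Cauchy--Schwarz and the bounds on $\eta$ yield $\abs{M(\tau)-M(s)}^2\le Cr^{-n-2}\abs{\tau-s}\,\norm{D\vec u}_{L^2(\cU_r^-(X))}^2+Cr^{-2n}\norm{\vec f}_{L^1(\cU_r^-(X))}^2$.

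To combine the two steps, I will set $\bar M:=\fint_{-r^2}^0 M(t)\,dt$ and use that the mean minimizes $L^2$ deviation to write
\[
\int_{Q_r^-(X)}\abs{\tilde{\vec u}-(\tilde{\vec u})_{Q_r^-}}^2\le 2\int_{Q_r^-(X)}\abs{\tilde{\vec u}-M}^2+2\abs{B_r(x_0)}\int_{-r^2}^0\abs{M-\bar M}^2\,dt.
\]
The first term is bounded by Step~1. For the second, using the identity $\int_{-r^2}^0\abs{M-\bar M}^2\,dt = \tfrac{1}{2r^2}\int_{(-r^2,0)^2}\abs{M(\tau)-M(s)}^2\,d\tau\,ds$ together with $\int_{(-r^2,0)^2}\abs{\tau-s}\,d\tau\,ds\le Cr^6$, plugging in the Step~2 estimate and tracking powers of $r$ (with $\abs{B_r(x_0)}\simeq r^n$) gives exactly $Cr^2\norm{D\vec u}_{L^2(\cU_r^-(X))}^2+Cr^{2-n}\norm{\vec f}_{L^1(\cU_r^-(X))}^2$.

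The main obstacle is the rigorous justification in the temporal step: the test function $\eta(y)\zeta_\epsilon(t)$ is supported in $(-r^2,0)\times B_r(x_0)$ rather than in $\cU_r^-(X)$ itself, so it is not a priori admissible for the weak formulation of $\cL\vec u=\vec f$ posed on $\cU_r^-(X)$. This is precisely where the hypothesis that $\vec u$ vanishes on $\cS_r^-(X)$ enters, via the density of $C^\infty_c(\overline{Q_r^-(X)}\setminus\cS_r^-(X))^m$ in the natural test-function class, which enlarges the admissible test class to functions compactly supported in $Q_r^-(X)\cup\cS_r^-(X)$. Once this extension is in hand, the remaining computations are routine.
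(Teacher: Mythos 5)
Your proposal is correct and follows essentially the same route as the paper: both construct a weighted time-slice average of the zero-extension $\tilde{\vec u}$ via a smooth spatial cutoff, bound the spatial oscillation by the (boundary-condition-free) Poincar\'e inequality, bound the temporal oscillation by testing the weak formulation against $\eta(x)\chi_{[s,\tau]}(t)$ (which is admissible precisely because $\vec u$ vanishes on $\cS_r^-(X)$), and combine via the $L^2$-optimality of the box mean. The paper delegates the details of the temporal step to the proof of Lemma~4 in Struwe's article rather than writing them out, but the argument is the same; as a minor aside, the bound $\int_{Q_r^-}\abs{\tilde{\vec u}-\vec\beta(t)}^2\leq C\int_{\cU_r^-}\abs{D\vec u}^2$ displayed in the paper appears to be missing a factor $r^2$, and your version $Cr^2\int_{\cU_r^-}\abs{D\vec u}^2$ is the correct one required for the claimed scaling.
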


\begin{proof}
We modify the proof of \cite[Lemma~3]{Struwe}.
Without loss of generality, we may assume $X=0$.
Let $\zeta=\zeta(x)$ be a smooth function defined on $\bR^n$ such that
\begin{equation*}				
0\leq \zeta \leq 1,\quad \supp \zeta \subset B_r,\quad \zeta\equiv 1\,\text{ on }\, B_{r/2},\quad\text{and}\quad  \abs{D\zeta} \le 4/r.
\end{equation*}
Let $\delta^{-1}=\int_{B_r}\zeta \geq  cr^n$, where $c=c(n)=2^{-n}\abs{B_1}$.
We denote
\[
\vec\beta(t):= \delta \int_{B_r} \zeta(x) \tilde{\vec u}(t,x)\,dx=\delta \int_{\Omega_r}\zeta(x) \vec u(t,x)\,dx;\quad \bar{\vec \beta}:=r^{-2}\int_{-r^2}^0 \vec \beta(t)\,dt.
\]
By following the proof of \cite[Lemma~4]{Struwe}, we estimate, for $-R^2 <s<t<0$,
\begin{equation}				\label{eq8.8ap}
\abs{\vec \beta(t)-\vec\beta(s)}^2 \leq C r^{-n} \int_{\cU^-_r} \abs{D\vec u}^2+ Cr^{-2n}\norm{\vec f}^2_{L^1(\cU_r^-)}.
\end{equation}
Since $(\tilde{\vec u})_{Q_r^-}$ minimizes the integral $\int_{Q_r^-} \abs{\tilde{\vec u}-\vec c}^2$ among $\vec c\in \bR^m$, we obtain
\[
\int_{Q_r^-} \bigabs{\tilde{\vec u}-(\tilde{\vec u})_{Q_r^-}}^2  \leq \int_{Q_r^-} \bigabs{\tilde{\vec u}-\bar{\vec \beta}}^2 \leq 2 \int_{Q_r^-} \bigabs{\tilde{\vec u}-\vec\beta(t)}^2+ 2 \int_{Q_r^-} \bigabs{\vec \beta(t)-\bar{\vec \beta}}^2.
\]

Notice that $\tilde{\vec u}\in W^{0,1}_2(Q_r^-)$ and $D \tilde{\vec u}= \chi_{\cU_r^-}\, D \vec u$ in $Q_r^-$.
Therefore, by a variant of Poincar\'e's inequality, we have
\[
 \int_{Q_r^-}\bigabs{\tilde{\vec u}-\vec\beta(t)}^2 =\int_{-r^2}^0 \int_{B_r} \bigabs{\tilde{\vec u}(t,x)-\vec\beta(t)}^2\,dx\,dt \leq C \int_{\cU_r^-} \abs{D\vec u}^2.
 \]
On the other hand, by the definition of $\bar{\vec \beta}$ and \eqref{eq8.8ap}, we obtain
\begin{align*}
\int_{Q_r^-} \bigabs{\vec \beta(t)-\bar{\vec \beta}}^2 & =  \abs{B_r} \int_{-r^2}^0 \bigabs{\vec \beta(t)-\bar{\vec \beta}}^2  \leq  Cr^{n-2} \int_{-r^2}^0 \int_{-r^2}^0 \bigabs{\vec\beta(t)-\vec\beta(s)}^2\,ds\,dt\\
&\leq C r^2 \int_{\cU^-_r} \abs{D\vec u}^2+ Cr^{2-n}\norm{\vec f}^2_{L^1(\cU_r^-)}.
\end{align*}
By combining the above inequalities, we obtain \eqref{eq8.6ap}. The proof is complete.
\end{proof}

\begin{lemma} \label{lem:G-06}
Assume $a^{\alpha\beta}(X)$ satisfy the condition \eqref{eqP-07}.
Let $\cU=\bR\times\Omega$, where $\Omega$ satisfies the condition $\CS$, and define $\mathscr{E}$ as in \eqref{eqP-08w}, where $A^{\alpha\beta}_{ij}(X)$ are the coefficients of the operator $\cL$.
There exists $\mathscr{E}_0= \mathscr{E}_0(n, \nu_0)>0$ such that if $\mathscr{E} <\mathscr{E}_0$, then the condition $\LH$ is satisfied with $\mu_0=\mu_0(n, \nu_0, \theta)$, $R_{max}=R_a$, and $N_1=N_1(n,m,\nu_0,\theta)$.
\end{lemma}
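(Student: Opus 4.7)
My plan is to verify condition $\LHP$, from which Lemma~\ref{lem8.1} delivers $\LH$. I would focus on part i) concerning $\cL$; part ii) for $\cLt$ follows by the same reasoning, since the quantity $\mathscr{E}$ in \eqref{eqP-08w} is symmetric under the transposition $A^{\alpha\beta}_{ij}\mapsto A^{\beta\alpha}_{ji}$, $a^{\alpha\beta}\mapsto a^{\beta\alpha}$, and hence $\cLt$ is itself a small $L^\infty$-perturbation of a diagonal scalar operator.

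\textbf{Step 1 (the diagonal case).} I would first establish $\LHP$ for the unperturbed diagonal operator $\cL_0\vec u := \vec u_t - D_\alpha(a^{\alpha\beta}D_\beta\vec u)$ on $\cU=\bR\times\Omega$ with $R_{max}=R_a$. Each component of a $\cL_0$-solution solves a scalar parabolic equation in divergence form with bounded measurable coefficients; since $\Omega$ satisfies $\CS$ with parameters $(\theta,R_a)$, the classical De Giorgi--Nash--Moser boundary theory (cf.~\cite[Theorem~6.32]{Lieberman}) supplies a boundary H\"older estimate with exponent $\mu_0=\mu_0(n,\nu_0,\theta)$ for solutions $\vec w$ of $\cL_0\vec w=0$ in $\cU_R^-(X)$ vanishing on $\cS_R^-(X)$. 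Converting this estimate into gradient excess-decay form via the interior Caccioppoli inequality and the Poincar\'e-type bound of Lemma~\ref{lem8.1ap} should yield
\[
\int_{\cU_\rho^-(X)}|D\vec w|^2 \le C_0\Bigl(\frac{\rho}{r}\Bigr)^{n+2\mu_0}\int_{\cU_r^-(X)}|D\vec w|^2,\quad 0<\rho<r<R_a,
\]
with $C_0=C_0(n,m,\nu_0,\theta)$; that is, condition $\LHP$ for $\cL_0$.

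\textbf{Step 2 (freezing perturbation).} Let $\vec u$ solve $\cL\vec u=0$ in $\cU_r^-(X)$ vanishing on $\cS_r^-(X)$. The idea is to compare $\vec u$ with the $\cL_0$-harmonic replacement $\vec w$: the unique weak solution of $\cL_0\vec w=0$ in $\cU_r^-(X)$ with $\vec w-\vec u\in\rV^{0,1}_2(\cU_r^-(X))^m$. Then $\vec w=\vec u$ on $\partial_p\cU_r^-(X)\setminus\cS_r^-(X)$ and $\vec w=0$ on $\cS_r^-(X)$, and the difference $\vec v := \vec u-\vec w\in\rV^{0,1}_2(\cU_r^-(X))^m$ satisfies $\cL_0\vec v = D_\alpha\vec F^\alpha$ with $F^{\alpha,i} = (a^{\alpha\beta}\delta_{ij}-A^{\alpha\beta}_{ij})D_\beta u^j$. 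Testing with $\vec v$ itself yields the energy bound
\[
\int_{\cU_r^-(X)}|D\vec v|^2 \le C(\nu_0)\,\mathscr{E}^2\int_{\cU_r^-(X)}|D\vec u|^2.
\]
Combining this with the excess-decay from Step~1 applied to $\vec w$ and the inequality $|D\vec u|^2 \le 2(|D\vec w|^2 + |D\vec v|^2)$ should yield, for all $0<\rho<r<R_a$,
\[
\int_{\cU_\rho^-(X)}|D\vec u|^2 \le C\Bigl[\Bigl(\frac{\rho}{r}\Bigr)^{n+2\mu_0}+\mathscr{E}^2\Bigr]\int_{\cU_r^-(X)}|D\vec u|^2.
\]
A standard iteration (e.g.~\cite[Lemma~III.2.1]{Gi93}), after fixing any target exponent $\mu_0'<\mu_0$, then produces $\mathscr{E}_0=\mathscr{E}_0(n,\nu_0,\theta)$ such that if $\mathscr{E}<\mathscr{E}_0$ the full excess-decay $\LHP$ holds for $\cL$ with exponent $\mu_0'$ and constant $C=C(n,m,\nu_0,\theta)$, whence Lemma~\ref{lem8.1} delivers $\LH$ with $N_1=N_1(n,m,\nu_0,\theta)$.

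The hard part should be Step~1: the scalar boundary H\"older estimate up to $\CS$-boundaries with constants depending only on $n,\nu_0,\theta$, which is the classical De Giorgi--Nash--Moser theory adapted to the measure-density condition. Once this is in hand, the translation into the gradient form $\LHP$ is a routine Caccioppoli plus Campanato computation, and the perturbation step is a textbook Giaquinta-style argument.
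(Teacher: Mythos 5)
Your strategy is correct and is, at heart, the same comparison-plus-iteration argument the paper uses: replace the solution by its harmonic lift with respect to the diagonal scalar operator $\cL_0$, use the De~Giorgi--Nash--Moser scalar theory for $\cL_0$, and absorb the perturbation $\mathscr{E}^2\int|D\vec u|^2$ by a standard iteration lemma. There is, however, a genuine difference in organization that is worth noting, because it affects where the nontrivial technicalities land.

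The paper does \emph{not} prove $\LHP$ for $\cL_0$ at all points as your Step~1 proposes. It proves the boundary excess decay \eqref{eqG-65} (the ``parabolic (BH)'') only at boundary points $\tilde X\in\partial_x\cU$, where the crucial fact $v^i(\tilde X)=0$ makes the passage from the oscillation estimate \eqref{eqG-67} to the gradient decay clean (no mean subtraction is needed). It then separately invokes the interior property $\PH$ for the perturbed operator $\cL$ (via \cite[Lemma~2.2]{CDK}), and finally glues the boundary and interior decays together via the ``standard method in boundary regularity theory''. Your route reverses this: you first assert $\LHP$ for $\cL_0$ at \emph{all} $X\in\cU$ and only then perturb. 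This is a legitimate reorganization, and arguably cleaner in that the combination argument is carried out for the simpler scalar operator; but it means the ``hard part'' you defer in Step~1 is not just the boundary H\"older estimate from Lieberman---it is precisely the boundary-plus-interior combination argument, including the case $d_X>0$, $\rho<d_X$ where one must use the interior $\PH$ on $Q^-_{d_X}(X)$ and chain it to the boundary decay centered at a nearby $\tilde X\in\partial\Omega$. Your invocation of ``Caccioppoli and Lemma~\ref{lem8.1ap}'' is the right toolkit, but note that Lemma~\ref{lem8.1ap} is a Poincar\'e estimate used to pass \emph{from} gradients \emph{to} oscillations (the $\LHP\Rightarrow\LH$ direction of Lemma~\ref{lem8.1}); going the other way, as you need in Step~1, is the boundary analogue of the implication $\IH\Rightarrow\PH$ in Lemma~\ref{lem8.0a}, and requires the mean-subtraction/no-mean-subtraction case split. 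This is doable, and the paper's citation of \cite[\S3.4]{Gi93} is for exactly this, but as written your sketch conflates the two directions.

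Two smaller remarks. First, your observation that $\mathscr{E}$ is invariant under $A^{\alpha\beta}_{ij}\mapsto A^{\beta\alpha}_{ji}$, so that $\cLt$ is likewise a small perturbation of a diagonal operator, is correct and gives part~ii) for free; the paper simply says the adjoint case is ``almost identical'', so this is consistent. Second, your final constant dependence $\mathscr{E}_0=\mathscr{E}_0(n,\nu_0,\theta)$ is in fact more careful than the paper's stated $\mathscr{E}_0(n,\nu_0)$, since the iteration threshold inherits the $\theta$-dependence through the decay exponent and constant in the scalar boundary estimate; Corollary~\ref{cor2b} in the paper does record the $\theta$-dependence, so your tracking is the correct one.
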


\begin{proof}
In this proof, we shall only consider part i) in the condition $\LH$ because proof of part ii) in the condition $\LH$ will be almost identical.

We shall prove below that there is  a positive number $\mathscr{E}_1=\mathscr{E}_1(n,\nu_0)$ such that if $\mathscr{E}<\mathscr{E}_1$, then the following holds:
There exist positive constants $\mu_1=\mu_1(n,\nu_0,\theta)$ and $C_1=C_1(n,m,\nu_0,\theta)$ such that for any $\tilde{X}\in \partial_x\cU=\bR\times\partial\Omega$ and $R\in (0,R_a)$, if $\vec u$ is a weak solution of $\cL \vec u=0$ in $\cU_R^-(\tilde X)$ vanishing on $\cS_R^-(\tilde X)$, then we have
\begin{equation}							\label{eqG-65}
\int_{\cU_\rho^-(\tilde{X})}\abs{D \vec u}^2 \le C_1 \left(\frac{\rho}{r}\right)^{n+2\mu_1} \int_{\cU_r^-(\tilde{X})}\abs{D \vec u}^2,\quad \forall 0<\rho<r \leq R.
\end{equation}
We remark that the above condition is a parabolic analogue of the property (BH) in \cite{HK07}.

We also note that by \cite[Lemma~2.2]{CDK}, there is $\mathscr{E}_2=\mathscr{E}_2(n,\nu_0)>0$ such that if $\mathscr{E}<\mathscr{E}_2$, then the property $\PH$ in \cite{CDK} is satisfied by $\cL$ with the exponent $\mu_2=\mu_2(n,\nu_0)$ and $R_c=\infty$.
More precisely, if $\vec u$ is a weak solution of $\cL \vec u=0$ in $Q_R^-(X)\subset \cU$, then we have
\begin{equation}							\label{eqG-65a}
\int_{Q_\rho^-(X)} \abs{D\vec u}^2\leq C_2\left(\frac{\rho}{r}\right)^{n+2\mu_2}\int_{Q_r^-(X)}\abs{D\vec u}^2, \quad \forall 0<\rho<r\leq R,
\end{equation}
where $C_2=C_2(n,m, \nu_0)$.
Then we combine \eqref{eqG-65} and \eqref{eqG-65a}, via a standard method in boundary regularity theory (see e.g., \cite[\S 3.4]{Gi93}) and Lemma~\ref{lem8.1}, to conclude that if $\mathscr{E}<\mathscr{E}_1 \wedge \mathscr{E}_2=:\mathscr{E}_0$, then the condition $\LH$ is satisfied with parameters $\mu_0=\mu_1\wedge \mu_2$, $N_1=N_1(n,m,\nu_0,\theta)$, and $R_{max}=R_a$; see Remark~\ref{rmk3.5}.

By the above observation, it only remains for us to prove the estimate \eqref{eqG-65}.
For $\tilde{X}\in \partial_p\cU$ and $R\in (0,R_a)$ given, let $\vec u$ be a weak solution of $\cL \vec u =0$ in $\cU_R^-(\tilde{X})$ vanishing on $\cS_R^-(\tilde{X})$.
Denote by $\cL_0$ the parabolic operator acting on scalar functions $v$ as follows:
\[
\cL_0 v = v_t - D_\alpha(a^{\alpha\beta}D_\beta v).
\]
For $r \in (0, R]$, let $v^i$ be a unique weak solution in $\cV_2(\cU_r^-(\tilde{X}))$ of the problem
\[
\left\{\begin{array}{rcl}
\cL_0 v^i=0&\text{in}&\cU_r^-(\tilde{X});\\
v^i= u^i&\text {on }&\partial_p \cU_r^-(\tilde{X}),
\end{array}\right.
\]
where $i=1,\ldots,m$.
We claim that there are positive constants $\mu=\mu(n,\nu_0,\theta)$ and $C=C(n,m,\nu_0,\theta)$ such that the following estimate holds:
\begin{equation}							\label{eqG-66}
\int_{\cU_\rho^-(\tilde{X})}\abs{D\vec v}^2 \leq C\left(\frac{\rho}{r}\right)^{n+2\mu}\int_{\cU_r^-(\tilde{X})}\abs{D\vec v}^2,\quad \forall 0<\rho<r.
\end{equation}

Notice that we may assume that $\rho<r/8$ because otherwise \eqref{eqG-66} becomes trivial.
Since each $v^i$ vanishes on $\cS_r^-(\tilde{X})$, it follows from \cite[Theorem~6.32]{Lieberman} and \cite[Theorem~6.30]{Lieberman} that there exist $\mu=\mu(n,\nu_0,\theta)>0$ and $C=C(n, \nu_0,\theta)>0$ such that
\begin{equation}							\label{eqG-67}
\osc_{\cU_{2\rho}^-(\tilde{X})} v^i \le C \rho^{\mu} r^{-\mu}\sup_{\cU_{r/4}^-(\tilde{X})} \bigabs{v^i} \leq C \rho^{\mu} r^{-\mu-n/2-1}\bignorm{v^i}_{L^2(\cU_{r/2}^-(\tilde{X}))}.
\end{equation}
In particular, the estimate \eqref{eqG-67} implies $v^i(\tilde{X})=0$.
Then, by the energy inequality and \cite[Lemma~4.2]{HK07},  we obtain (recall that $\rho<r/8$)
\begin{align*}
\int_{\cU_\rho^-(\tilde{X})}\bigabs{D v^i}^2 &\leq C \rho^{-2}\int_{\cU_{2\rho}^-(\tilde{X})}\bigabs{v^i}^2= C \rho^{-2}\int_{\cU_{2\rho}^-(\tilde{X})}\bigabs{v^i(Y)-v^i(\tilde{X})}^2\,dY \\
&\leq C \rho^{n}\left(\osc_{\cU_{2\rho}^-(\tilde{X})}\,v^i\right)^2 \leq C\left(\frac{\rho}{r}\right)^{n+2\mu} r^{-2}\int_{\cU_{r/2}^-(\tilde{X})}\bigabs{v^i}^2\\
&\le C\left(\frac{\rho}{r}\right)^{n+2\mu}\int_{\cU_{r}^-(\tilde{X})}\bigabs{D v^i}^2,\qquad i=1,\ldots,m.
\end{align*}
where $C=C(n,\nu_0,\theta)$.
This completes the proof of the estimate \eqref{eqG-66}.

Next, notice that $\vec w:=\vec u-\vec v$ belongs to $\cV_2(\cU_r^-(\tilde{X}))$, vanishes on $\partial_p \cU_r^-(\tilde{X})$, and satisfies
\[
\cL_0 \vec w = D_\alpha\left(\bigl(\vec A^{\alpha\beta}-a^{\alpha\beta}I_m \bigr)D_\beta\vec u\right).
\]
Therefore, by the energy inequality we obtain
\begin{equation}							\label{eqG-70w}
\int_{\cU_r^-(\tilde{X})}\abs{D\vec w}^2\leq C \mathscr{E}^2 \int_{\cU_{r}^-(\tilde{X})}\abs{D\vec u}^2,
\end{equation}
where $\mathscr{E}$ is defined as in \eqref{eqP-08w}.
By combining \eqref{eqG-66} and \eqref{eqG-70w}, we obtain
\[
\int_{\cU_\rho^-(\tilde{X})}\abs{D\vec u}^2 \leq C\left(\frac{\rho}{r}\right)^{n+2\mu} \int_{\cU_r^-(\tilde{X})}\abs{D\vec u}^2+C\mathscr{E}^2 \int_{\cU_r^-(\tilde{X})} \abs{D\vec u}^2, \quad \forall 0<\rho<r.
\]
Now, choose a number $\mu_1\in (0,\mu)$.
Then, by a well known iteration argument (see, e.g., \cite[\S III.2]{Gi83}), we find that there exists $\mathscr{E}_1$ such that if $\mathscr{E}< \mathscr{E}_1$, then we have the estimate \eqref{eqG-65}. The lemma is proved.
\end{proof}


\end{document}